  \newskip\prethm \prethm3.0pt plus1.3pt minus.4pt
  \newskip\posthm \posthm2.7pt plus1.4pt minus.3pt
  \newtheoremstyle{STATEMENT}%
       {\prethm}{\posthm}{\itshape}{\parindent}{\scshape}
       {.}{.6em plus.2em minus.1em}{}
  \newtheoremstyle{EXPLANATION}%
       {\prethm}{\posthm}{}{\parindent}{\scshape}
       {.}{.6em plus.2em minus.1em}{}
\theoremstyle{STATEMENT}
\newtheorem {theorem}{Theorem}
\newtheorem {lemma}{Lemma}
\newtheorem    {assertion} {Assertion}
\theoremstyle{EXPLANATION}
\newtheorem {definition} {Definition}
\newtheorem   {remark}{Remark}
\begin{document}

\title{The topology of Liouville foliation for the Kovalevskaya integrable case on the
Lie algebra so(4).}

\author{Kozlov~I.\,K.\thanks{No Affiliation, Moscow, Russia.} \\  ikozlov90@gmail.com}
\date{}

\maketitle

\begin{abstract}
In this paper we study topological properties of an integrable case
for Euler's equations on the Lie algebra $\textrm{so}(4)$, which can
be regarded as an analogue of the classical Kovalevskaya case in
rigid body dynamics. In particular, for all values of the parameters
of the system under consideration the bifurcation diagrams of the
momentum mapping are constructed, the types of critical points of
rank $0$ are determined, the bifurcations of Liouville tori are
described and the loop molecules for all singular points of the
bifurcation diagram are computed. It follows from the obtained
results that some topological properties of the classical
Kovalevskaya case can be obtained from the corresponding properties
of the considered integrable case on the Lie algebra
$\textrm{so}(4)$ by taking a natural limit.

Bibliography: 21 titles.

\end{abstract}

\begin{flushleft}
\textbf{Keywords:}  integrable Hamiltonian systems, Kovalevskaya case, Liouville foliation,
bifurcation diagram, topological invariants.
\end{flushleft}

\footnotetext[0]{This work was supported by the Russian Foundation for Basic Research (grant nos.
13-01-00664-a and 12-01-31497), the programme “Leading Scientific Schools” (grant no.
НШ-581.2014.1) and a grant from the Government of the Russian Federation for the State Support
of Research at Russian Institutes of Higher Education Supervised by Leading Scientists (contract
no. 11.G34.31.0054).}

\tableofcontents

\section{Introduction} \label{s1}

The Kovalevskaya top is one of the most well-known integrable
Hamiltonian systems in classical mechanics. It was proved by Sofia
Kovalevskaya in her paper \cite{Kowalewski1889Acta14} that besides
the cases of Euler, Lagrange and her own, opened earlier in
\cite{Kowalewski1889Acta12}, there is no other rigid body systems
that would be integrable in the same way for any value of the area
constant. The Kovalevskaya top is more complex than the Euler and
Lagrange tops hence various methods that would allow us to simplify
the work with this top are of interest. In this paper we demonstrate
one of such possible methods. Namely, we consider a one-parameter
family of integrable Hamiltonian systems defined on the pencil of
Lie algebras $\textrm{so}(4) - \textrm{e}(3) - \textrm{so}(3, 1)$
found in the paper \cite{Komarov81} and show that some information
about the classical Kovalevskaya case, which is an integrable
Hamiltonian system on the Lie algebra $\textrm{e}(3)$, can be
obtained by studying the integrable Hamiltonian systems on the Lie
algebra $\textrm{so}(4)$. To be more precise, in this paper we
calculate some topological invariants of these integrable cases
using the theory of topological classification of integrable
Hamiltonian systems (see \cite{BolsinovFomenko99}). The obtained
results for the Lie algebra $\textrm{so}(4)$ allow us to make some
conclusions about the topological properties of the classical
Kovalevskaya case.

In this paper for the integrable cases on the algebra
$\textrm{so}(4)$  under consideration it is done the following:
\begin{enumerate}

\item the bifurcation diagrams of the
momentum mapping are constructed (Theorem ~\ref{T:Bif_Diag}),

\item the types of critical
points of rank $0$ are determined (Lemma~\ref{L:RankZeroType}),

\item the bifurcations of Liouville tori are described (Theorem~\ref{T:Bifurcations})
and the loop molecules for all singular points of the bifurcation
diagram are computed (Theorem~\ref{T:Molecules}).

\end{enumerate}

Using the results of this paper it is not hard to obtain the
classification of isoenergetic surfaces up to the rough Liouville
equivalence. In other words, for each isoenergetic surface
$H=\textrm{const}$ it is not hard to construct the corresponding
unmarked molecule (the Fomenko invariant). We do not do it in this
paper because the description of all possible molecules would be
rather cumbersome. The molecules depend not only on the type of
bifurcation diagrams, but also on the value of the Hamiltonian at
the critical points of rank $0$. The knowledge of loop molecules
also allows us to easily restore a number of marks for these rough
molecules. Recall that the marked molecules (the Fomenko--Zieschang
invariants) are important topological invariants of integrable
Hamiltonian systems, which completely describe the structure of the
Liouville foliation on non-singular three-dimensional isoenergetic
surfaces. Namely, the Fomenko--Zieschang theorem states that two
integrable Hamiltonian systems on two isoenergetic surfaces are
Liouville equivalent (that is, there exists a diffeomorphism taking
one Liouville foliation to another) if and only if their marked
molecules coincide (for more details about the Fomenko--Zieschang
invariants see, for example, \cite{BolsinovFomenko99}).

The idea to consider integrable Hamiltonian systems on compact Lie
algebras can become fruitful: the coadjoint orbits of compact Lie
algebras are compact which greatly simplifies the analysis of
integrable Hamiltonian systems on them. For example, in the case
under consideration, since the orbits are compact, for the
construction of bifurcation diagrams and the calculation of loop
molecules it suffices to find the curves that contain the image of
critical points and find the types of critical points of rank $0$.
Earlier integrable Hamiltonian systems on the Lie algebra
$\textrm{so}(4)$ were studied in the papers \cite{Oshemkov87}
(compact analogue of the Clebsch case), \cite{Oshemkov09} (the
Sokolov case) and \cite{Chorshidi06} (compact analogue of the
Steklov case). Let us also note that algebraic and topological
properties of integrable systems related to the Lie algebra
$\textrm{so}(4)$ and other compact Lie algebras were studied in
\cite{Mischenko78} and \cite{Fomenko88}.

The topology of the classical Kovalevskaya case was studied in
detail in the book \cite{Harlamov88} (see also \cite{Harlamov83App}
and \cite{Harlamov83Dan}). In particular, in this book the
bifurcation diagrams of the momentum mapping and the bifurcations of
Liouville tori for the critical values of the momentum mapping are
described. The loop molecules (as well as fine Liouville
classification) for the classical case Kovalevskaya are contained in
the paper \cite{BolsinovFomenkoRichter00}. All necessary results on
the classical Kovalevskaya case are collected in a convenient for us
form in the book \cite{BolsinovFomenko99}. The connection between
the classical Kovalevskaya case and the considered in this paper
system on the Lie algebra $\textrm{so}(4)$ is discussed in
Section~\ref{S:Classival_Kovalevskaya}.

The case of the zero area constant ($b=0$) was described earlier in
the paper \cite{Kozlov12}.

\section{Basic Definitions and Problem Formulation} \label{s2}

Recall that there exists a natural linear Poisson bracket on the
dual space $\mathfrak{g}^*$ to any finite-dimensional Lie algebra
$\mathfrak{g}$ given by the formula
\begin{equation} \label{Eq:Lie_Poisson_Bracket} \{ f, g\} = \langle
x, [df|_x, \, dg|_x ]\rangle.
\end{equation} Here $\langle \cdot  , \cdot \rangle $ denotes the
value of the covector in $\mathfrak{g}^*$ on a vector in
$\mathfrak{g}$, and $[ \cdot, \cdot ]$  denotes the commutator in
the Lie algebra $\mathfrak{g}$. In the formula
\eqref{Eq:Lie_Poisson_Bracket} we use the canonical isomorphism
$(\mathfrak{g}^*)^* = \mathfrak{g}$. The bracket
\eqref{Eq:Lie_Poisson_Bracket} is called the Lie-Poisson bracket.

\begin{definition} Let $\mathfrak{g}$ be a finite-dimensional Lie algebra, $x_1, \dots,
x_n$ be linear coordinates in the dual space $\mathfrak{g}^*$, and
$H$ be a smooth function on $\mathfrak{g}^*$. The equations
\begin{equation} \dot{x}_i = \{ x_i, H \},\end{equation} which define a dynamical system on $\mathfrak{g}^*$, are called {\it
Euler's equations} for the Lie algebra $\mathfrak{g}$.
\end{definition}

It is well-known (see, for example, \cite{BolsinovFomenko99}), that
the classical Kovalevskaya case can be defined by Euler's equations
on the Lie algebra $\textrm{e}(3)$. It turns out that the classical
Kovalevskaya case can be included in a one-parameter family of
integrable systems defined on the pencil of Lie algebras
$\textrm{so}(4) - \textrm{e}(3) - \textrm{so}(3, 1)$.

Consider the six-dimensional space $\mathbb{R}^6$ and fix a basis
$e_1, e_2, e_3, f_1, f_2, f_3$ in it. Consider the following
one-parameter family of commutators in $\mathbb{R}^6$ depending on
the parameter $\varkappa \in \mathbb{R}$:
\begin{equation}[e_i, e_j] = \varepsilon_{ijk}e_k, \quad [e_i, f_j] =
\varepsilon_{ijk}f_k, \quad [f_i, f_j] = \varkappa
\varepsilon_{ijk}e_k,
\end{equation} where $\varepsilon_{ijk}$ is the sign of the permutation
$\{123\} \rightarrow \{ijk\}$. It is easy to check that the cases
$\varkappa>0$, $\varkappa=0$ and $\varkappa<0$ correspond to the Lie
algebras $\textrm{so}(4)$, $\textrm{e}(3)$ and $\textrm{so}(3, 1)$
respectively.

In the coordinates $J_1, J_2, J_3, x_1, x_2, x_3$ on the dual linear
space corresponding to the basis $e_1, e_2, e_3, f_1, f_2, f_3$ the
Lie--Poisson bracket has a similar form
\begin{equation} \label{Eq:SO4_Poisson_Lie_bracket} \{J_i, J_j\} =
\varepsilon_{ijk}J_k, \quad \{J_i, x_j\} = \varepsilon_{ijk}x_k,
\quad \{x_i, x_j\} = \varkappa \varepsilon_{ijk}J_k. \end{equation}
For any value of the parameter $\varkappa \in \mathbb{R}$ the
bracket \eqref{Eq:SO4_Poisson_Lie_bracket} has two Casimir
functions:
\begin{equation}\label{Eq:Kasimirs}  f_1 = \textbf{x}^2 + \varkappa
\textbf{J}^2, \qquad  f_2 = \langle \textbf{x}, \textbf{J} \rangle,
\end{equation} where $\textbf{x}$ and $\textbf{J}$ denote the three-dimensional
vectors $(x_1, x_2, x_3)$ and $(J_1, J_2, J_3)$ respectively and
$\langle \cdot, \cdot \rangle$ denotes the Euclidean scalar product
of two vectors in $\mathbb{R}^3$. Recall that functions are called
Casimir functions of a Poisson bracket if they commute with any
other function with respect to this bracket. The joint level
surfaces \begin{equation} M_{a, b} = \{ (\textbf{J}, \textbf{x})|
\quad  f_1 (\textbf{J}, \textbf{x}) = a, \quad f_2 (\textbf{J},
\textbf{x}) = b \}
\end{equation} are the orbits of the coadjoint representation except
for the case $\varkappa \leq 0, a=0, b=0$ (in this case, the level
surface is a union of several orbits of the coadjoint
representation). In all other cases the surfaces $M_{a, b}$ are
symplectic leaves of the bracket \eqref{Eq:SO4_Poisson_Lie_bracket},
in particular, the bracket \eqref{Eq:SO4_Poisson_Lie_bracket}
defines a symplectic structure on them. If $\varkappa>0$ and $a> 2
\sqrt{\varkappa} |b|$, then the orbits $M_{a, b}$ are
four-dimensional submanifolds of $\mathbb{R}^6(\textbf{J},
\textbf{x})$  diffeomorphic to the product of two two-dimensional
spheres $\mathbb{S}^2 \times \mathbb{S}^2$. If $\varkappa>0$, then
the singular orbits $a = 2 \sqrt{\varkappa} |b|$ are diffeomorphic
to the two-dimensional sphere $\mathbb{S}^2$. If $\varkappa > 0$,
then there is no orbits satisfying the condition $a < 2
\sqrt{\varkappa} |b|$. Let us also note that if $\varkappa =0 $,
then the non-singular orbits $a> 0$ are diffeomorphic to the
cotangent bundle of the two-dimensional sphere $T^* \mathbb{S}^2$
(in particular, they are not compact).

In this paper we examine the following integrable case for Euler's
equations defined on the pencil of Lie algebras
$\textrm{so}(4)-\textrm{e}(3)-\textrm{so}(3, 1)$ described above
(see, for example, \cite{Komarov81} or \cite{Borisov03}). In the
coordinates $(J_i, x_j)$ described above the Hamiltonian is equal to
\begin{equation}\label{Eq:Hamiltonian} H = J_1^2 + J_2^2 + 2J_3^2 + 2 c_1 x_1\end{equation} and the integral
has the form \begin{equation}\label{Eq:First_Integral} K = (J_1^2 -
J_2^2-2c_1 x_1 + \varkappa c_1^2)^2 + (2J_1J_2 - 2 c_1 x_2)^2,
\end{equation} where $c_1$ is an arbitrary constant.

Note that without loss of generality we may assume that $c_1=1$ and
$\varkappa =-1, 0$ or $1$ since the change of coordinates and
parameters of the system given by the formulas
\[J' = \mu J, \quad x' = \lambda \mu x, \quad a' = \mu^2 \lambda^2
a, \quad b' = \mu^2 \lambda b, \quad c_1' = \frac{\mu}{\lambda} c_1,
\quad \varkappa' = \lambda^2 \varkappa,
\] where $\lambda, \mu$ are arbitrary constants, multiplies the Hamiltonian
and the integral by $\mu^2$ and $\mu^4$ respectively. Nevertheless
we will preserve both $c_1$ and $\varkappa$ in order to get
``homogeneous'' formulas (for example, the Hamiltonian and the
integral are homogeneous functions of $\textbf{J}, \textbf{x},
c_1$).

\begin{remark}
It is not hard to verify that if $\varkappa = 0$ (and $c_1 =1$),
then we obtain the classical Kovalevskaya case in the form in which
it was described, for example, in the book \cite{BolsinovFomenko99}.
More precisely, in the book \cite{BolsinovFomenko99} the Hamiltonian
is $2$ times less than the Hamiltonian \eqref{Eq:Hamiltonian} and
the first integral is $4$ times less than the integral
\eqref{Eq:First_Integral}. Note also that in the book
\cite{BolsinovFomenko99} the following notation is used: $S_i = J_i,
R_j = x_j$, the value $b$ of the integral $f_2$ is denoted by $g$
and the value $a$ of the integral $f_1$ is assumed to be equal to
$1$.
\end{remark}

\begin{remark} \label{R:minusB} The change of coordinates $(\textbf{J}, \textbf{x}) \to (-\textbf{J},
\textbf{x})$ preserves the integral $f_1$, the Hamiltonian
\eqref{Eq:Hamiltonian} and the integral \eqref{Eq:First_Integral}
whereas it changes the sign of the integral $f_2$. Therefore,
without loss of generality, we can assume that $b \geq 0$.
\end{remark}

\begin{remark} Note also that the system has the following two natural symmetries
that preserve the Hamiltonian \eqref{Eq:Hamiltonian}, the first
integral \eqref{Eq:First_Integral} and both integral $f_1$ and
$f_2$. The first symmetry $\sigma_2$ changes the signs of the
coordinates $J_2$ and $x_2$ and preserves the remaining coordinates:
\[ \sigma_2: (J_1, J_2, J_3, x_1, x_2, x_3) \to (J_1, -J_2, J_3,
x_1, -x_2, x_3).\] Similarly, the second symmetry $\sigma_3$
simultaneously changes the signs of the coordinates $J_3$ and
$x_3$:\[ \sigma_3: (J_1, J_2, J_3, x_1, x_2, x_3) \to (J_1, J_2,
-J_3, x_1, x_2, -x_3).\] \end{remark}

Further we construct the bifurcation diagrams of the momentum
mapping, determine the types of critical points, describe the
bifurcations of Liouville tori and calculate the loop molecules for
singular points of the bifurcation diagrams for the given
Hamiltonian systems with Hamiltonian \eqref{Eq:Hamiltonian} and
integral \eqref{Eq:First_Integral} on non-singular orbits $M_{a,
b}$. We use some facts and notation from the theory of topological
classification developed by A.\,T.~Fomenko and his disciples. The
definitions of topological invariants (atoms, molecules) as well as
the basic facts about this theory can be found in the book
\cite{BolsinovFomenko99}. For various applications of this theory in
rigid body dynamics see, for example, \cite{Fomenko96}. Let us also
note that this theory has recently played an important role in the
study of symmetries of Liouville tori bifurcations and in the
construction of a classification theory for such symmetries (see
\cite{Kudryavtseva08}, \cite{Fomenko12}, \cite{Kudryavtseva12} and
\cite{Kudryavtseva13}).

In this paper we only briefly recall the idea of the method of loop
molecules. A smooth curve without self-intersections in the plane
$\mathbb{R}^2(H, K)$ is called admissible if it intersects the
bifurcation diagram transversally and does not pass through its
singular points. The preimage of any admissible curve is a
three-dimensional manifold equipped with a Liouville foliation. An
invariant of this foliation is a marked molecule, which is a graph
whose edges correspond to the one-parameter families of Liouville
tori and whose vertices correspond to the singular leaves of this
foliation. Symbols are placed in the vertices of the graph which
specify the types of the bifurcations. In addition the graph is
endowed, in a certain way, with three types of marks ($r$,
$\epsilon$ and $n$), which indicate the relation between different
bifurcations.  The loop molecule of a singular point $x$ of a
bifurcation diagram is the marked molecule that describes the
Liouville foliation in the preimage of any sufficiently small
admissible closed curve surrounding the point $x$. Loop molecules
can provide information about different molecules of admissible
curves (for example, sometimes molecules of curves can be ``glued''
together from parts of loop molecules). Examples of loop molecules
are given in Tables \ref{Tab:Cirle_Mol_Old} and
\ref{Tab:Cirle_Mol_New}.

\section{Main results} \label{s3}

We now state the main results. First, we describe the results for
the case $b \ne 0$ and then for the case $b=0$. Let us start with
the description of the bifurcation diagrams of the momentum mapping.

\begin{lemma} \label{L:Curve_Images_B_not_0_Kappa_not_0}
Let $b \ne 0$ and $\varkappa \ne 0$. Then for any non-singular orbit
$M_{a, b}$ (that is, for any orbit such that $a^2 - 4 \varkappa b^2
\ne 0$) the bifurcation diagram $\Sigma_{h, k}$ for the integrable
Hamiltonian system with Hamiltonian \eqref{Eq:Hamiltonian} and
integral \eqref{Eq:First_Integral} is contained in the union of the
following three families of curves on the plane
$\mathbb{R}^{2}(h,k)$:
\begin{enumerate}
\item The line $k=0$; \item The parametric curve
\begin{equation} \label{Eq:Parametric Curve} h(z)= \frac{b^2 c_1^2}{z^2}+2 z, \quad k(z)= \left(4 a
c_1^2-\frac{4 b^2 c_1^2}{z}+\frac{b^4 c_1^4}{z^4}\right)-2 \varkappa
c_1^2 h(z) +\varkappa^2 c_1^4, \end{equation} where $z \in
\mathbb{R} - \{0\}$.
\item The union of two parabolas \begin{equation} \label{Eq:Left Parabola} k =
\left(h-\varkappa c_1^2 -\frac{a}{\varkappa } +\frac{\sqrt{a^2-4
\varkappa b^2 }}{\varkappa }\right)^2 \end{equation} and
\begin{equation} \label{Eq:Right Parabola} k = \left(h-\varkappa c_1^2
-\frac{a}{\varkappa } - \frac{\sqrt{a^2-4 \varkappa b^2 }}{\varkappa
}\right)^2. \end{equation} \end{enumerate}
\end{lemma}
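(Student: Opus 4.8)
The plan is to locate the critical points of the momentum map $\mathcal F = (H,K)$ on the orbit $M_{a,b}$ and then compute their images. Since $M_{a,b}$ is the common level set $\{f_1 = a,\ f_2 = b\}$, a point is critical for $\mathcal F|_{M_{a,b}}$ exactly when the four differentials $dH$, $dK$, $df_1$, $df_2$ are linearly dependent there, i.e. the $4\times 6$ matrix of their gradients in the coordinates $(J_1, J_2, J_3, x_1, x_2, x_3)$ has rank at most $3$. Equivalently, there is a nonzero tuple $(\lambda,\mu,\nu_1,\nu_2)$ with $\lambda\,dH + \mu\,dK + \nu_1\,df_1 + \nu_2\,df_2 = 0$, which in coordinates is a system of six scalar equations. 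The diagram $\Sigma_{h,k}$ is the image under $(H,K)$ of the solutions of this system together with $f_1=a$, $f_2=b$, so the task is to eliminate the phase variables and obtain relations purely in $(h,k,a,b)$.

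Two observations organize the work. First, by \eqref{Eq:First_Integral} we have $K = P^2 + Q^2$ with $P = J_1^2 - J_2^2 - 2c_1 x_1 + \varkappa c_1^2$ and $Q = 2J_1 J_2 - 2 c_1 x_2$; hence $K \ge 0$, and on $\{K=0\} = \{P=Q=0\}$ we get $dK = 2P\,dP + 2Q\,dQ = 0$, so these points are automatically critical and map into the line $k=0$, giving family (1). Second, I would use the involutions $\sigma_2,\sigma_3$: the flows of $H$ and $K$ preserve their fixed loci, so the critical set is invariant, and to read off the curves it projects to it suffices to examine critical points lying in the fixed locus $N=\{J_2 = x_2 = 0\}$ of $\sigma_2$ (and symmetrically $\{J_3 = x_3 = 0\}$).

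The heart of the proof is the computation on $N$. There $Q=0$, $K=P^2$, and one checks that all four gradients have vanishing $J_2$- and $x_2$-components, so they lie in the coordinate subspace spanned by $(J_1, J_3, x_1, x_3)$ and the rank condition collapses to the vanishing of a single $4\times4$ determinant. I expect this determinant to factor, up to a nonzero constant, as
\[ P\,(J_3 x_1 - J_1 x_3)\,(J_1 J_3 - c_1 x_3), \]
the three factors producing the three families. The factor $P$ recovers $k=0$. On $J_3 x_1 - J_1 x_3 = 0$ one has $(x_1,x_3)=t(J_1,J_3)$; the Casimirs give $f_2 = t(J_1^2+J_3^2)=b$ and $f_1 = (t^2+\varkappa)(J_1^2+J_3^2)=a$, forcing $bt^2 - at + \varkappa b = 0$, while the identity $H+P = 2(J_1^2+J_3^2)+\varkappa c_1^2$ yields $k = P^2 = (h - \varkappa c_1^2 - 2(J_1^2+J_3^2))^2$; the two roots $t$, after eliminating $J_1^2+J_3^2$, give exactly the shifts $(a \mp \sqrt{a^2-4\varkappa b^2})/\varkappa$, i.e. the parabolas \eqref{Eq:Left Parabola}, \eqref{Eq:Right Parabola}. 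On $J_1 J_3 = c_1 x_3$, setting $z = b c_1/J_1$ and using $f_2 = b$ gives $H = J_1^2 + 2bc_1/J_1 = h(z)$ as in \eqref{Eq:Parametric Curve}; eliminating $J_3^2$ via $f_1 = a$ (a quadratic whose two roots give $P = \pm\sqrt{k(z)}$, so that $k=P^2$ is single-valued) reproduces the stated $k(z)$, which is family (2).

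The main obstacle is completeness. The symmetry-locus computation exhibits the three curves and shows they lie in $\Sigma_{h,k}$, but the stated containment requires ruling out critical values arising from points with $(J_2,x_2)\ne 0$ and $(J_3,x_3)\ne 0$. I would dispatch this either by arguing that every critical submanifold is $\sigma_2$-invariant and meets $N$ along a section mapping onto the whole of its image curve, or, more safely, by carrying out the elimination for the full six-equation system. The latter is the genuinely laborious step: one splits into cases according to which of $J_2,x_2,J_3,x_3$ vanish, solves the six linear equations for $(\lambda,\mu,\nu_1,\nu_2)$, and reduces the compatibility conditions together with $f_1=a$, $f_2=b$ to polynomial identities in $h$ and $k$, the expectation being that no curve beyond the three listed appears.
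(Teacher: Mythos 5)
Your computation on the fixed locus $N=\{J_2=x_2=0\}$ is sound as far as it goes: the four gradients do collapse into the $(J_1,J_3,x_1,x_3)$-subspace there, the $4\times4$ determinant indeed factors (up to a constant $-32$) as $P\,(J_3x_1-J_1x_3)\,(J_1J_3-c_1x_3)$, and your eliminations of the three factors correctly reproduce the line $k=0$, the two parabolas \eqref{Eq:Left Parabola}, \eqref{Eq:Right Parabola} and the parametric curve \eqref{Eq:Parametric Curve}. But this only shows that critical points lying in $\{K=0\}$ or in the symmetry loci have images on the three stated curves, whereas the lemma asserts containment of the \emph{entire} bifurcation diagram. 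This is the gap you flag yourself, and neither of your remedies closes it. Remedy (a) rests on a false principle: a $\sigma_2$-invariant critical component need not meet the fixed locus of $\sigma_2$ at all. The paper's classification of the critical set (Assertion \ref{A:Rank_1_Preimage}) exhibits exactly such components: the family \ref{Rank1SeriesB} is four-dimensional, given by $J_2=0$, $x_3=J_1J_3/c_1$ with $x_2$ \emph{free}, so most of it lies outside $N$; worse, the family \ref{Rank1SeriesC} requires $J_2J_3\ne 0$ and is therefore disjoint from both fixed loci $\{J_2=x_2=0\}$ and $\{J_3=x_3=0\}$, even though one checks it is invariant under both $\sigma_2$ and $\sigma_3$. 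So no invariance/section argument can recover the images of these points from your slice computation; whether their images land on the same three curves is precisely what has to be proved.

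Remedy (b) is indeed what is needed, but it is not a remedy you have carried out --- it \emph{is} the proof, and it is what the paper does. The paper phrases the criticality condition via the Hamiltonian vector fields rather than the differentials: the critical points are where $X_H$ and $X_K$ are proportional, i.e.\ where all fifteen $2\times2$ minors of the $6\times2$ matrix $(X_H,X_K)$ vanish; the minor $\Delta_{13}$ factors as $16c_1\left(c_1x_2-J_1J_2\right)\left(J_2J_3(\varkappa c_1-x_1)+(J_1J_3-c_1x_3)x_2\right)$, and the ensuing exhaustion produces the six families of Assertion \ref{A:Rank_1_Preimage}. The images of the families your slice misses are then computed by a separate device (Assertion \ref{A:Rank1_Preimages_Images}): for the families landing on the parabolas one first shows $k=\left(-\lambda/2\right)^2$, where $\lambda$ is the proportionality coefficient in $X_K+\lambda X_H=0$, and then verifies $-\lambda/2 = h-\varkappa c_1^2-\frac{a}{\varkappa}\pm\frac{\sqrt{a^2-4\varkappa b^2}}{\varkappa}$ using the relation $\frac{a}{b}=\frac{x_3}{J_3}+\varkappa\frac{J_3}{x_3}$; for the family \ref{Rank1SeriesB} one checks that $(h,k)$ depends only on $z=J_3^2+c_1x_1$ even when $x_2\ne0$. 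Until you perform this global elimination, or otherwise bound the images of the critical points with $x_2\ne0$ or $J_2J_3\ne0$, the containment claimed in the lemma is not established.
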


The proof of Lemma \ref{L:Curve_Images_B_not_0_Kappa_not_0} is given
in Section \ref{SubS:CritRank1}.

In order to construct the bifurcation diagrams of the momentum
mapping it remains to throw away several parts of the curves
described in Lemma~\ref{L:Curve_Images_B_not_0_Kappa_not_0}. The
precise description of the bifurcation diagrams is given in the
following theorem.

\begin{figure}[!htb]
\minipage{0.48\textwidth}
\includegraphics[width=\textwidth]{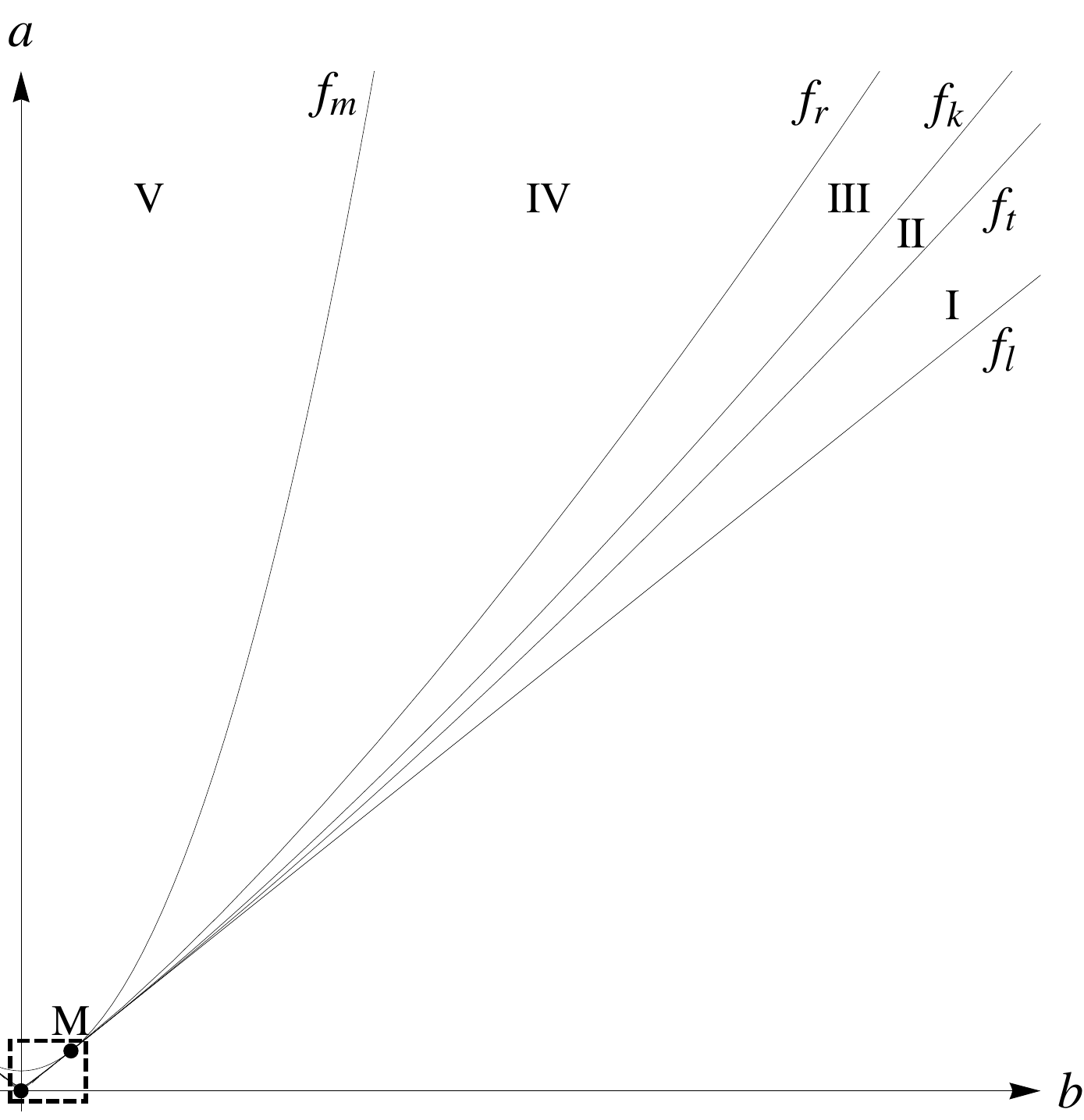}
   \caption{Partition of the set of parameters} \label{Fig:Areas_Big}
\endminipage
\hspace{0.04\textwidth}
\minipage{0.48\textwidth}
\includegraphics[width=\textwidth]{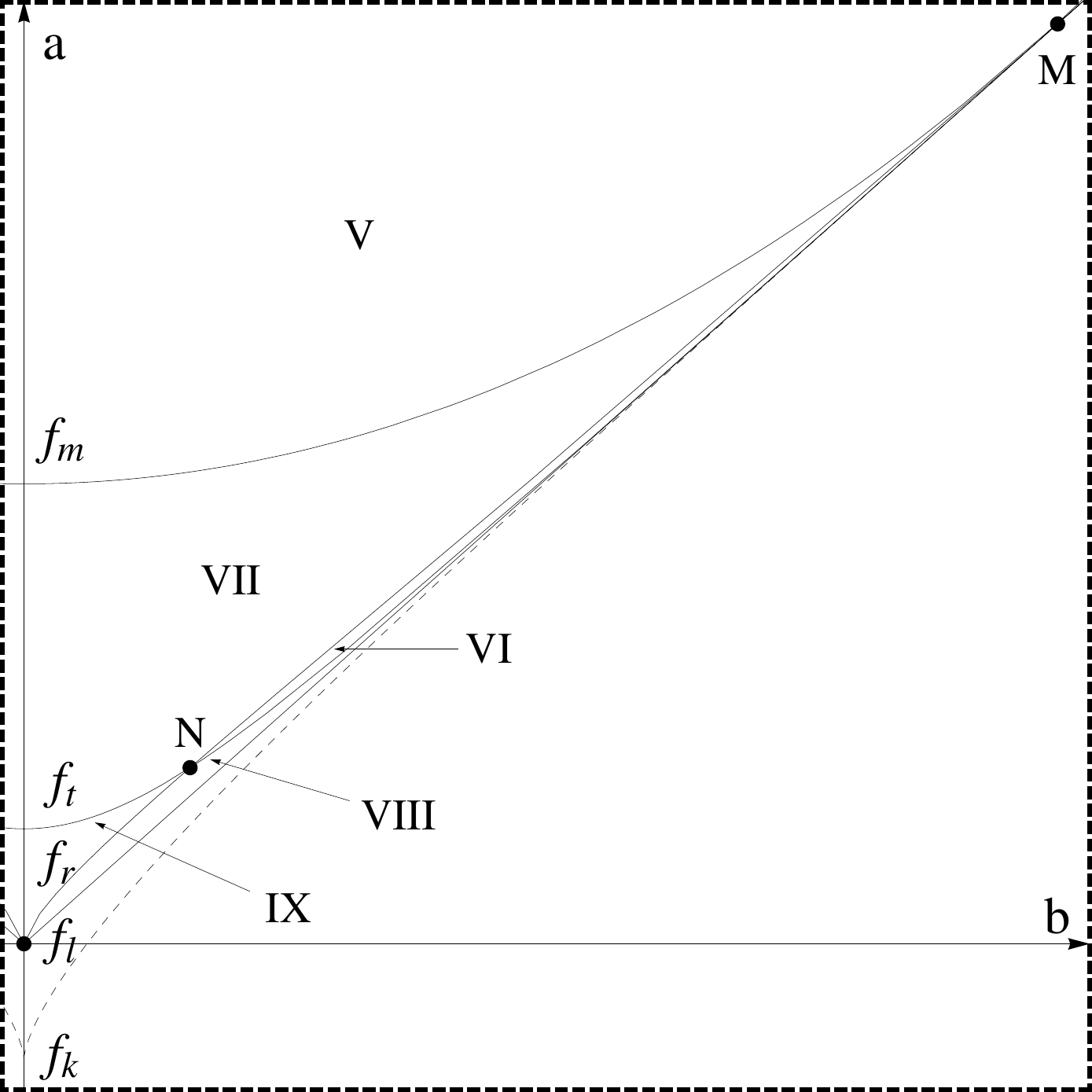}
   \caption{An enlarged fragment of Fig. \ref{Fig:Areas_Big}} \label{Fig:Areas_Small}
\endminipage
\end{figure}

\begin{theorem} \label{T:Bif_Diag}
Let $\varkappa > 0$ and $b > 0$. Then the functions $f_k, f_r, f_m,
f_t$ and $f_l$ given by the formulas
\begin{gather}
\label{Eq: Function_F_K} f_k(b) = \frac{3 b^{4/3}+6 \varkappa
b^{2/3} c_1^{4/3} - \varkappa ^2c_1^{8/3} }{4 c_1^{2/3}} \\
\label{Eq:Function_F_r} f_r(b) = \frac{b^{4/3}}{c_1^{2/3}}+
\varkappa b^{2/3}c_1^{2/3}
 \\ \label{Eq:Function_F_m} f_m(b)= \frac{b^2}{\varkappa c_1^2} +
 \varkappa^2c_1^2 \\ \label{Eq:Triple_Intersect} f_t(b) = \left(\frac{\varkappa c_1^2 + t^2}{2 c_1} \right)^2 + \varkappa t^2,
\qquad \text{where} \quad b =t \left(\frac{\varkappa c_1^2 + t^2}{2
c_1} \right) \\ \label{Eq:Function_F_l} f_l (b) = 2 \sqrt{\varkappa}
|b|
\end{gather} divide the area $\{b>0, a> 2 \sqrt{\varkappa} b\} \subset
\mathbb{R}^2(a,b)$ into $9$ areas (see Fig. \ref{Fig:Areas_Big} and
\ref{Fig:Areas_Small}). In Fig. \ref{Fig:Koval_U_1_A} --
\ref{Fig:Koval_D_2_B} the corresponding bifurcation diagrams of the
momentum mapping for the integrable Hamiltonian system with
Hamiltonian \eqref{Eq:Hamiltonian} and integral
\eqref{Eq:First_Integral} on the orbit $M_{a, b}$ of the Lie algebra
$\textrm{so}(4)$ are shown for each of these areas. More precisely,
in each case it is specified from which parts of the line $k=0$, the
curve \eqref{Eq:Parametric Curve} and two parabolas \eqref{Eq:Left
Parabola} and \eqref{Eq:Right Parabola} the bifurcation diagram of
the momentum mapping is composed.

\begin{figure}[!htb]
\minipage{0.48\textwidth}
    \includegraphics[width=\linewidth]{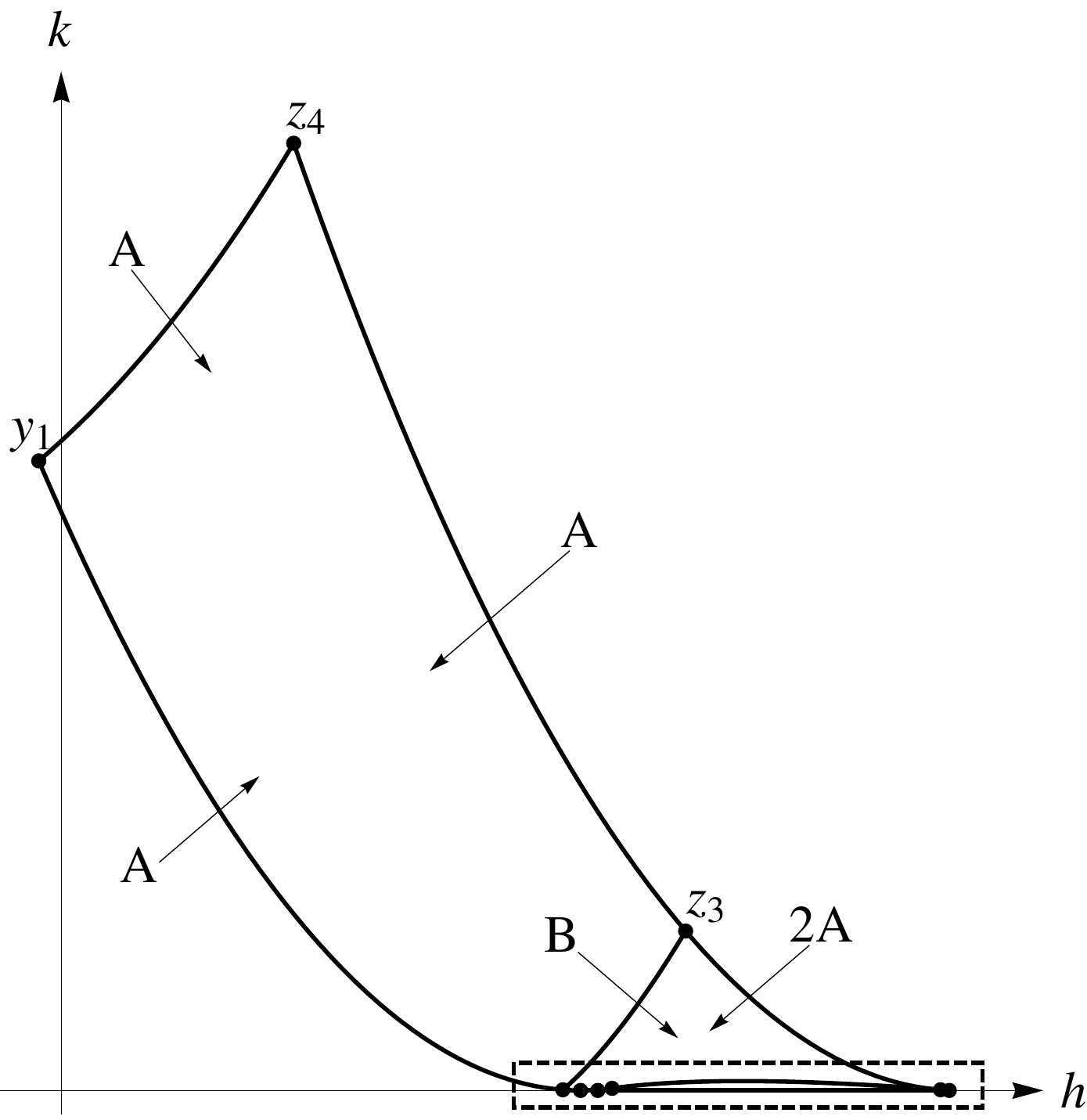}
   \caption{Area \textrm{I}: $\varkappa^3 c_1^4 < b^2, \quad f_l(b) <a <  f_t(b)$}
   \label{Fig:Koval_U_1_A}
\endminipage
\hspace{0.04\textwidth}
\minipage{0.48\textwidth}
    \includegraphics[width=\linewidth]{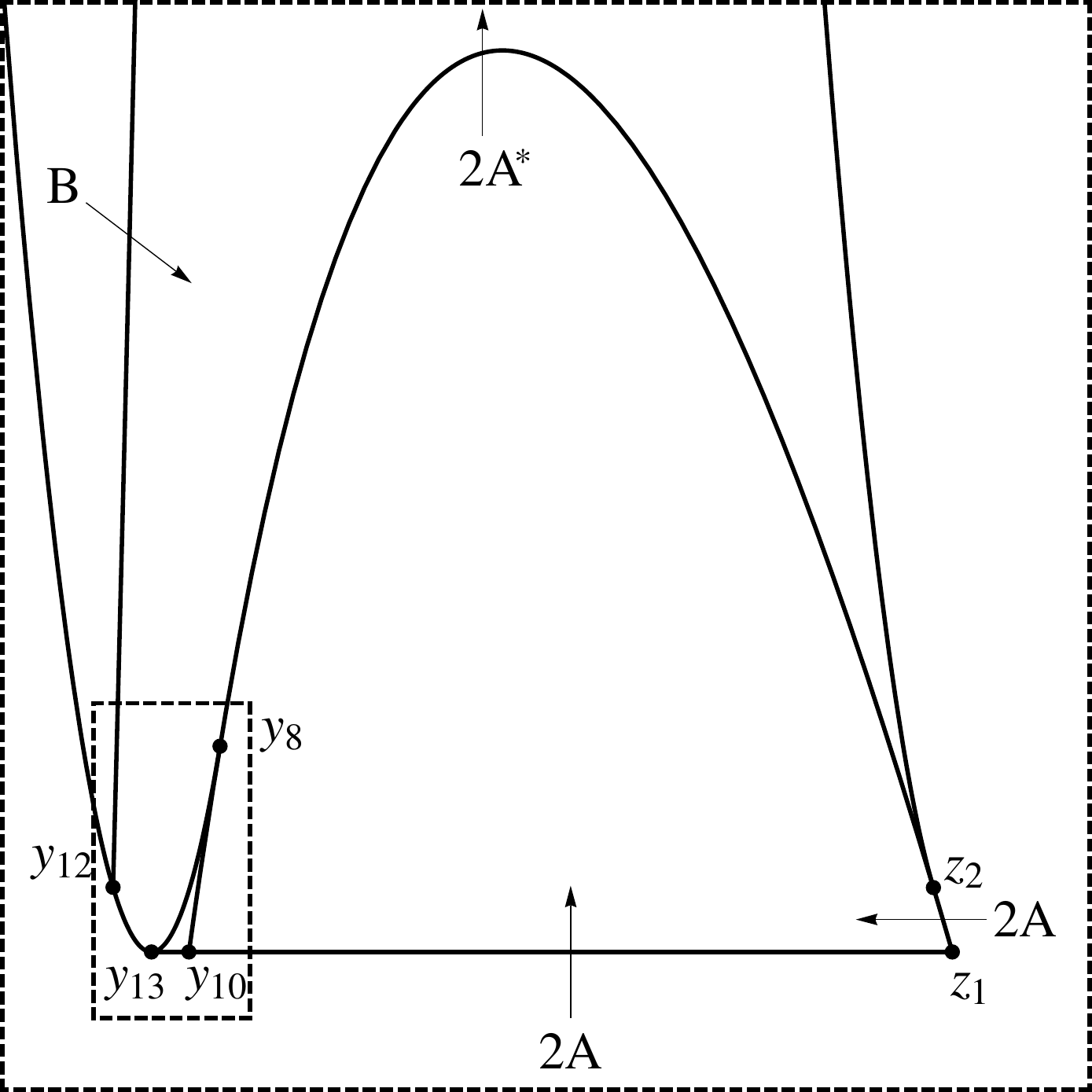}
   \caption{Area \textrm{I}: an enlarged fragment of Fig. \ref{Fig:Koval_U_1_A}}
   \label{Fig:Koval_U_1_B}
\endminipage
\end{figure}

\begin{figure}[!htb]
\minipage{0.48\textwidth}
    \includegraphics[width=\linewidth]{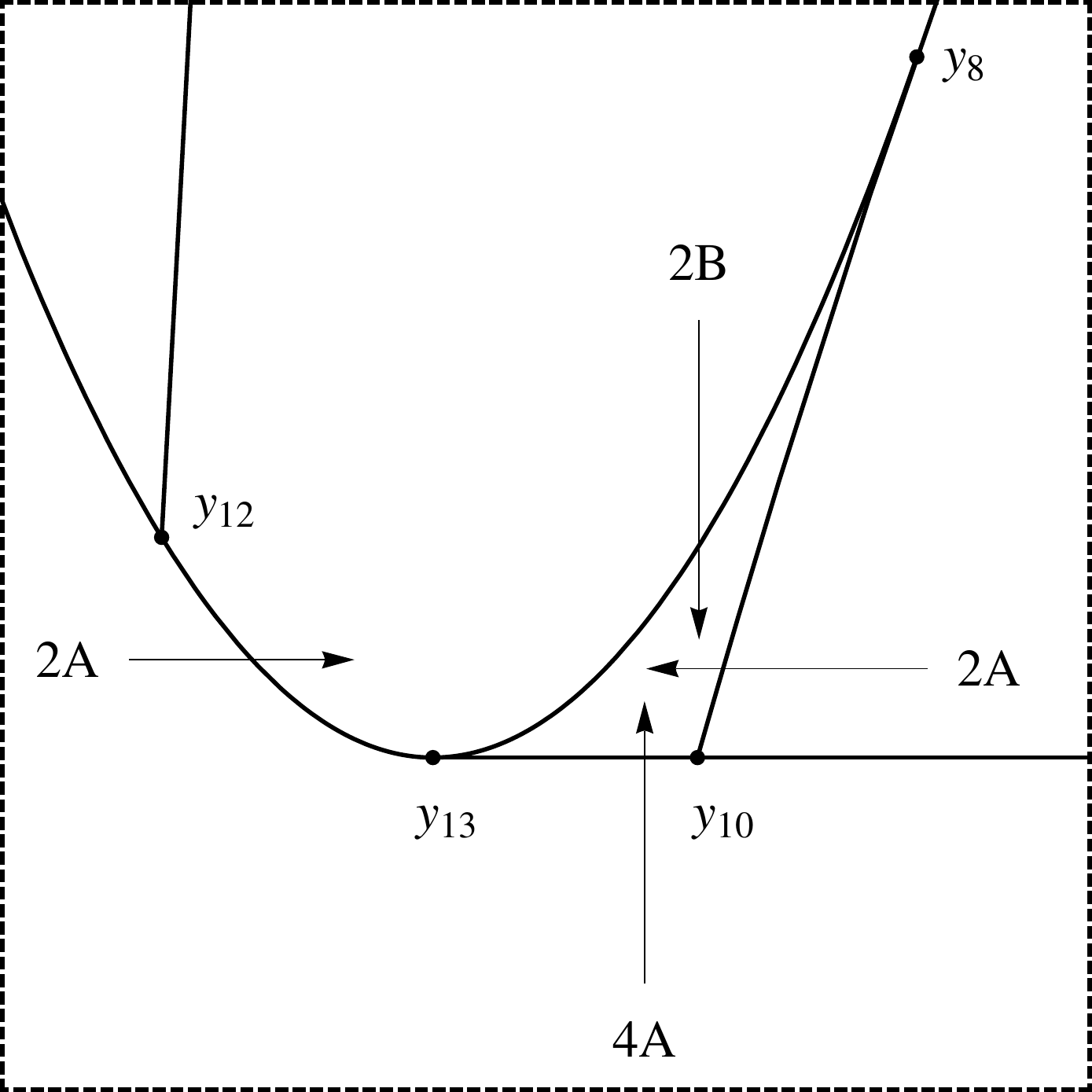}
   \caption{Area \textrm{I}: an enlarged fragment of Fig. \ref{Fig:Koval_U_1_B} }
   \label{Fig:Koval_U_1_C}
\endminipage
\hspace{0.04\textwidth}
\minipage{0.48\textwidth}
    \includegraphics[width=\linewidth]{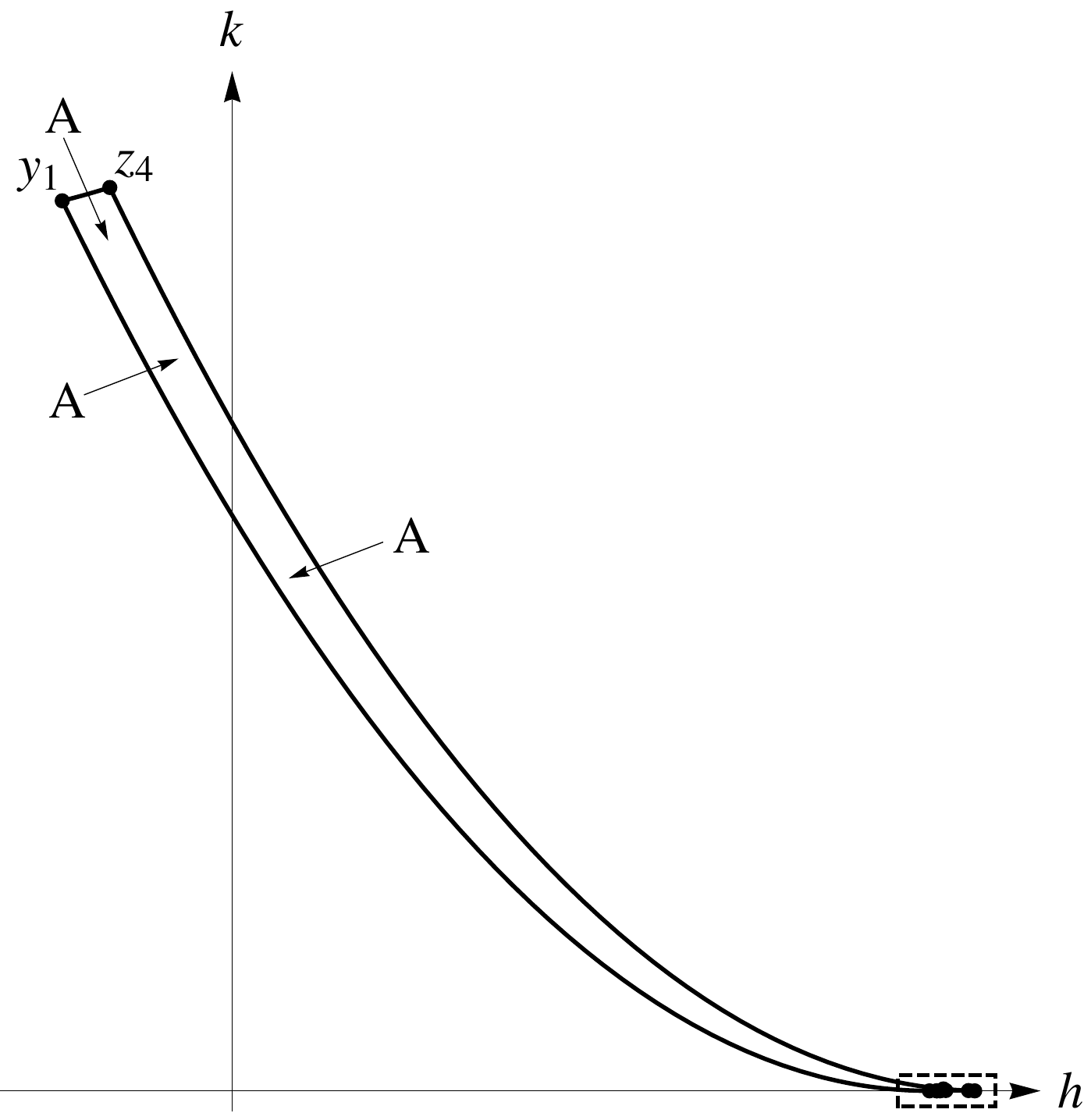}
   \caption{Area \textrm{II}: $\varkappa^3 c_1^4 < b^2, \quad f_t(b) <a< f_k(b)$}
   \label{Fig:Koval_U_2_A}
\endminipage
\end{figure}

\begin{figure}[!htb]
\minipage{0.48\textwidth}
    \includegraphics[width=\linewidth]{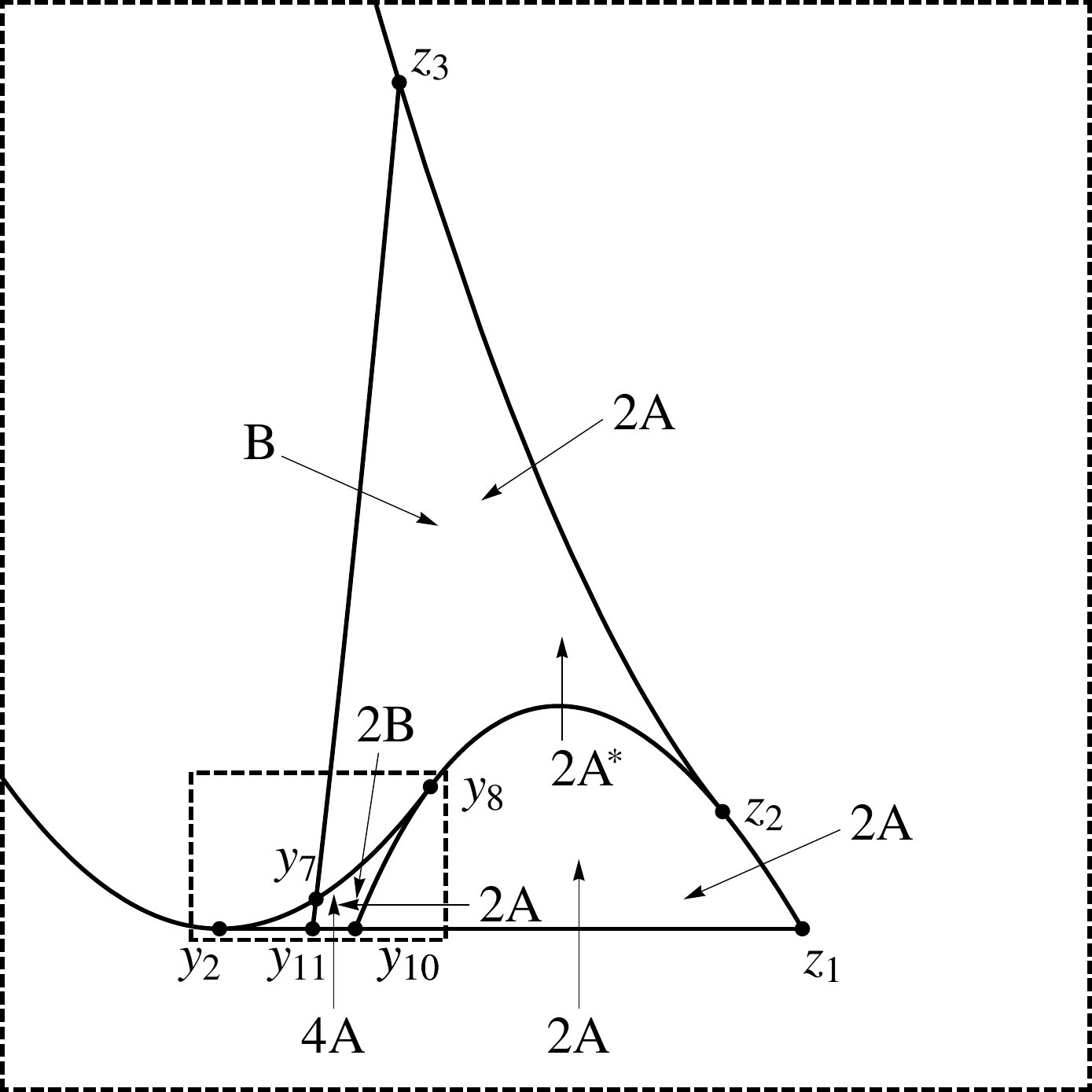}
   \caption{Area \textrm{II}: an enlarged fragment of Fig. \ref{Fig:Koval_U_2_A}}
   \label{Fig:Koval_U_2_B}
\endminipage
\hspace{0.04\textwidth}
\minipage{0.48\textwidth}
    \includegraphics[width=\linewidth]{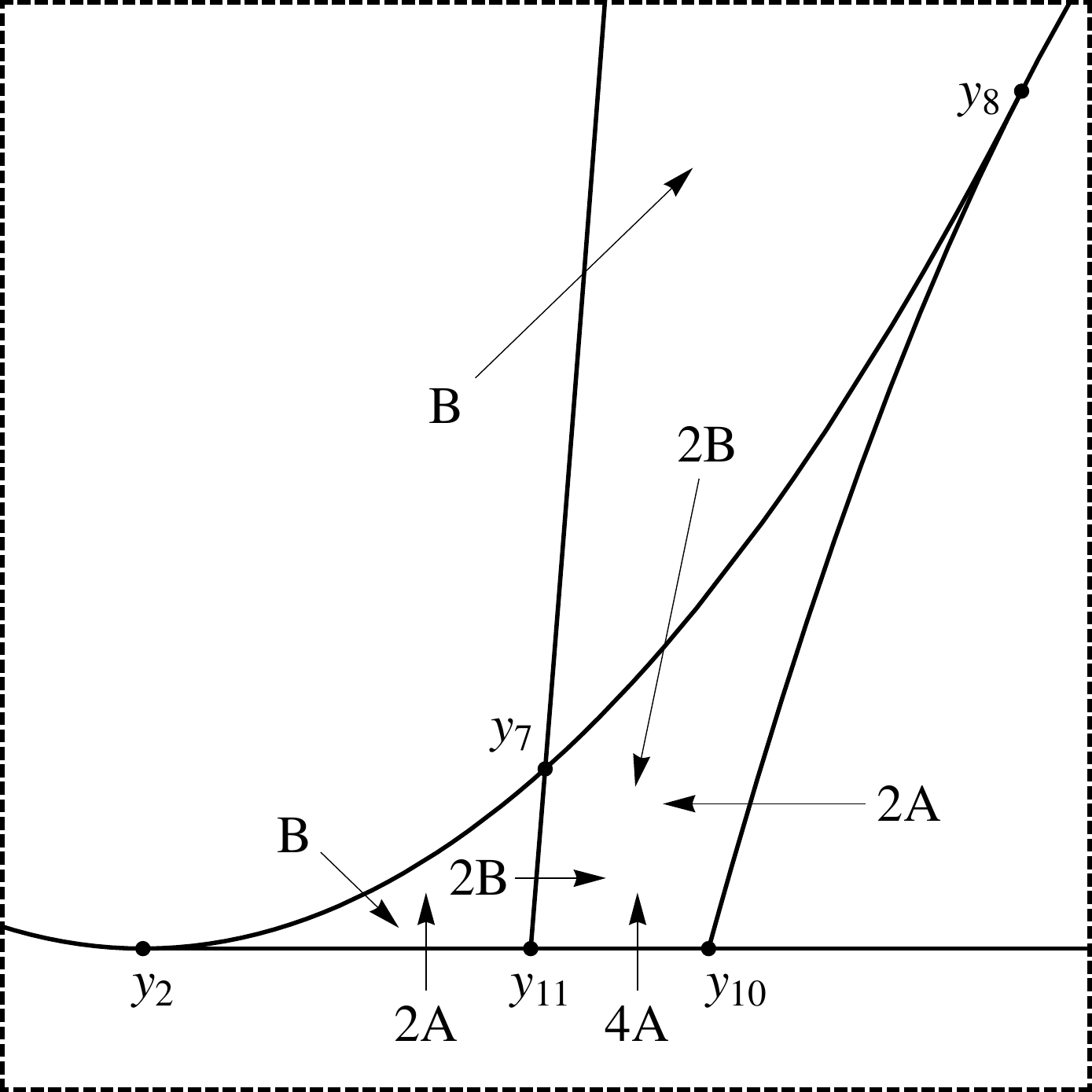}
   \caption{Area \textrm{II}: an enlarged fragment of Fig. \ref{Fig:Koval_U_2_B}}
   \label{Fig:Koval_U_2_C}
\endminipage
\end{figure}

\begin{figure}[!htb]
\minipage{0.48\textwidth}
    \includegraphics[width=\linewidth]{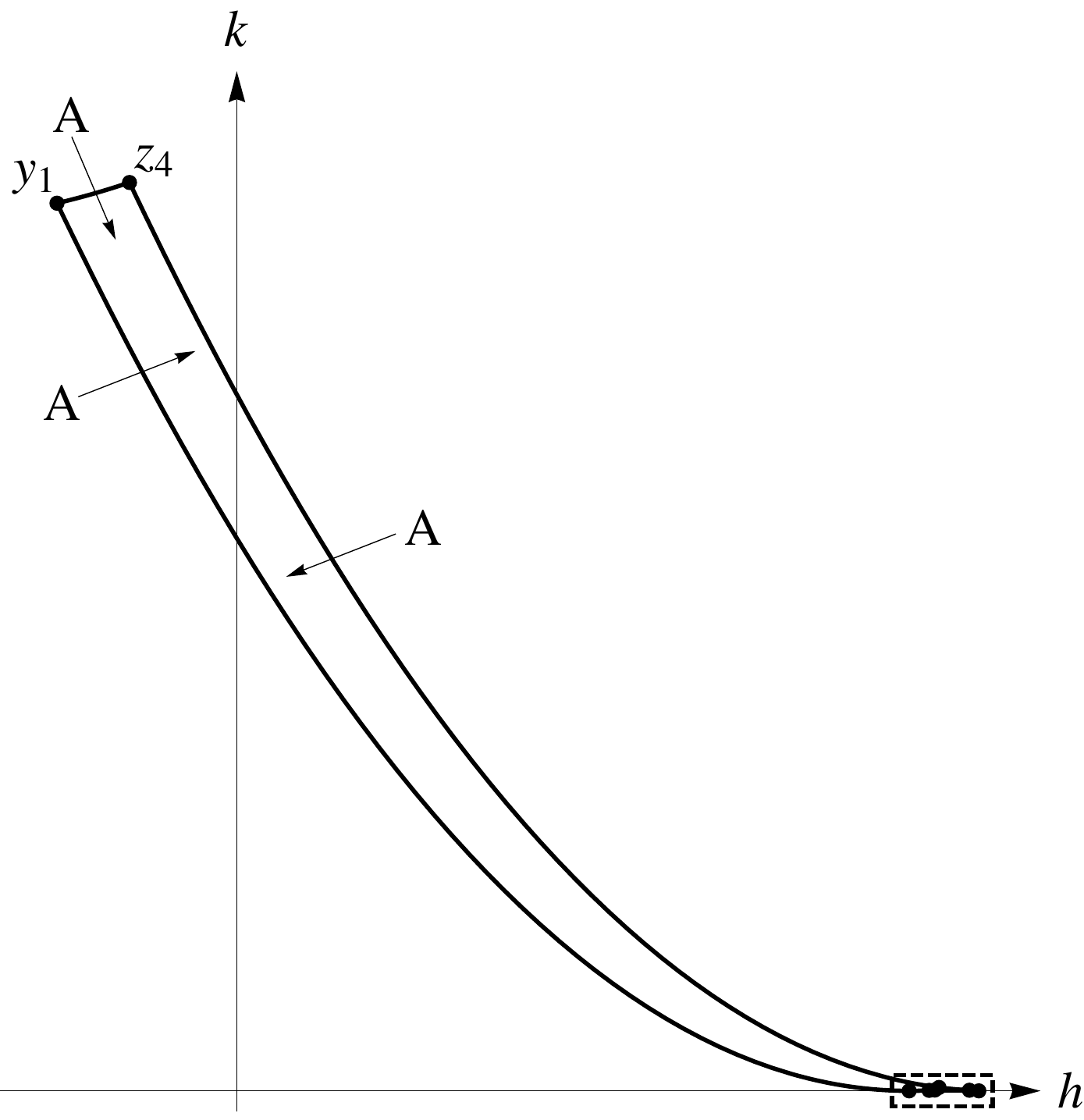}
   \caption{Area \textrm{III}: $\varkappa^3 c_1^4 < b^2, \quad f_k(b) <a< f_r(b)$}
   \label{Fig:Koval_U_3_A}
\endminipage
\hspace{0.04\textwidth}
\minipage{0.48\textwidth}
    \includegraphics[width=\linewidth]{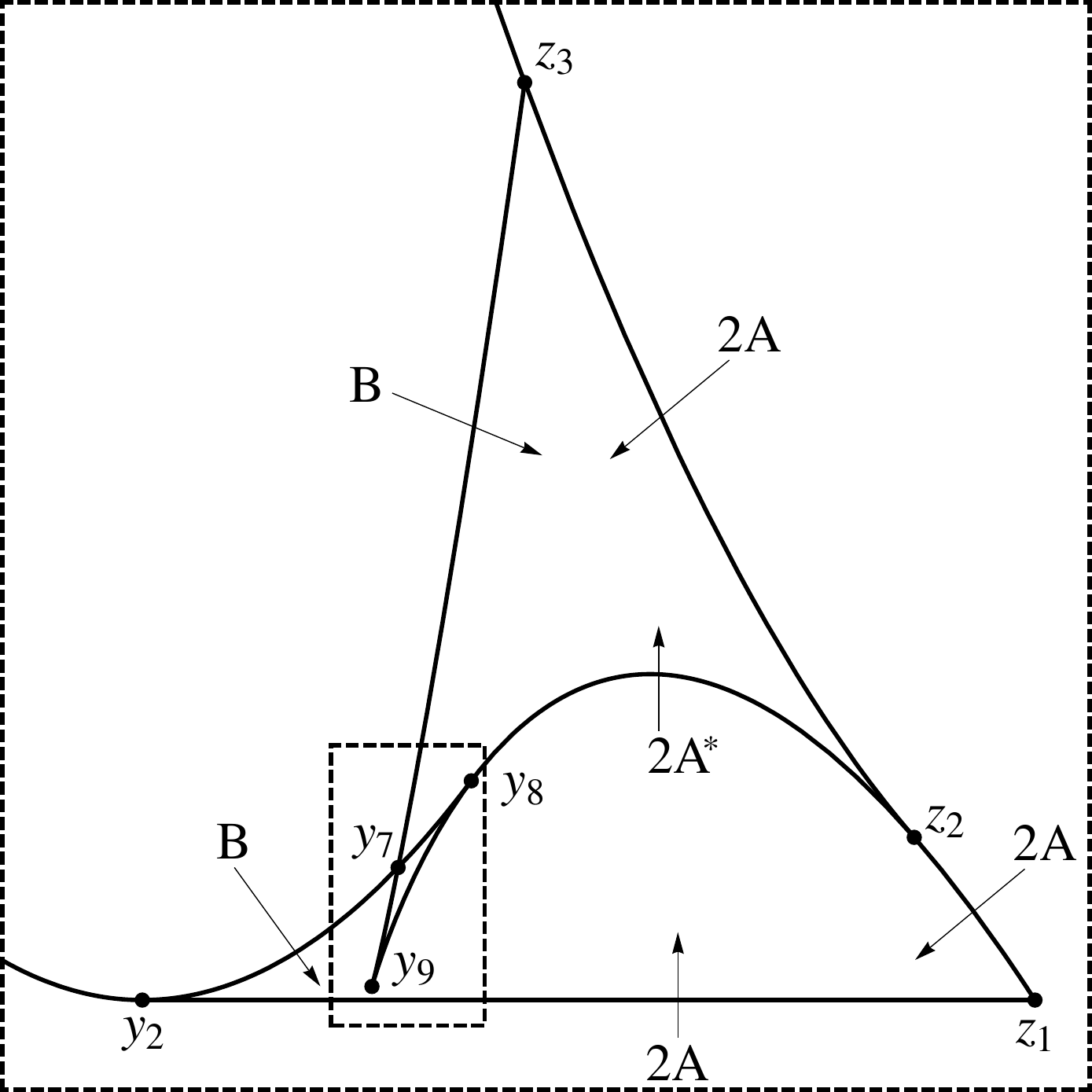}
   \caption{Area \textrm{III}: an enlarged fragment of Fig.
   \ref{Fig:Koval_U_3_A}}
   \label{Fig:Koval_U_3_B}
\endminipage
\end{figure}

\begin{figure}[!htb]
\minipage{0.48\textwidth}
    \includegraphics[width=\linewidth]{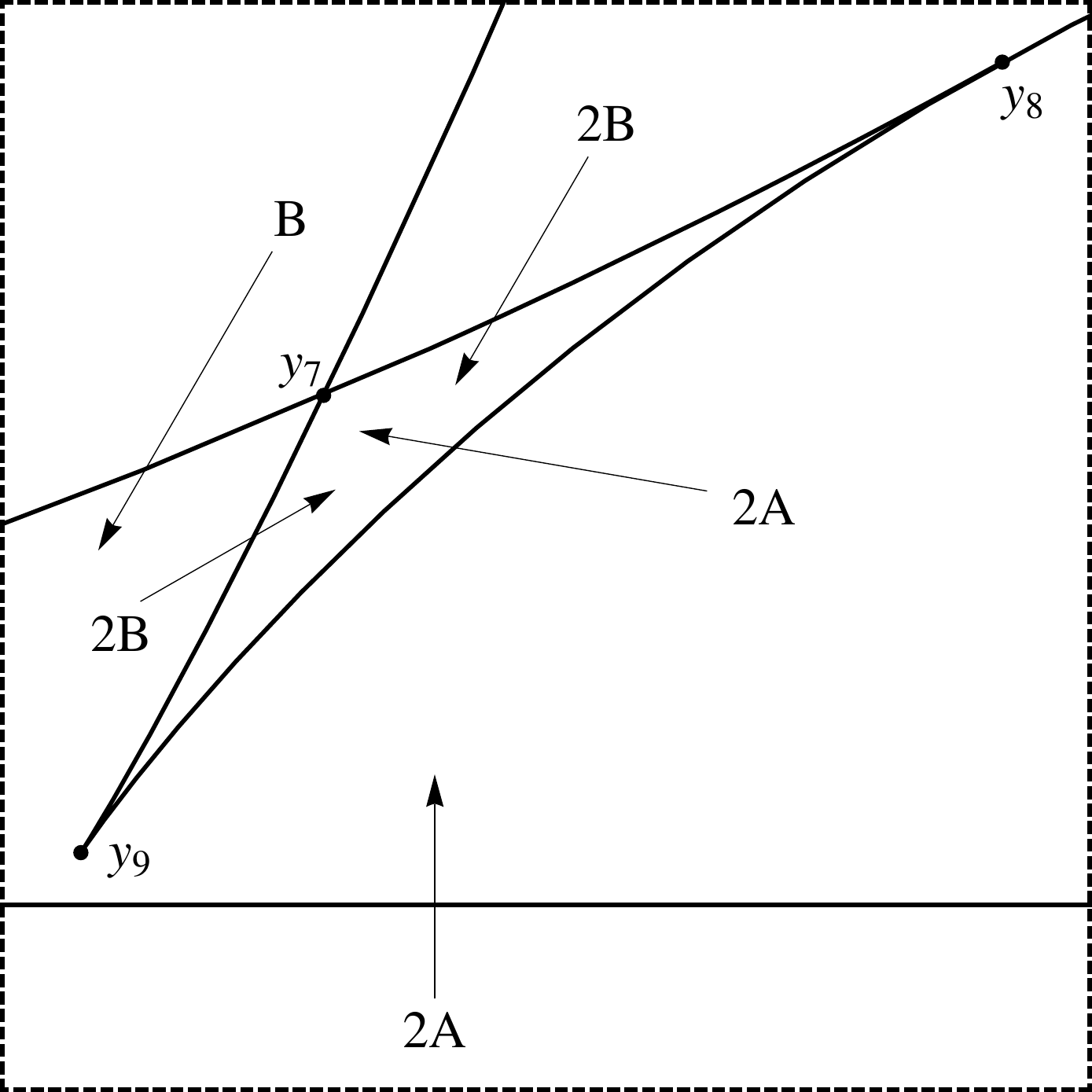}
   \caption{Area \textrm{III}: an enlarged fragment of Fig.\ref{Fig:Koval_U_3_B}}
   \label{Fig:Koval_U_3_C}
\endminipage
\hspace{0.04\textwidth}
\minipage{0.48\textwidth}
    \includegraphics[width=\linewidth]{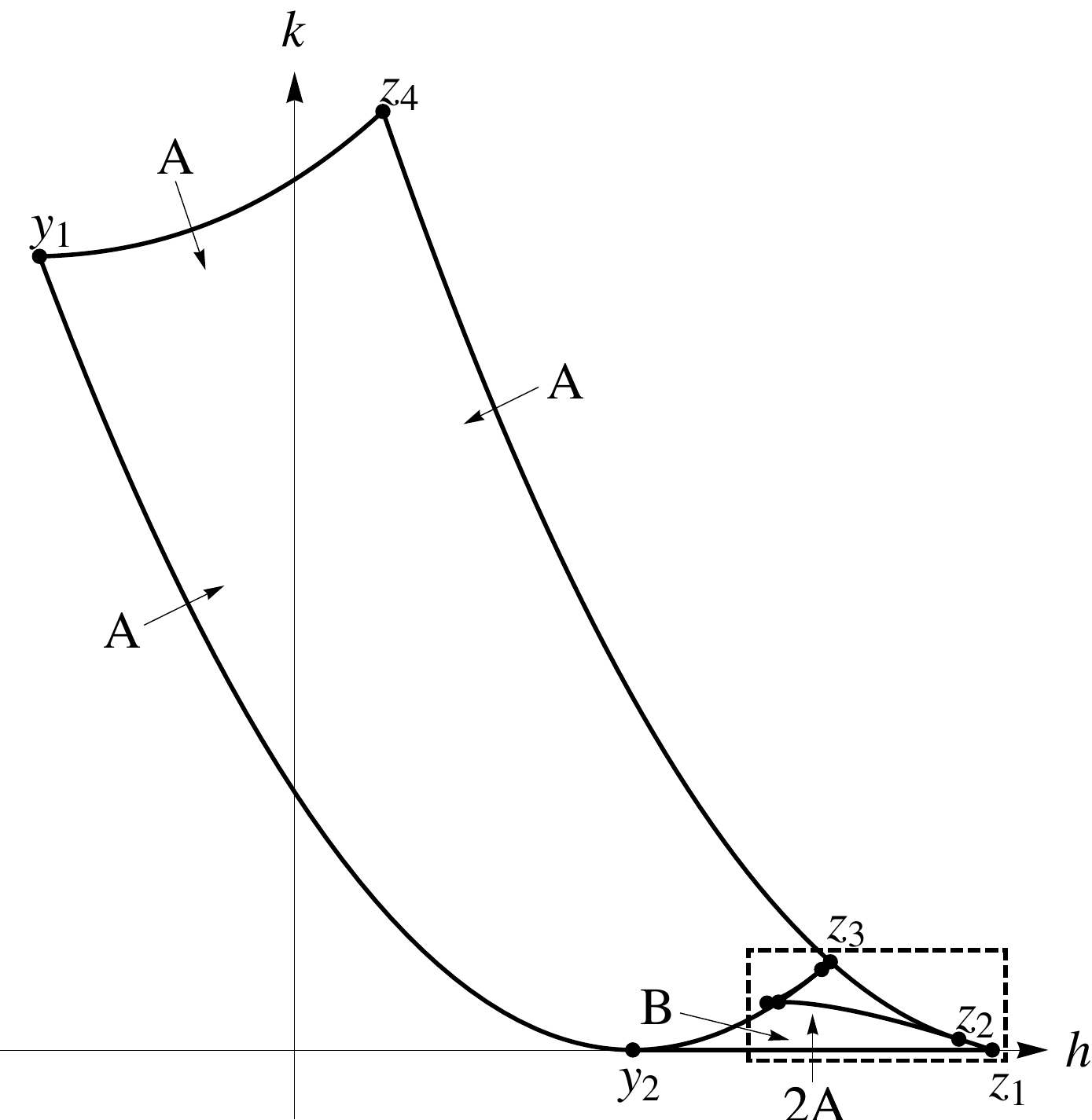}
   \caption{Area \textrm{IV}: $\varkappa^3 c_1^4 < b^2, \quad f_r(b) <a< f_m(b)$}
   \label{Fig:Koval_U_4_A}
\endminipage
\end{figure}

\begin{figure}[!htb]
\minipage{0.48\textwidth}
    \includegraphics[width=\linewidth]{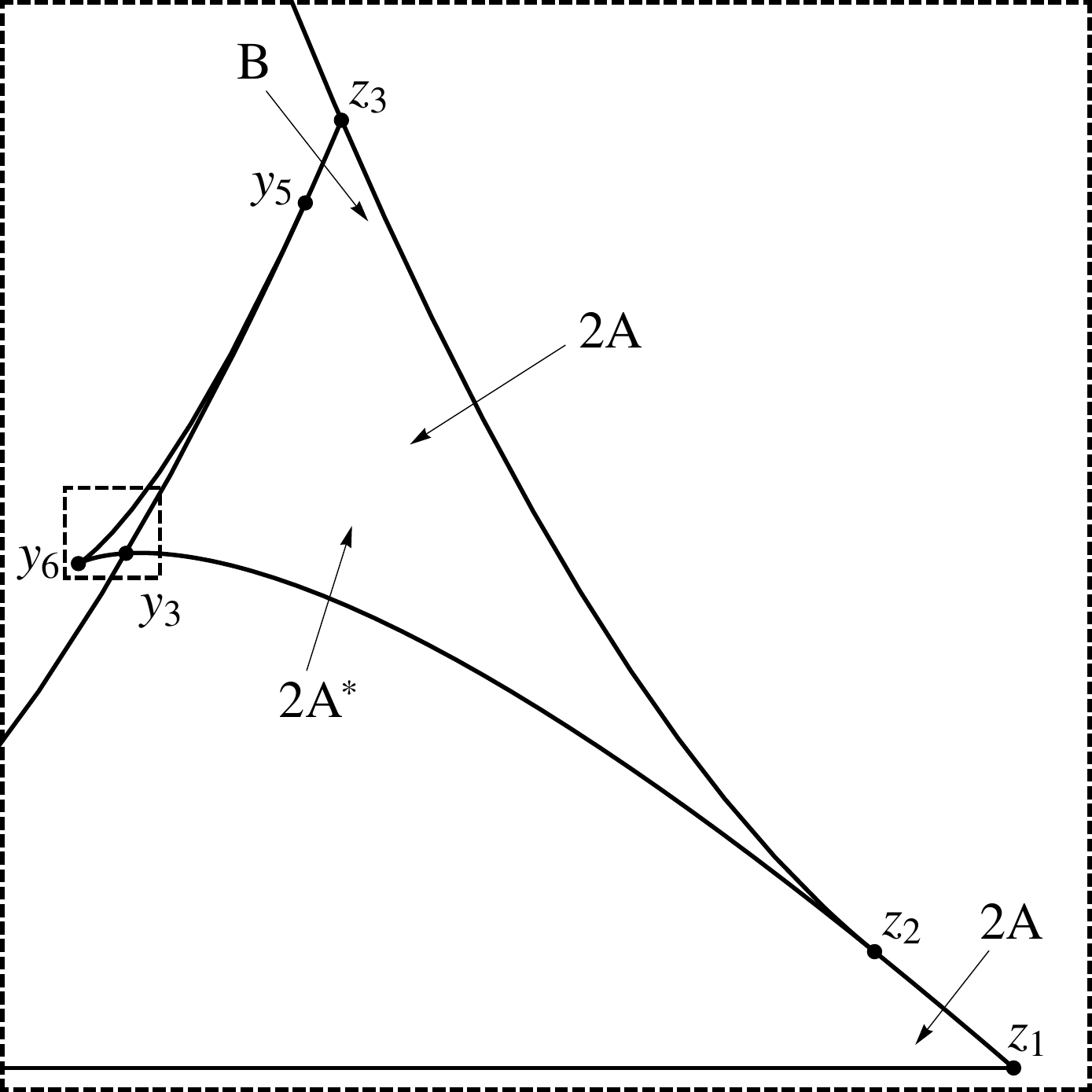}
   \caption{Area \textrm{IV}: an enlarged fragment of Fig.
   \ref{Fig:Koval_U_4_A}}
   \label{Fig:Koval_U_4_B}
\endminipage
\hspace{0.04\textwidth}
\minipage{0.48\textwidth}
    \includegraphics[width=\linewidth]{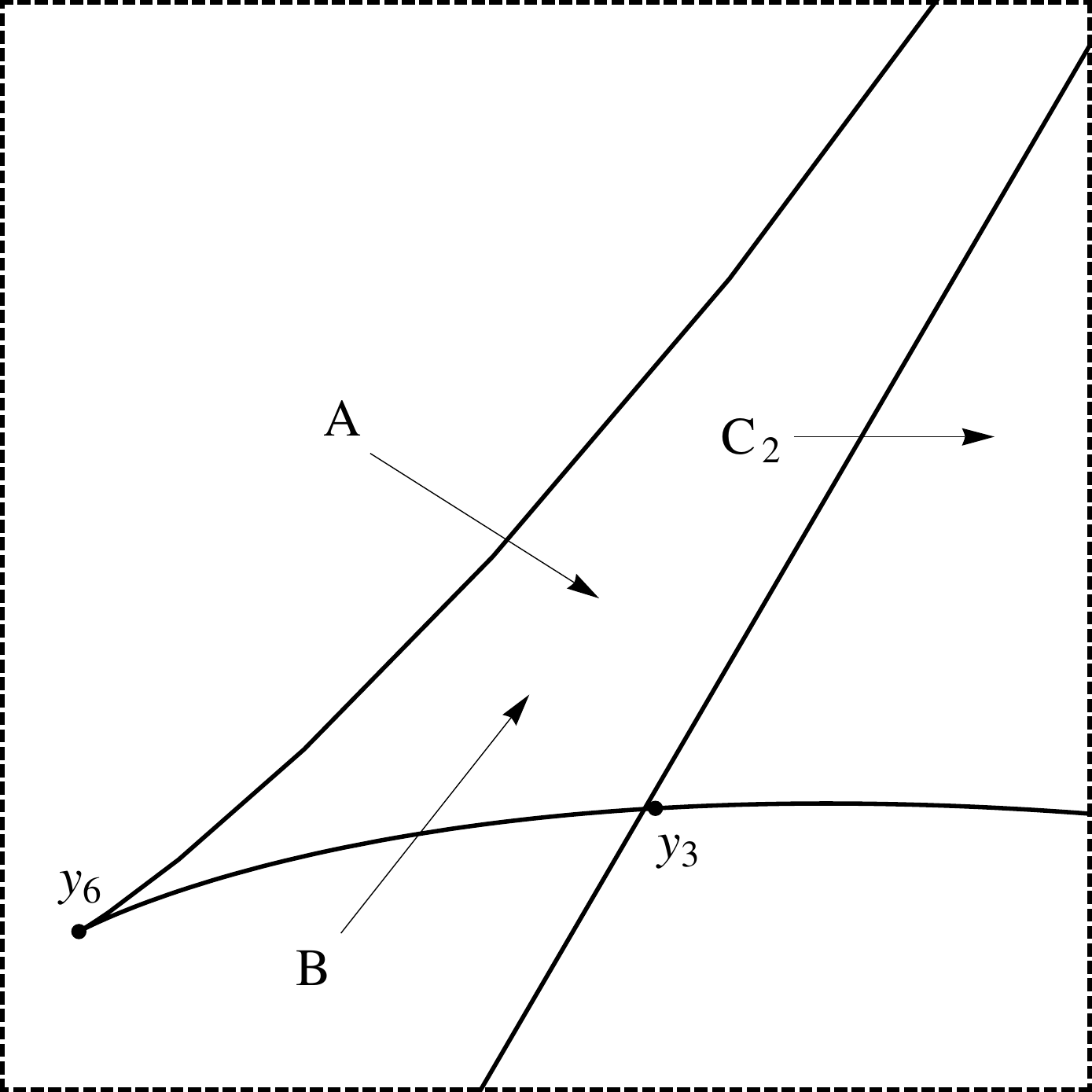}
   \caption{Area \textrm{IV}: an enlarged fragment of Fig.\ref{Fig:Koval_U_4_B}}
   \label{Fig:Koval_U_4_C}
\endminipage
\end{figure}

\begin{figure}[!htb]
\minipage{0.48\textwidth}
    \includegraphics[width=\linewidth]{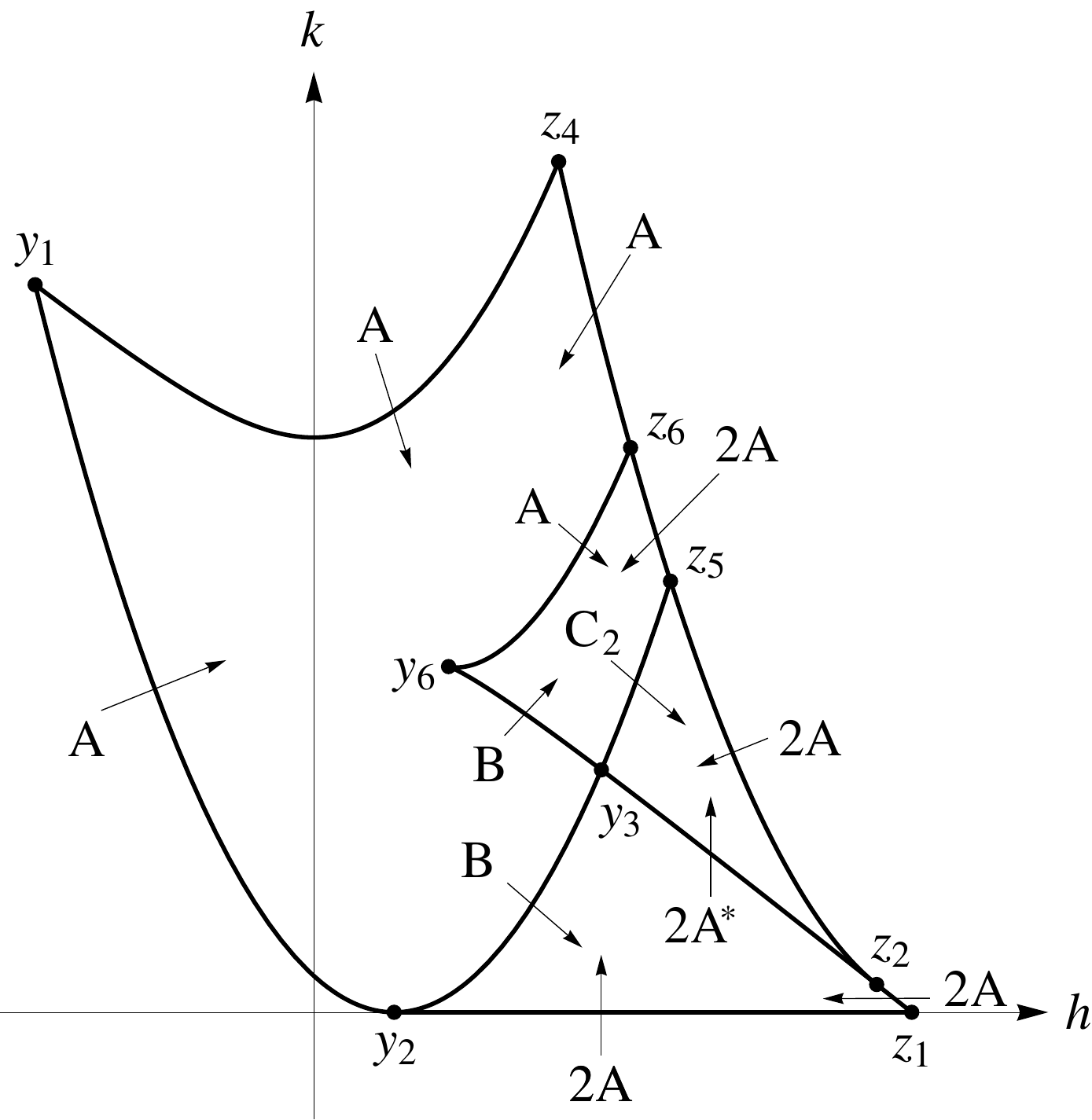}
   \caption{Area \textrm{V}: $f_m(b)<a$}
   \label{Fig:Koval_U_5_A}
\endminipage
\hspace{0.04\textwidth}
\minipage{0.48\textwidth}
    \includegraphics[width=\linewidth]{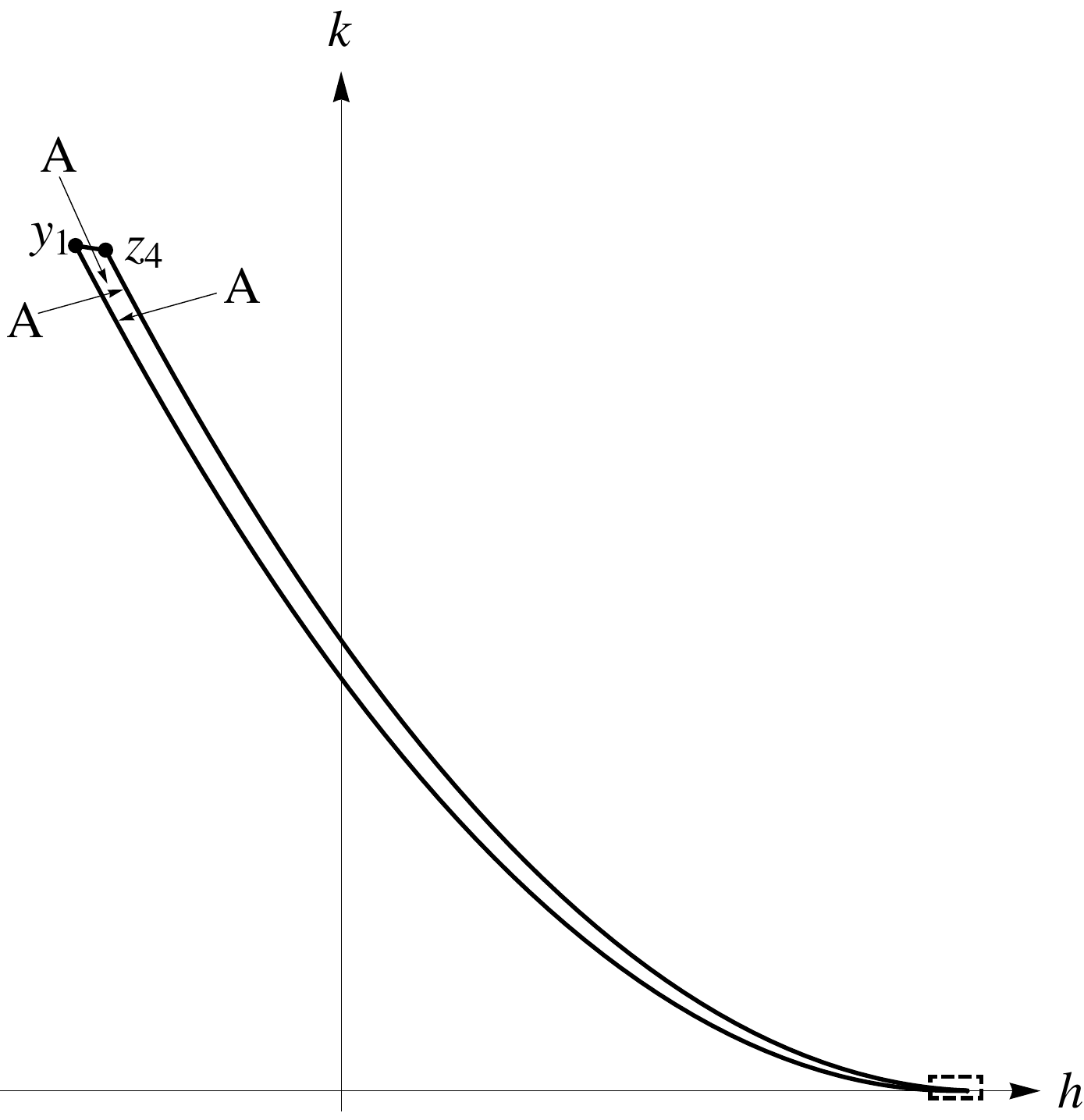}
   \caption{Area \textrm{VI}: $0<b^2 < \varkappa^3 c_1^4, \quad  f_t(b)<a<f_r(b)$}
   \label{Fig:Koval_M_1_A}
\endminipage
\end{figure}

\begin{figure}[!htb]
\minipage{0.48\textwidth}
    \includegraphics[width=\linewidth]{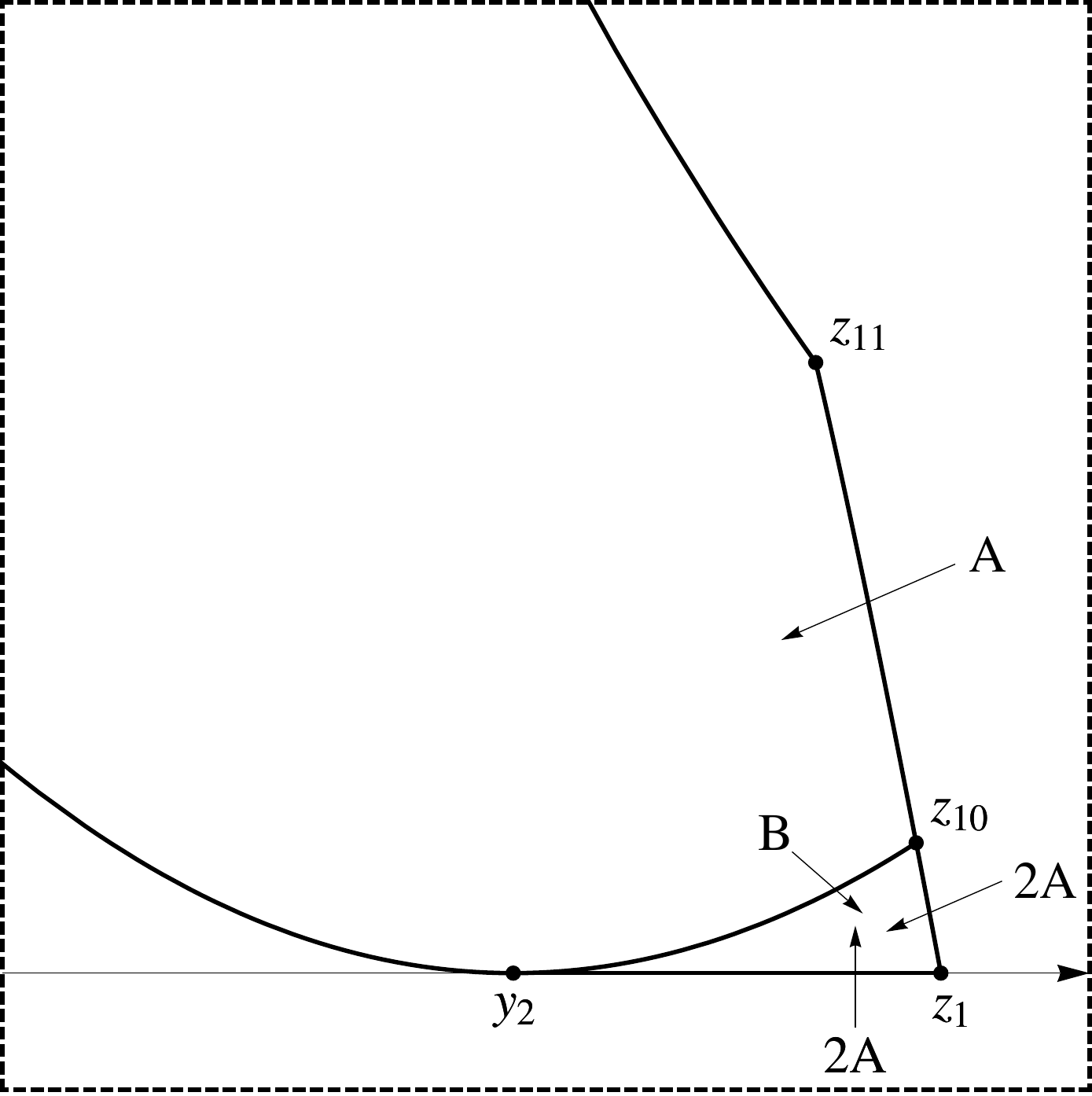}
   \caption{Area \textrm{VI}: an enlarged fragment of Fig.\ref{Fig:Koval_M_1_A}}
   \label{Fig:Koval_M_1_B}
\endminipage
\hspace{0.04\textwidth}
\minipage{0.48\textwidth}
    \includegraphics[width=\linewidth]{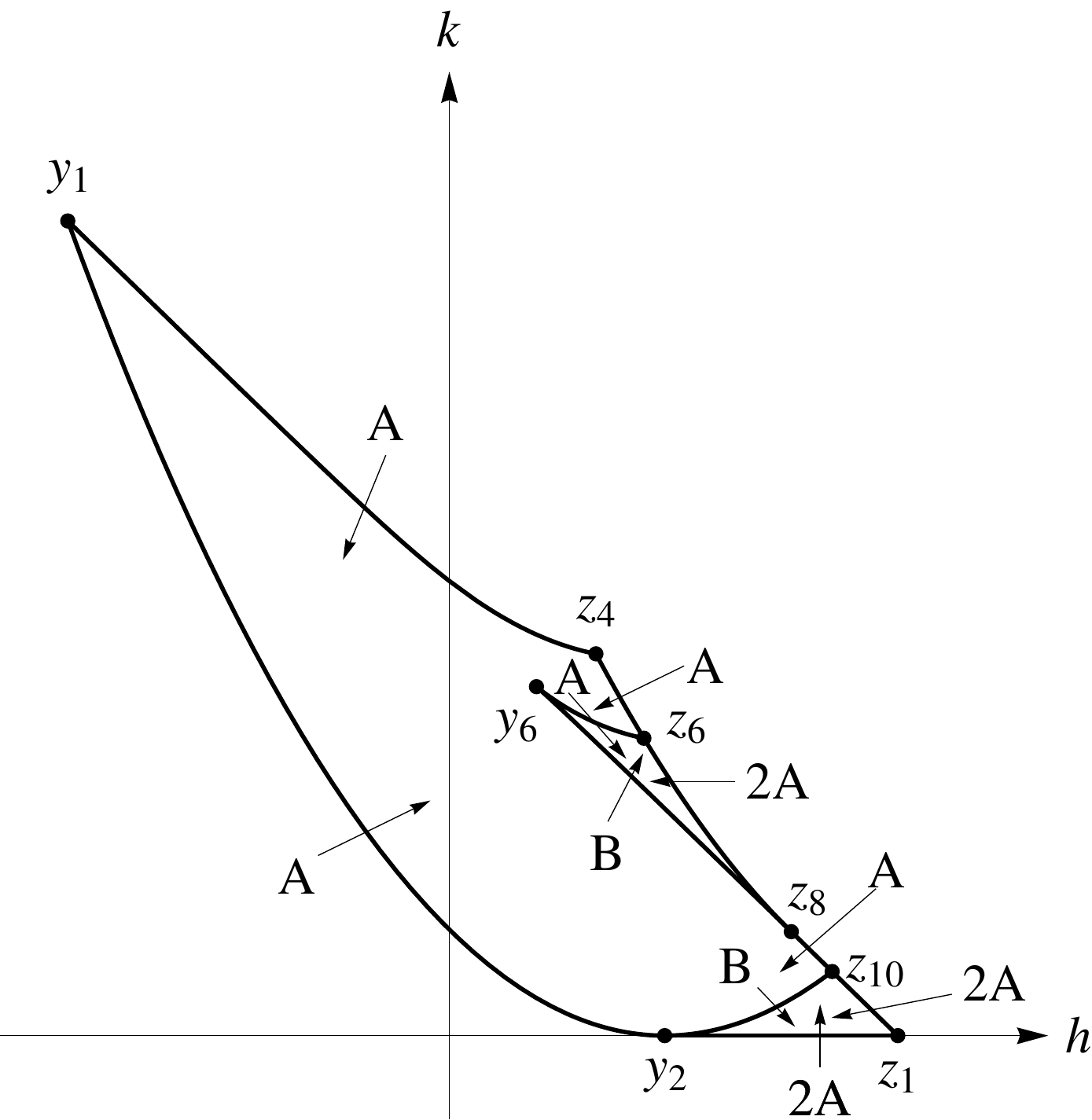}
   \caption{Area \textrm{VII}: $0<b^2 < \varkappa^3 c_1^4, \quad  \max(f_t(b),  f_r(b)) < a < f_m(b)$ }
      \label{Fig:Koval_M_2}
\endminipage
\end{figure}

\begin{figure}[!htb]
\minipage{0.48\textwidth}
    \includegraphics[width=\linewidth]{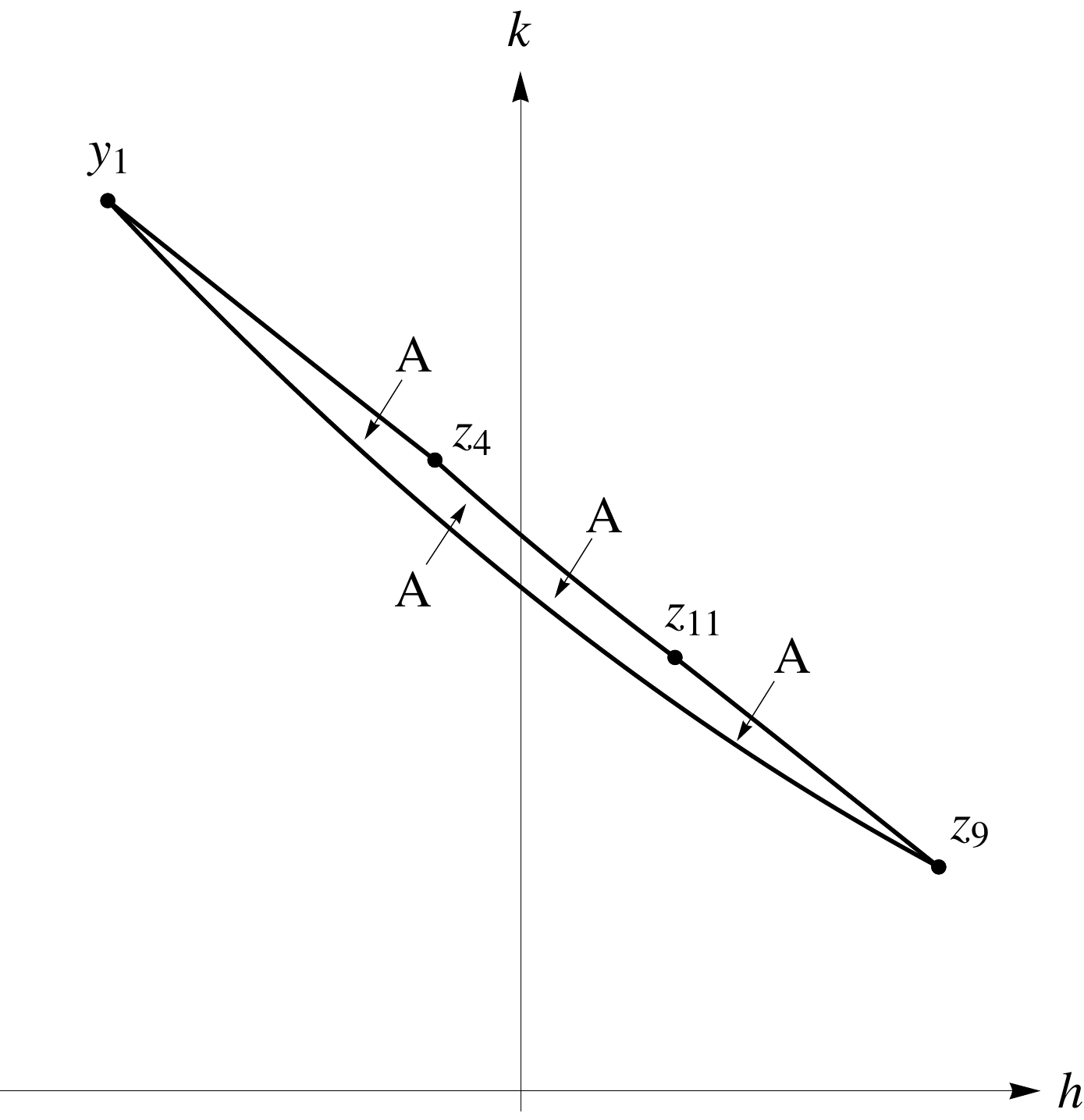}
   \caption{Area \textrm{VIII}: $0<b^2 < \varkappa^3 c_1^4, \quad  f_l(b)<a< \min(f_r(b), f_t(b))$}
      \label{Fig:Koval_D_1}
\endminipage
\hspace{0.04\textwidth}
\minipage{0.48\textwidth}
    \includegraphics[width=\linewidth]{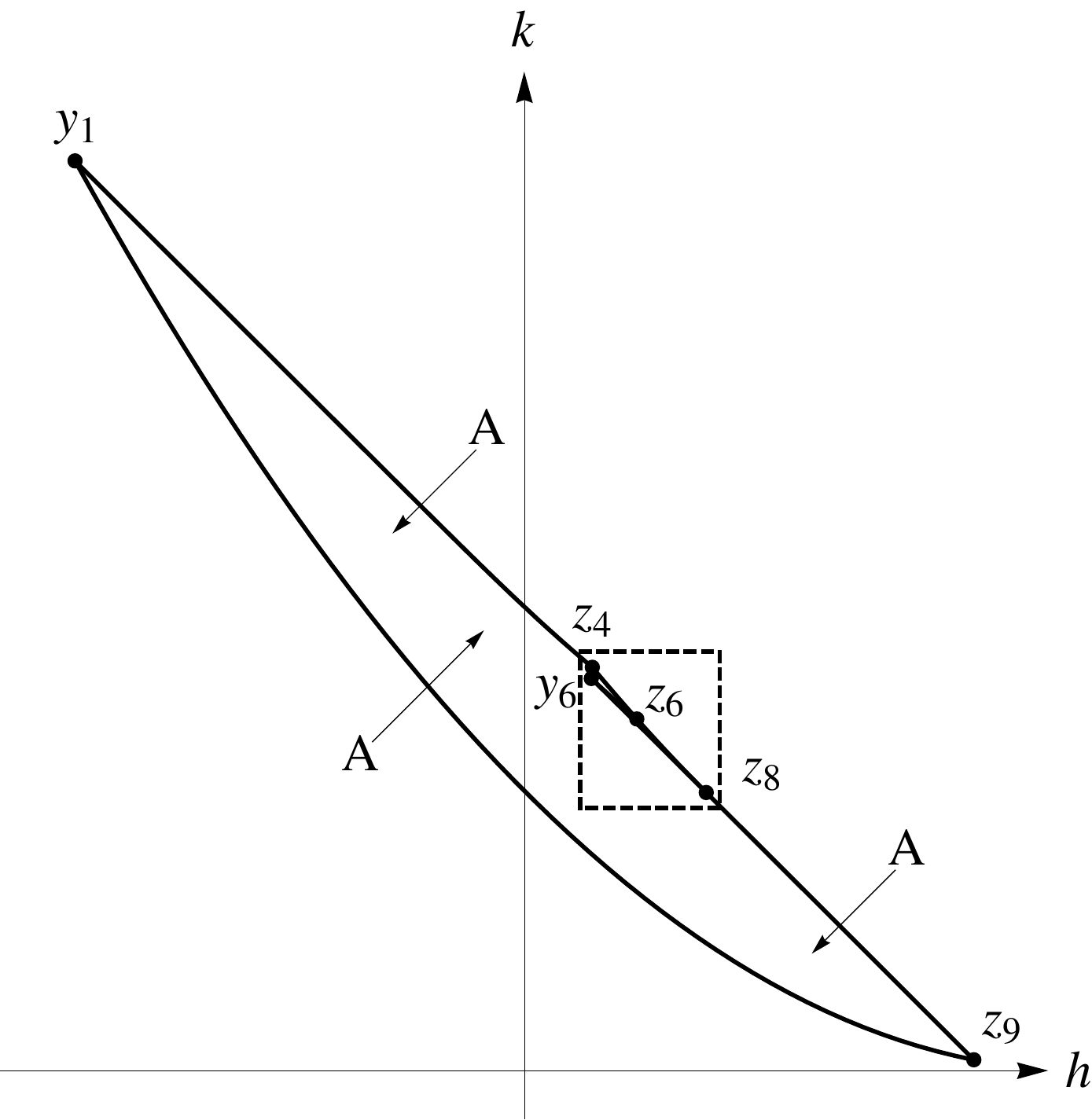}
   \caption{Area \textrm{IX}: $0<b^2 < \varkappa^3 c_1^4, \quad  f_r(b)<a<f_t(b)$}
      \label{Fig:Koval_D_2_A}
\endminipage
\end{figure}

\begin{figure}[h!]
    \centering
    \includegraphics[width=0.48\textwidth]{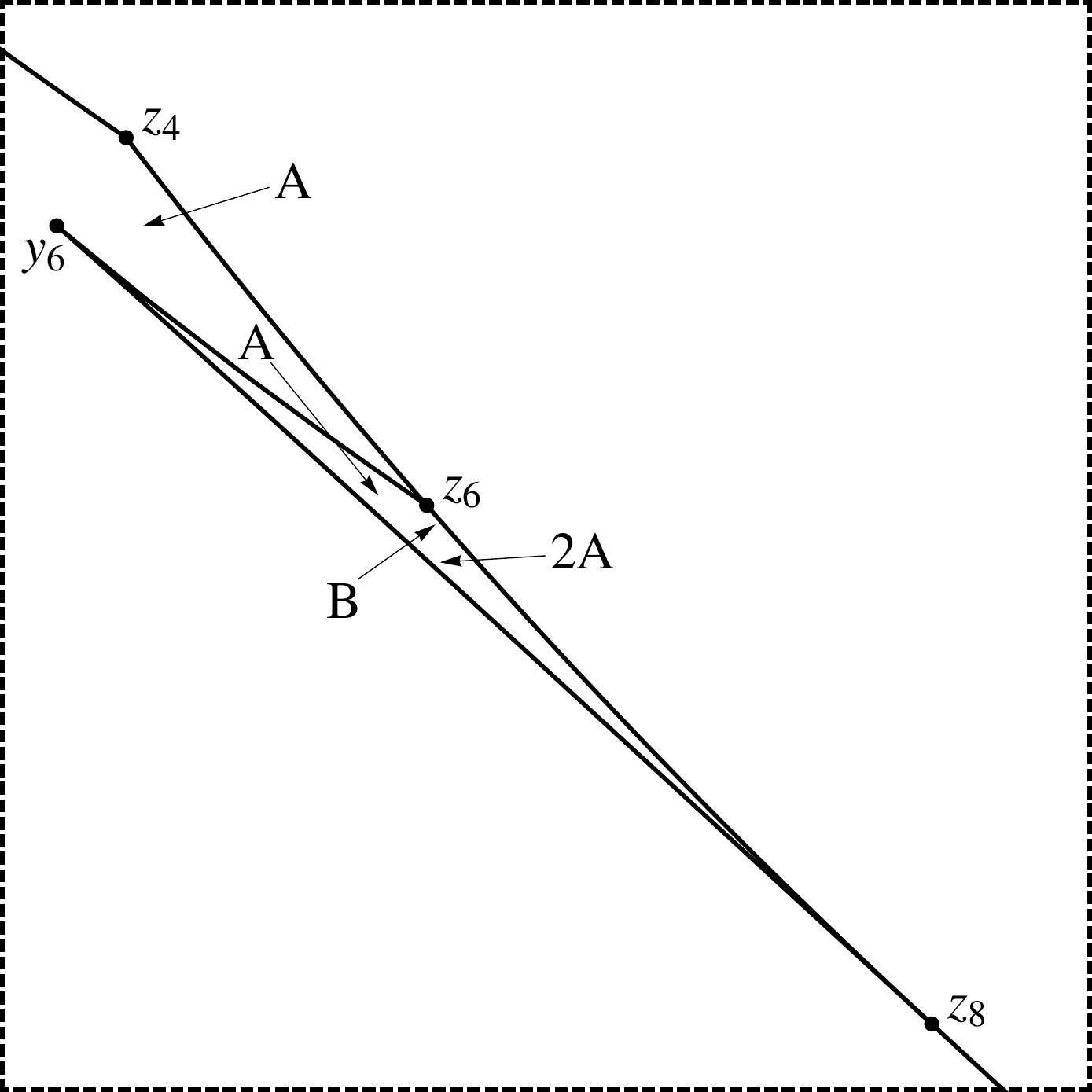}
   \caption{Area \textrm{IX}: an enlarged fragment of Fig.
   \ref{Fig:Koval_D_2_A}}
      \label{Fig:Koval_D_2_B}
\end{figure}

\end{theorem}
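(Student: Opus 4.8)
The plan is to start from Lemma~\ref{L:Curve_Images_B_not_0_Kappa_not_0}, which already tells us that the bifurcation diagram $\Sigma_{h,k}$ is contained in the union of the line $k=0$, the parametric curve \eqref{Eq:Parametric Curve} and the two parabolas \eqref{Eq:Left Parabola}, \eqref{Eq:Right Parabola}. The whole content of the theorem is therefore to decide \emph{which} sub-arcs of these three families actually belong to $\Sigma_{h,k}$, and to organise the answer according to the value of $(a,b)$. Throughout I would fix $\varkappa=1$ and $c_1=1$ using the scaling recorded after \eqref{Eq:First_Integral}, restoring homogeneity only at the end.

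The guiding principle is compactness. For $\varkappa>0$ and $a>2\sqrt{\varkappa}b$ the orbit $M_{a,b}$ is diffeomorphic to $\mathbb{S}^2\times\mathbb{S}^2$, so the momentum map $F=(H,K)$ has compact image, the boundary of that image consists entirely of critical values, and over each connected component of $\mathbb{R}^2(h,k)\setminus\Sigma_{h,k}$ the number of connected components of a regular fibre (the number of Liouville tori) is constant. Hence $\Sigma_{h,k}$ is determined once we know (i) the candidate arcs (Lemma~\ref{L:Curve_Images_B_not_0_Kappa_not_0}), (ii) the rank-$0$ vertices together with their local structure (Lemma~\ref{L:RankZeroType}), and (iii) for each candidate arc, whether its preimage actually contains critical points of $F$. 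The jump of the number of Liouville tori across a candidate arc is the main tool for step (iii): where this number changes the arc certainly belongs to $\Sigma_{h,k}$, and in the remaining cases one checks directly, via the parametrisation of the critical set, whether critical points are present; arcs whose preimage carries no critical points are discarded.

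Carrying this out requires first a purely planar analysis of the three families as $(a,b)$ varies. I would locate the self-intersections of the parametric curve, its intersections and tangencies with the two parabolas, the tangencies of the parabolas with $k=0$ at their vertices, and the points where all these arcs meet the line $k=0$, tracking how many such special points exist and how they are ordered. The loci in the $(a,b)$-plane at which this combinatorial picture changes — a tangency is created or destroyed, or three arcs pass through one point — are precisely the curves $f_l,f_t,f_k,f_r,f_m$: here $f_l=2\sqrt{\varkappa}|b|$ is the degeneration boundary of the orbit, $f_t$ marks a triple point of the diagram, and $f_k,f_r,f_m$ are the tangency and transition thresholds obtained by imposing the corresponding coincidence conditions on \eqref{Eq:Parametric Curve}--\eqref{Eq:Right Parabola}. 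Comparing the five functions of $b$ in the two regimes $b^2>\varkappa^3c_1^4$ and $0<b^2<\varkappa^3c_1^4$ (an elementary but careful ordering of elementary functions, which is also what forces the split between Figs.~\ref{Fig:Areas_Big} and \ref{Fig:Areas_Small}) then yields exactly the $9$ areas.

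Finally, for a representative point of each of the $9$ areas I would assemble the diagram from the surviving candidate arcs using (iii), confirming in every case that the remaining rank-$0$ vertices agree with Lemma~\ref{L:RankZeroType} and that the corresponding figure among \ref{Fig:Koval_U_1_A}--\ref{Fig:Koval_D_2_B} is reproduced. I expect the main obstacle to be exactly this selection step together with the analysis of the transition loci: across the tangency curves $f_k,f_r,f_m$ and the triple-point curve $f_t$ the arcs appear, merge, or switch between boundary type and interior type, and keeping track of which sub-arc survives — and of the torus count on either side — in all nine configurations is where the genuine bookkeeping lies. The proof that the $9$ cases are both distinct and exhaustive rests entirely on the ordering comparison of $f_l,f_t,f_k,f_r,f_m$, so that computation must be carried out with care.
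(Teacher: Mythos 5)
Your overall architecture matches the paper's: start from Lemma~\ref{L:Curve_Images_B_not_0_Kappa_not_0}, use compactness of the orbits $M_{a,b}$, feed in the rank-$0$ data of Lemma~\ref{L:RankZeroType}, carry out the planar analysis of the five threshold functions to obtain the $9$ areas (this is exactly Section~\ref{SubS:Bif_Diag_B_Not_Zero}, Assertions~\ref{A:Parametric_Curve_Description}--\ref{A:Functions_Interposition} and Lemma~\ref{L:Bif_Diag_Weak}), and then select which candidate arcs survive. The gap is in your selection step (iii), and it sits precisely where the paper has to work hardest. Your primary criterion --- an arc belongs to $\Sigma_{h,k}$ where the number of Liouville tori jumps --- is provably silent for the most delicate arcs: the arc $y_8 z_2$ carries the bifurcation $2A^*$ (two period-doubling circles), which transforms two tori into two tori, so the torus count is the same on both sides. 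Your fallback, ``check directly via the parametrisation of the critical set whether critical points are present,'' is exactly the question the paper does \emph{not} resolve by computation: Assertion~\ref{A:Rank1Series2_2Cirles} only establishes a dichotomy (the preimage of a point of the arc is either empty or consists of two critical circles interchanged by $\sigma_3$), and deciding which alternative holds is done by a two-sided continuity argument --- openness from the stability of $A^*$ bifurcations, closedness from the compactness of the orbits and the closedness of the critical set in $\mathbb{R}^7(\mathbf{J},\mathbf{x},\varkappa)$ --- anchored in the region $a>f_m(b)$ where the saddle--saddle point $y_3$ forces the arc to exist. A companion argument (connectedness of isoenergetic surfaces near the tangency value $h_0$) is needed to exclude the alternative $C_2$, and while that distinction formally belongs to Theorem~\ref{T:Bifurcations}, the existence question for the arc itself cannot be separated from it in your scheme.

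Two smaller omissions of the same nature: first, discarding the ``curvilinear triangles'' (e.g.\ $y_{12}y_{13}P$ and $z_1z_2R$ in area I) is done in the paper not by torus counts but by the local-structure criterion of Assertion~\ref{A:Trans_Diagram_Rank_0} --- if the diagram had a corner of center--center or center--saddle shape there, a rank-$0$ point would have to sit in the preimage, contradicting Lemma~\ref{L:RankZeroType}. Second, your jump criterion presupposes that the torus counts in the chambers are known, but these counts are themselves ambiguous in places (for the point $y_{10}$ there are a priori $4$, $2$ or $0$ tori to its left); the paper resolves this by deforming $(a,b)$ into a neighbouring area where a saddle--saddle vertex pins the count down, and then invoking constancy of the count within each of the $9$ areas. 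Without this deformation-in-parameters mechanism your bookkeeping cannot get started, so you should either add it explicitly or replace the hand-waved ``direct check'' by the paper's open--closed argument.
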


Recall that the bifurcation diagram for an orbit $M_{a, b}$ with
$b<0$ coincides with the bifurcation diagram for the orbit $M_{a,
-b}$ (see Remark~\ref{R:minusB}).

\begin{remark}
In Fig. \ref{Fig:Koval_U_1_A} -- \ref{Fig:Koval_D_2_B} the arcs $y_8
z_2, z_2 z_1, y_5 z_3, z_8 z_9, z_8 z_{11}$ and $z_9 z_{11}$ belong
to the parametric curve \eqref{Eq:Parametric Curve}. The rest of the
arcs of the bifurcation diagrams distribute between the curves in an
obvious way.
\end{remark}

\begin{remark}

In this paper $9$ areas of the plane $\mathbb{R}^2 (a, b)$ are
numbered with Roman numerals \textrm{I}--\textrm{IX} as it is shown
in Fig. \ref{Fig:Areas_Big} and \ref{Fig:Areas_Small} (the relative
positioning of the graphs of functions $f_k, f_r, f_m, f_t$ and
$f_l$ is also described in Assertion
\ref{A:Functions_Interposition}). We continue the numbering on the
areas of the line $b=0$ as follows:

\begin{itemize}

\item area \textrm{X}: $\{ b=0, \quad  \varkappa^2 c_1^2 < a \}$;

\item area \textrm{XI}: $\{ b=0, \quad  \frac{\varkappa^2 c_1^2}{4} < a < \varkappa^2
c_1^2 \}$;

\item area \textrm{XII}: $\{ b=0, \quad  0<a<  \frac{\varkappa^2 c_1^2}{4} \}$;

\end{itemize}

\end{remark}

A detailed description of the relative positioning of the curves
that contain the bifurcation diagrams of the momentum mapping are
given in Section \ref{SubS:Bif_Diag_B_Not_Zero}. The proof of
Theorem \ref{T:Bif_Diag} is given in Section
\ref{SubS:Proof_Main_Theorems}. In fact, in order to prove Theorem
\ref{T:Bif_Diag} it suffices to know the types of critical points of
rank $0$, which are described in the following lemma (for the
definition of nondegenerate critical points of rank $0$ and their
types see, for example, \cite{BolsinovFomenko99}).

\begin{lemma} \label{L:RankZeroType}
Let $\varkappa>0$ and $b>0$. Then the image of critical points of
rank $0$ is contained in the union of the following three families
of points.
\begin{enumerate}

\item \label{I:CritRank0Image_2Parab} The point of intersection of the parabolas \eqref{Eq:Left Parabola} and
\eqref{Eq:Right Parabola} (the point $z_5$ in
Fig.~\ref{Fig:Koval_U_5_A}). It has coordinates \[ h= \varkappa
c_1^2 + \frac{a}{\varkappa}, \quad k= \frac{a^2 - 4 \varkappa
b^2}{\varkappa^2}.\] If $a> f_m(b)$, where the function $f_m(b)$ is
given by the formula \eqref{Eq:Function_F_m}, then there are two
critical points of rank $0$ in the preimage of the point on the
orbit $M_{a, b}$. If $a=f_m(b)$, then there is one critical point of
rank $0$ in the preimage and if $a<f_m(b)$, then there is no
critical points of rank $0$ in the preimage. If $a>f_m(b)$, then all
critical points from this series are nondegenerate critical points
of saddle-center type.

\item \label{I:CritRank0Image_ParamParab}
The point of intersection of the parabolas \eqref{Eq:Left Parabola},
\eqref{Eq:Right Parabola} and the curve $\eqref{Eq:Parametric
Curve}$. The corresponding values of the parameter $z$ of the curve
$\eqref{Eq:Parametric Curve}$ are given by the equation \[z^2 =
\frac{a \pm \sqrt{a^2-4 \varkappa b^2 }}{2}c_1^2.\]

For each non-singular orbit $M_{a, b}$ there is exactly one critical
point of rank $0$ in the preimage of each intersection point of the
curve \eqref{Eq:Parametric Curve} with the parabolas. Both critical
points such that $z<0$ (that is the points $y_1$ and $z_4$ in Fig.
\ref{Fig:Koval_U_1_A}--\ref{Fig:Koval_Z_3_B}) have center-center
type.

The types of the remaining points are given in Table
\ref{Tab:Types_ParamParab}. Here $z_{+l}$ and $z_{+r}$ are the
remaining points of intersection of the curve \eqref{Eq:Parametric
Curve} with the left parabola \eqref{Eq:Left Parabola} and the right
parabola \eqref{Eq:Right Parabola} respectively and the functions
$f_r(b)$, $f_m(b)$ and $f_t(b)$ are given by the formulas
\eqref{Eq:Function_F_r}, \eqref{Eq:Function_F_m} and
\eqref{Eq:Triple_Intersect} respectively. (In Fig.
\ref{Fig:Koval_U_1_A}--\ref{Fig:Koval_D_2_B} the point $z_{+r}$ is
denoted by $z_3$, $z_6$ or $z_{11}$ depending on the type of the
point. The point $z_{+l}$ is denoted by $y_3$, $y_7$, $y_{12}$,
$z_9$ or $z_{10}$.)

\begin{table}[h]
\centering
\begin{tabular}{| l | c || c | c |}
\hline
& & $0< b^2 < \varkappa^3 c_1^4$ & $b^2> \varkappa^3 c_1^4$ \\
\hline
$a>f_m(b)$ & $z_{+r}$ & center-center & center-center \\
& $z_{+l}$ & saddle-saddle & saddle-saddle \\
\hline
$f_m(b)>a>f_t(b)$& $z_{+r}$ & center-center & center-saddle \\
$a \ne f_r(b)$ & $z_{+l}$ & center-saddle & saddle-saddle \\
\hline
$f_t(b)>a>f_l(b)$& $z_{+r}$ & center-center & center-saddle \\
$a \ne f_r(b)$ & $z_{+l}$ & center-center & center-saddle \\
\hline
\end{tabular}\caption{Types of intersections of the curve \eqref{Eq:Parametric Curve} and the parabolas \eqref{Eq:Left Parabola} and \eqref{Eq:Right Parabola}.}
  \label{Tab:Types_ParamParab}
\end{table}

\item \label{I:CritRank0Image_ParamK0} The points of intersection of the curve \eqref{Eq:Parametric Curve} and the line
$k=0$ (the points $y_{10}$, $y_{11}$ and $z_1$ in Fig.
\ref{Fig:Koval_U_1_A}--\ref{Fig:Koval_D_2_B}). In the preimage of
each point of intersection lying to the right of the point
\[h= \varkappa c_1^2 + \frac{a}{\varkappa} - \frac{\sqrt{a^2 - 4 \varkappa b^2}}{\varkappa}
\] (that is, to the right of the point of intersection of the parabola \eqref{Eq:Right Parabola}
and the line $k=0$), there are exactly two critical points of rank
$0$ and the types of these two points coincide. The preimages of all
other points of intersection are empty. In other words,

\begin{itemize}

\item if $0< b^2< \varkappa^3 c_1^4$ and $a<f_t(b)$, where the function $f_t(b)$
is given by the formula \eqref{Eq:Triple_Intersect}, then there is
no critical points of rank $0$ from this series in the preimage;

\item if $a> f_t(b)$ and $a> f_k(b)$, where $f_k(b)$ is given by the formula
\eqref{Eq: Function_F_K}, the there are $2$ point from this series
on the orbit;

\item if $f_k(b)> a>f_t(b)$ (it is possible only if $b^2> \varkappa^3 c_1^4$), then the preimage contains $6$ points from this series.

\item if $b^2> \varkappa^3 c_1^4$ and $a< f_t(b)$, then the preimage contains $4$ points from this series.

\end{itemize}

The points corresponding to the points of intersection with the
parameter $z>z_{\textrm{cusp}} = \sqrt[3]{b^{2} c_1^{2}}$, that is
with the parameter greater than the parameter of the cusp of the
curve \eqref{Eq:Parametric Curve} (that is the points   $y_{10}$ and
$z_1$ in Fig. \ref{Fig:Koval_U_1_A}--\ref{Fig:Koval_M_2}), have
center-center type. If  $a \ne f_t(b)$, then the points
corresponding to the points of intersection with the parameter
$z<z_{\textrm{cusp}} = \sqrt[3]{b^{2} c_1^{2}}$  (that is the point
$y_{11}$ in Fig. \ref{Fig:Koval_U_2_A}--\ref{Fig:Koval_U_2_C}) have
center-saddle type.

\end{enumerate}

\end{lemma}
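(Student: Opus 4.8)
The plan is to find the rank-$0$ points as the common critical points of $H$ and $K$ on the orbit $M_{a,b}$, i.e. the points at which both $dH$ and $dK$ lie in the span of the differentials $df_1,df_2$ of the Casimirs \eqref{Eq:Kasimirs}. Writing $A=J_1^2-J_2^2-2c_1x_1+\varkappa c_1^2$ and $B=2J_1J_2-2c_1x_2$, so that $K=A^2+B^2$, I would first record the system
\[ dH=\mu_1\,df_1+\mu_2\,df_2,\qquad dK=\lambda_1\,df_1+\lambda_2\,df_2,\qquad f_1=a,\quad f_2=b \]
as twelve scalar equations in the coordinates $(J_i,x_j)$ and the real multipliers $\mu_1,\mu_2,\lambda_1,\lambda_2$. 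The crucial observation is that $A$ and $B$ do not involve $J_3,x_3$, so the $dJ_3$- and $dx_3$-components of $dK$ vanish identically; this drives a reduction whose form depends on whether $K=0$ or $K\neq0$.

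In the regime $K\neq0$ we have $dK\neq0$, hence $(\lambda_1,\lambda_2)\neq(0,0)$, and the equations $0=2\varkappa\lambda_1 J_3+\lambda_2 x_3$, $0=2\lambda_1 x_3+\lambda_2 J_3$ force $x_3^2=\varkappa J_3^2$ whenever $(J_3,x_3)\neq(0,0)$; substituting this into the $dH$-components $4J_3=2\varkappa\mu_1 J_3+\mu_2 x_3$ and $0=2\mu_1 x_3+\mu_2 J_3$ (the coefficient $4$ coming from the term $2J_3^2$ in $H$) gives $2\mu_1(\varkappa J_3^2-x_3^2)=4J_3^2$, whence $J_3=0$. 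Thus every rank-$0$ point with $K\neq0$ lies in $\{J_3=x_3=0\}$, where I would pass to $w=J_1+iJ_2$, $\zeta=x_1+ix_2$, in which
\[ H=|w|^2+c_1(\zeta+\bar\zeta),\quad f_1=|\zeta|^2+\varkappa|w|^2,\quad f_2=\operatorname{Re}(\bar w\zeta),\quad K=\bigl|w^2-2c_1\zeta+\varkappa c_1^2\bigr|^2. \]
Reality of the multipliers turns the $\partial_w$- and $\partial_\zeta$-parts of the $H$-equation into $(1-\varkappa\mu_1)w=\tfrac12\mu_2\zeta$ and $\mu_1\zeta+\tfrac12\mu_2 w=c_1$, which imply that either $\operatorname{Im}(\bar w\zeta)=0$, so $w,\zeta$ are real-proportional and $f_2=b>0$ forces $w\in\mathbb{R}$, giving the four points on the axis $\{J_2=x_2=J_3=x_3=0\}$ determined by $\varkappa|w|^4-a|w|^2+b^2=0$ (family~\ref{I:CritRank0Image_ParamParab}, with $z=c_1x_1$), or $\mu_2=0$, $\mu_1=\varkappa^{-1}$, forcing $\zeta=\varkappa c_1$, $J_1=b/(\varkappa c_1)$ and $J_2^2=a/\varkappa-\varkappa c_1^2-b^2/(\varkappa^2c_1^2)$ (family~\ref{I:CritRank0Image_2Parab}, the point $z_5$). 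Substituting into $(H,K)$ produces the stated coordinates of $z_5$ and shows $J_2^2\geq0$ exactly when $a\geq f_m(b)$, which yields the $0/1/2$ alternative and the merging of the pair onto the axis at $a=f_m(b)$.

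In the dual regime $K=0$ we have $P:=w^2-2c_1\zeta+\varkappa c_1^2=0$, hence $dK\equiv0$; rank $0$ now means only $A=B=0$ together with $dH\in\operatorname{span}(df_1,df_2)$, and the $J_3,x_3$-directions are unconstrained. Solving the $dH$-equations shows that $A=B=0$ forces $J_2=x_2=0$ (the case $J_1\neq0,\,J_2\neq0$ leads through $x_1=\varkappa c_1$ to $J_1=0$, a contradiction, and $J_1=0$ gives $J_2^2<0$), so every such point lies in $\{J_2=x_2=0\}$ with $J_3,x_3$ active; these are the points of family~\ref{I:CritRank0Image_ParamK0}, occurring in $\sigma_3$-pairs of equal image and type, their number ($2$, $4$ or $6$) being the count of real roots of the resulting polynomial, with thresholds $f_t(b)$, $f_k(b)$ and the cusp parameter $z_{\mathrm{cusp}}=\sqrt[3]{b^2c_1^2}$ separating the two sign regimes. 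To read off the types I would linearise: every rank-$0$ point is fixed by $\sigma_3$ (families~\ref{I:CritRank0Image_2Parab},~\ref{I:CritRank0Image_ParamParab}) or by $\sigma_2$ (family~\ref{I:CritRank0Image_ParamK0}); since $H,K,f_1,f_2$ are invariant and the $\pm1$-eigenspaces of a symplectic involution are symplectically orthogonal, the linearised operator $A_H$ splits into two $2$-dimensional Hamiltonian blocks, each a centre or a saddle according as the corresponding constrained Hessian of $H$ is definite or indefinite. This excludes focus-focus and reduces each entry of Table~\ref{Tab:Types_ParamParab} and of parts~\ref{I:CritRank0Image_2Parab},~\ref{I:CritRank0Image_ParamK0} to the sign of a single $2\times2$ determinant.

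The main obstacle is this last step. Computing the block determinants requires the second-order Casimir correction (a bordered Hessian), since the first-order parts of $df_1,df_2$ vanish in the transverse directions $\partial_{J_3},\partial_{x_3}$ (respectively $\partial_{J_2},\partial_{x_2}$); and once the determinants are expressed as functions of $(a,b)$ one must verify that their sign changes occur exactly along $a=f_m(b)$, $a=f_t(b)$ and $a=f_r(b)$, so that the type is constant on each region \textrm{I}--\textrm{IX} and the points are nondegenerate off these curves. This sign bookkeeping, carried out region by region and matched against the interposition of $f_k,f_r,f_m,f_t,f_l$, is lengthy but entirely mechanical once the reduction to the two planes $\{J_3=x_3=0\}$ and $\{J_2=x_2=0\}$ and the complex form of the momentum map are in place.
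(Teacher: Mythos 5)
Your localisation of the rank-$0$ points is correct, but it is organised quite differently from the paper. The paper (Assertion \ref{A:Rank_0_Preimage}) simply writes out the six components of $X_H$, reads off $x_2=0$ together with ``$J_2=0$ or $J_3=x_3=0$'', and finishes by exhaustion; you instead split into the regimes $K\neq 0$ and $K=0$, use the vanishing of the $dJ_3,dx_3$-components of $dK$ to force $\varkappa J_3^2=x_3^2$ and then $J_3=x_3=0$, and treat the plane $\{J_3=x_3=0\}$ in complex coordinates. I checked this reduction and it works; the only slips are in attribution of sub-cases (it is the equation $\mu_1\zeta+\tfrac12\mu_2 w=c_1\in\mathbb{R}\setminus\{0\}$, not the condition $f_2=b>0$, that forces $w$ to be real, and in your $K=0$ analysis it is the sub-case $J_1=0$, $J_2\neq0$ that passes through $x_1=\varkappa c_1$, while $J_1\neq 0$, $J_2\neq 0$ dies on $x_1=J_1^2/c_1$ and the $x_1$-equation). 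The second genuine difference is the type determination: the paper computes the full $6\times6$ Poisson linearization, strips two zeros via Assertion \ref{A:HamVectorLinear_Poisson}, and reads the types from the explicit spectra of Assertion \ref{A:Spectr_Ham_Rank0}, whereas you split the leaf tangent space into two $\omega$-orthogonal invariant $2$-planes using the involutions $\sigma_2,\sigma_3$. That splitting is legitimate (each block is trace-free, so has eigenvalues $\pm\sqrt{-\det}$, hence real or purely imaginary), and it rules out focus-focus a priori, which the paper obtains only a posteriori from its formulas; the price is the bordered-Hessian computation you correctly identify.

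There are, however, two genuine gaps. First, nondegeneracy. Reducing each table entry to ``the sign of a single $2\times2$ determinant'' of $A_H$ determines the eigenvalue type of $A_H$, but it cannot establish nondegeneracy: by Assertion \ref{A:NonDegen_Criterium_Rank_0} one also needs $A_H$ and $A_K$ to be linearly independent, and a point can be degenerate even when $A_H$ has four distinct non-zero eigenvalues (for instance if $A_K$ is proportional to $A_H$ on the leaf). Since the Lemma explicitly asserts nondegeneracy, your scheme must also bring $A_K$ (or a combination $K+\lambda H$, as the paper does using the multipliers of Assertion \ref{A:Rank1_Coeff_Spectr}) into the block computation; your symmetry framework does apply to $A_K$, since $K$ is $\sigma_2$- and $\sigma_3$-invariant, but this step is absent. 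Second, the $0/2/4/6$ count for family \ref{I:CritRank0Image_ParamK0} is asserted rather than derived: you never exhibit the polynomial system, nor the argument that the count can change only when the polynomial acquires a multiple root (which happens exactly on $a=f_k(b)$ or $a=2\sqrt{\varkappa}\,b$) or when a root crosses the endpoint of the admissible interval (which happens exactly on $a=f_t(b)$). This is precisely the content of the paper's analysis of the quintic \eqref{Eq:ParamK0_Simple5} under the constraint \eqref{Eq:ParamK0_SimpleCond}, and without it this part of your proposal restates the claim instead of proving it.
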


The proof of Lemma \ref{L:RankZeroType} is given in Section
\ref{SubS:Critical_Points_Zero_Rang}.

As it turned out, in the case under consideration the obtained
information about the types of critical points allows us not only to
construct the bifurcation diagrams of the momentum mapping but also
to determine the types of bifurcations of Liouville tori and the
loop molecules for singular points of the momentum mapping. Just as
the proof of Theorem \ref{T:Bif_Diag}, the proofs of Theorems
\ref{T:Bifurcations} and \ref{T:Molecules} are given in Section
\ref{SubS:Proof_Main_Theorems}.

Just as in the classical Kovalevskaya case there are only four types
of bifurcation of tori corresponding to the smooth regular arcs of
the bifurcation diagram. In the terminology from
\cite{BolsinovFomenko99} they correspond to the atoms $A$, $A^*$,
$B$ and $C_2$.

\begin{theorem} \label{T:Bifurcations} In Fig.
\ref{Fig:Koval_U_1_A}--\ref{Fig:Class_Koval_1} for each bifurcation
diagram of the momentum mapping the bifurcations of Liouville tori
corresponding to different arcs of the bifurcation diagrams are
specified and all singular points of the bifurcation diagrams are
marked. \end{theorem}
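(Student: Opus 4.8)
The plan is to determine the bifurcation of Liouville tori along each smooth arc of every bifurcation diagram by combining the local information about rank-$0$ critical points (Lemma~\ref{L:RankZeroType}) with the global structure of the momentum map on the compact orbits $M_{a,b} \cong \mathbb{S}^2 \times \mathbb{S}^2$. The key observation is that since the orbits are compact and four-dimensional, the momentum mapping $\mathcal{F} = (H, K)$ is proper, so the preimage of any point is compact and the preimage of a regular value is a disjoint union of Liouville tori. The bifurcations that can occur along the rank-$1$ arcs are highly constrained: by the general theory in \cite{BolsinovFomenko99}, the possible atoms are determined by the types of the rank-$0$ points sitting at the endpoints of each arc and by the count of tori on either side of the arc. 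As stated before the theorem, only the four atoms $A$, $A^*$, $B$, and $C_2$ can appear, so the task reduces to deciding which of these four occurs on each arc.

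The first step is to set up the \emph{local models at the rank-$0$ points}. For each nondegenerate rank-$0$ singularity, the type (center-center, center-saddle, saddle-center, or saddle-saddle) given in Lemma~\ref{L:RankZeroType} dictates the local product structure of the Liouville foliation: a center factor contributes an $A$-atom and a saddle factor contributes a $B$- or $C_2$-type singularity on the adjacent arcs. In particular, an arc emanating from a center-center point must be an $A$ bifurcation, while arcs touching saddle-saddle or center-saddle points carry the saddle-type atoms. I would use the symmetries $\sigma_2$ and $\sigma_3$ to reduce the number of independent arcs that must be analyzed, since these symmetries preserve the foliation and permute families of critical circles.

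The second step is to \emph{count Liouville tori} on each connected region of the complement of the bifurcation diagram. Because $M_{a,b}$ is compact and the atoms are locally standard, one can track how the number of tori changes as an admissible path crosses a given arc: an $A$-atom changes the count by the appearance/disappearance of one torus, a $B$-atom merges/splits tori with the standard figure-eight degeneration, and $C_2$ is the distinguished atom that appears in the classical Kovalevskaya case. The anchoring data is that on the ``outermost'' region (large $H$, or the region outside all curves) the preimage is empty or a single torus, which fixes the counts by propagation. I expect to distinguish $B$ from $C_2$ arcs precisely as in the classical case: the $C_2$ atom is characterized by the local structure near the specific arcs bounding the Kovalevskaya ``chamber'', and its presence is detected by the singular circle carrying the critical $K$-value along the relevant part of the parametric curve \eqref{Eq:Parametric Curve}.

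The main obstacle will be verifying, arc by arc across all nine areas \textrm{I}--\textrm{IX}, that the local data from the endpoints is consistent with a single global atom assignment and the torus counts, especially near the triple intersection points governed by $f_t(b)$ and the transition values $f_r(b)$, $f_k(b)$, $f_m(b)$ where the qualitative picture of the diagram changes. At these transition parameters the relative positions of arcs degenerate, so I would argue by continuity: the bifurcation type along the interior of an arc is locally constant and can only change at a singular point of the diagram, hence it suffices to identify the atom on one representative sub-arc in each area and propagate. The labor-intensive but routine part is checking each figure; the conceptual crux is confirming that no atom beyond $A$, $A^*$, $B$, $C_2$ is forced, which follows from the nondegeneracy established in Lemma~\ref{L:RankZeroType} together with the standard list of admissible $2$-atoms.
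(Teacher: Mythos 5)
Your skeleton (compactness of the orbits, rank-$0$ types from Lemma~\ref{L:RankZeroType} anchoring the adjacent arcs, torus counting, continuity in $(a,b)$, stability of atoms) matches the paper's strategy, but there is a genuine gap at exactly the crux: distinguishing $C_2$ from $2A^*$ (and pinning down $2B$) on the arcs whose preimages consist of two critical circles, such as $y_8 z_2$ and $y_8 y_{13}$. None of the tools you list can make this distinction. Both $C_2$ and a disjoint union of two $A^*$ atoms transform two tori into two tori and contain exactly two critical circles, so torus counts and circle counts on either side of the arc are identical; and these arcs terminate at the \emph{degenerate} rank-$1$ points $y_8$, $z_2$, $y_{13}$, not at nondegenerate rank-$0$ points, so no endpoint type forces a choice. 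Your statement that $C_2$ ``is detected by the singular circle carrying the critical $K$-value along the relevant part of the parametric curve'' is not an argument, and appealing to ``precisely as in the classical case'' would actually give the \emph{wrong} answer here: in the classical case ($\varkappa=0$) the analogous arcs do carry $C_2$, while on $\textrm{so}(4)$ the paper shows the arc $y_8 z_2$ carries $2A^*$. The paper's resolution needs two ingredients you are missing: (i) Assertion~\ref{A:Rank1Series2_2Cirles}, showing that over the relevant part of the curve \eqref{Eq:Parametric Curve} the critical set is two circles interchanged by the symmetry $\sigma_3\colon (J_3,x_3)\to(-J_3,-x_3)$; and (ii) a connectivity argument: near the tangency value $h_0$ of the right parabola \eqref{Eq:Right Parabola} with the curve \eqref{Eq:Parametric Curve} the isoenergetic surface $H=\textrm{const}$ is disconnected (its rough molecule is two copies of $A-A$), whereas a $C_2$ bifurcation would force it to be connected. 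Without something equivalent, the atom assignment on these arcs is undetermined.

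Two secondary gaps. First, you take the four-atom list $A$, $A^*$, $B$, $C_2$ as given and attribute it to the nondegeneracy in Lemma~\ref{L:RankZeroType}; but nondegeneracy of rank-$0$ points says nothing about the rank-$1$ arcs --- what is needed is the Bott property of Assertion~\ref{A:NonDegen_Crit_Rank_1} together with the circle counts, and in the paper the four-atom list is a conclusion of the arc-by-arc analysis, not an a priori constraint. Second, part of the theorem is that certain arcs (again $y_8 z_2$) actually belong to the bifurcation diagram for \emph{all} parameters in a given area; the paper proves this by an open--closed argument (openness from stability of $A^*$ under perturbation of $(a,b)$, closedness from compactness of the orbits and closedness of the critical set in $\mathbb{R}^7(\mathbf{J},\mathbf{x},\varkappa)$), which your continuity remark gestures at but does not supply.
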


The singular points of the bifurcation diagrams are denoted by
$y_1$--$y_{13}$ and $z_1$--$z_{11}$. These points correspond to the
cusps, the points of intersection and tangency of the bifurcation
curves that form the bifurcation diagram of the momentum mapping
(recall that these curves are described in Lemma
\ref{L:Curve_Images_B_not_0_Kappa_not_0}).

The singular points $y_1, y_3, y_7, y_{10}$--$y_{12}$, $z_1,
z_3$--$z_7$ and $z_9$--$z_{11}$ correspond to nondegenerate
singularities of the momentum mapping $\mathcal{F}= (H, K): M^4_{a,
b} \to \mathbb{R}^2$. The points marked with the same letters
correspond to singularities of the same type. The types of these
nondegenerate singularities are given in Lemmas \ref{L:RankZeroType}
and \ref{L:RankZeroType_BZero}.

The points $y_2, y_4, y_5, y_6, y_8, y_9, y_{13}, z_2$ and $z_8$
correspond to degenerate one-dimensional orbits of the action of the
group $\mathbb{R}^2$ on $M_{a, b}$ generated by the Hamiltonian
\eqref{Eq:Hamiltonian} and the integral \eqref{Eq:First_Integral}.
(In this paper we do not determine the types of critical points of
rank $1$, but it is possible to make an assumption about their types
--- most likely they are typical singularities described in
\cite{BolsinovFomenko99}. The type of a singularity can be easily
guessed from its loop molecule.) The next statement ensures that the
singularities in the preimage of all other points of the bifurcation
diagrams are nondegenerate.

\begin{assertion} \label{A:NonDegen_Crit_Rank_1}
The considered integrable Hamiltonian systems with Hamiltonian
\eqref{Eq:Hamiltonian} and \eqref{Eq:First_Integral} on all
non-singular orbits $M_{a,b}$ of the Lie algebras $so(4)$, $e(3)$
and $so(3, 1)$ are of Bott type. In other words, for all
non-singular values of the parameters $a$ and $b$ all critical
points in the preimage of non-singular points of the bifurcation
diagrams (that is, the points that are not cusps or points of
intersection or tangency for the smooth arcs of the bifurcation
diagrams) are nondegenerate points of rank $1$.
\end{assertion}

The proof of Assertion \ref{A:NonDegen_Crit_Rank_1} is given in
Section \ref{SubS:CritRank1}. Let us emphasize that in this paper
the fact that the systems are of Bott type is proved for all
non-singular orbits of the pencil $so(4)-e(3)-so(3, 1)$.

\begin{remark}
It is not hard to understand the structure of the bifurcation
diagrams for the remaining non-singular orbits that are not shown in
Fig. \ref{Fig:Koval_U_1_A}--\ref{Fig:Class_Koval_1}. For example in
the case $a=f_k(b), \varkappa>0$ the parametric curve
\eqref{Eq:Parametric Curve} intersects the line $k=0$ at the cusp
point. There is no doubt that the critical points of rank $0$ at
which the structure of bifurcation diagrams changes are degenerate.
In section \ref{SubS:Critical_Points_Zero_Rang} during the proof of
Lemma \ref{L:Curve_Images_B_not_0_Kappa_not_0} we actually prove a
more general statement that the remaining points
--- in a neighbourhood of which the bifurcation diagram does not
change its structure when passing from one area of the plane
$\mathbb{R}^2(a, b)$ to another --- remain nondegenerate and their
types do not change.
\end{remark}

\begin{theorem} \label{T:Molecules}
The loop molecules for all singular points of the bifurcation
diagrams shown in Fig.
\ref{Fig:Koval_U_1_A}--\ref{Fig:Class_Koval_1} are listed in Tables
\ref{Tab:Cirle_Mol_Old} and \ref{Tab:Cirle_Mol_New}. The loop
molecules for singular points of the diagrams marked with the same
letters coincide.
\end{theorem}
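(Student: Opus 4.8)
The plan is to compute every loop molecule from purely local data near the corresponding singular point, using that the bifurcation diagram (Theorem~\ref{T:Bif_Diag}), the atoms along its regular arcs (Theorem~\ref{T:Bifurcations}) and the Bott property (Assertion~\ref{A:NonDegen_Crit_Rank_1}) are already established. For a singular point $P$ of the diagram $\Sigma_{h,k}$ I would take a small admissible circle $\gamma_P$ encircling $P$; by the Bott property it meets only regular arcs of $\Sigma_{h,k}$, and transversally, so the preimage $\mathcal{F}^{-1}(\gamma_P)$ is a closed orientable $3$-manifold carrying a Liouville foliation with finitely many singular leaves. Each point of $\gamma_P \cap \Sigma_{h,k}$ is one of the bifurcations $A$, $A^*$, $B$, $C_2$ fixed by Theorem~\ref{T:Bifurcations}, so the vertices of the loop molecule and the atoms placed in them can be read off at once from the local configuration of arcs around $P$ depicted in the figures. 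What remains in every case is the gluing graph together with the three families of marks $r$, $\epsilon$ and $n$.

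For the points that are images of nondegenerate rank-$0$ singularities (namely $y_1, y_3, y_7, y_{10}$--$y_{12}$, $z_1, z_3$--$z_7$ and $z_9$--$z_{11}$) I would invoke the local normal form theorem for nondegenerate singularities: the loop molecule of such a point is a standard invariant that depends only on its type, and the type is supplied by Lemma~\ref{L:RankZeroType}. Thus a center-center point yields the trivial loop molecule $A$--$A$, a center-saddle (equivalently saddle-center) point yields the standard center-saddle loop molecule, and a saddle-saddle point yields the standard saddle-saddle loop molecule; all of these are tabulated in \cite{BolsinovFomenko99} and are collected in Table~\ref{Tab:Cirle_Mol_Old}. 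Once the type is known, no further computation is required for these points.

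The substantive part is the degenerate rank-$1$ points $y_2, y_4, y_5, y_6, y_8, y_9, y_{13}, z_2$ and $z_8$, whose loop molecules form the new Table~\ref{Tab:Cirle_Mol_New}. Here I would argue directly: the atoms at the crossings are again given by Theorem~\ref{T:Bifurcations}, and the gluing graph is dictated by how the surviving arcs fit together near $P$ (a cusp of the parametric curve \eqref{Eq:Parametric Curve}, a tangency of two arcs, or a triple intersection). To determine the marks I would fix admissible bases $(\lambda_i, \mu_i)$ of cycles on the boundary tori of each atom and follow the resulting gluing matrices once around $\gamma_P$: the marks $r$ and $\epsilon$ are then extracted from these matrices by the usual conventions, while each mark $n$ comes from the gluing matrices along a maximal family of saddle atoms. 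Wherever the local picture of $\Sigma_{h,k}$ coincides with one already occurring in the classical Kovalevskaya case, I would identify the loop molecule with the one computed in \cite{BolsinovFomenkoRichter00}, transporting the cycle bases through the limit $\varkappa \to 0$; the symmetries $\sigma_2$ and $\sigma_3$ further restrict the admissible cycles and help fix the signs.

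Finally, the statement that points carrying the same label have the same loop molecule reduces to the fact that a loop molecule is determined by the cyclic sequence of arcs met by $\gamma_P$ together with their atom labels and the ensuing gluing data. Since equally labelled points have, by construction, the same local arc configuration, the same atoms (Theorem~\ref{T:Bifurcations}) and --- for rank-$0$ points --- the same singularity type (Lemma~\ref{L:RankZeroType}), their loop molecules coincide, so it suffices to carry out the computation for one representative of each label. I expect the main obstacle to be the explicit evaluation of the marks $r$ and $n$ for the degenerate points: unlike the nondegenerate case these are not dictated by a normal form and require an honest tracking of basis cycles and their monodromy around $P$, which is precisely where the explicit structure of the separatrices and the symmetries of the system must be brought in.
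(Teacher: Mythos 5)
Your overall architecture matches the paper's: nondegenerate rank-$0$ points are handled by the known classification of nondegenerate singularities given their types from Lemma~\ref{L:RankZeroType}, and the real work is concentrated on the degenerate rank-$1$ points $y_2, y_4, y_5, y_6, y_8, y_9, y_{13}, z_2, z_8$. But there are two concrete gaps. First, your claim that for nondegenerate points ``once the type is known, no further computation is required'' is false for the saddle--saddle points: there are exactly four saddle--saddle singularities of complexity $1$, and the word-type does not distinguish them --- indeed $y_3$ and $y_7$ are both saddle--saddle yet have \emph{different} loop molecules in Table~\ref{Tab:Cirle_Mol_Old}. The paper pins down which of the four singularities occurs using the number of tori in the neighbouring cameras (this is done in the proof of Theorems~\ref{T:Bif_Diag} and \ref{T:Bifurcations}); the same caveat applies to center--saddle points, whose molecule depends on which saddle atom occurs. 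You have the data needed to repair this (Theorem~\ref{T:Bifurcations} gives the atoms), but as stated the step is wrong.

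Second, and more seriously, for the degenerate points you assert that the gluing graph ``is dictated by how the surviving arcs fit together near $P$.'' This is exactly what fails for $y_8$ and $y_9$: the cyclic sequence of crossed arcs, their atoms ($2A^*$, $2B$, \dots) and the torus counts are consistent both with a single connected ``cross-linked'' molecule and with a disjoint union of two identical halves. The paper resolves this ambiguity not by local data but by Assertion~\ref{A:Disconn_Deg_Point}: the preimage of a small neighbourhood of $y_8$ has two connected components interchanged by the symmetry $\sigma_3$, so the loop molecule must consist of two identical copies. That this is a genuine pitfall, not a formality, is underlined by the paper's remark that the book \cite{BolsinovFomenko99} itself lists the molecules of $y_8$ and $y_9$ incorrectly (they must be doubled). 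Your fallback --- honest tracking of cycle bases and gluing matrices around $\gamma_P$ --- would in principle settle both the graph and the marks, but it is only declared, not carried out, and it is precisely the computation the paper avoids: the paper's marks are obtained indirectly, by the rule of summation of marks and continuity in the parameters $(a,b,\varkappa)$, with the known classical-case molecules of \cite{BolsinovFomenkoRichter00} as anchors. So your proposal needs either the symmetry/connectedness argument of Assertion~\ref{A:Disconn_Deg_Point} or a genuinely executed monodromy computation; without one of these, the molecules of $y_8$, $y_9$ (and the marks throughout Table~\ref{Tab:Cirle_Mol_New}) are not determined.
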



\begin{table}[ht]
\centering
\begin{tabular}{c}
    \includegraphics[width=0.75\textwidth]{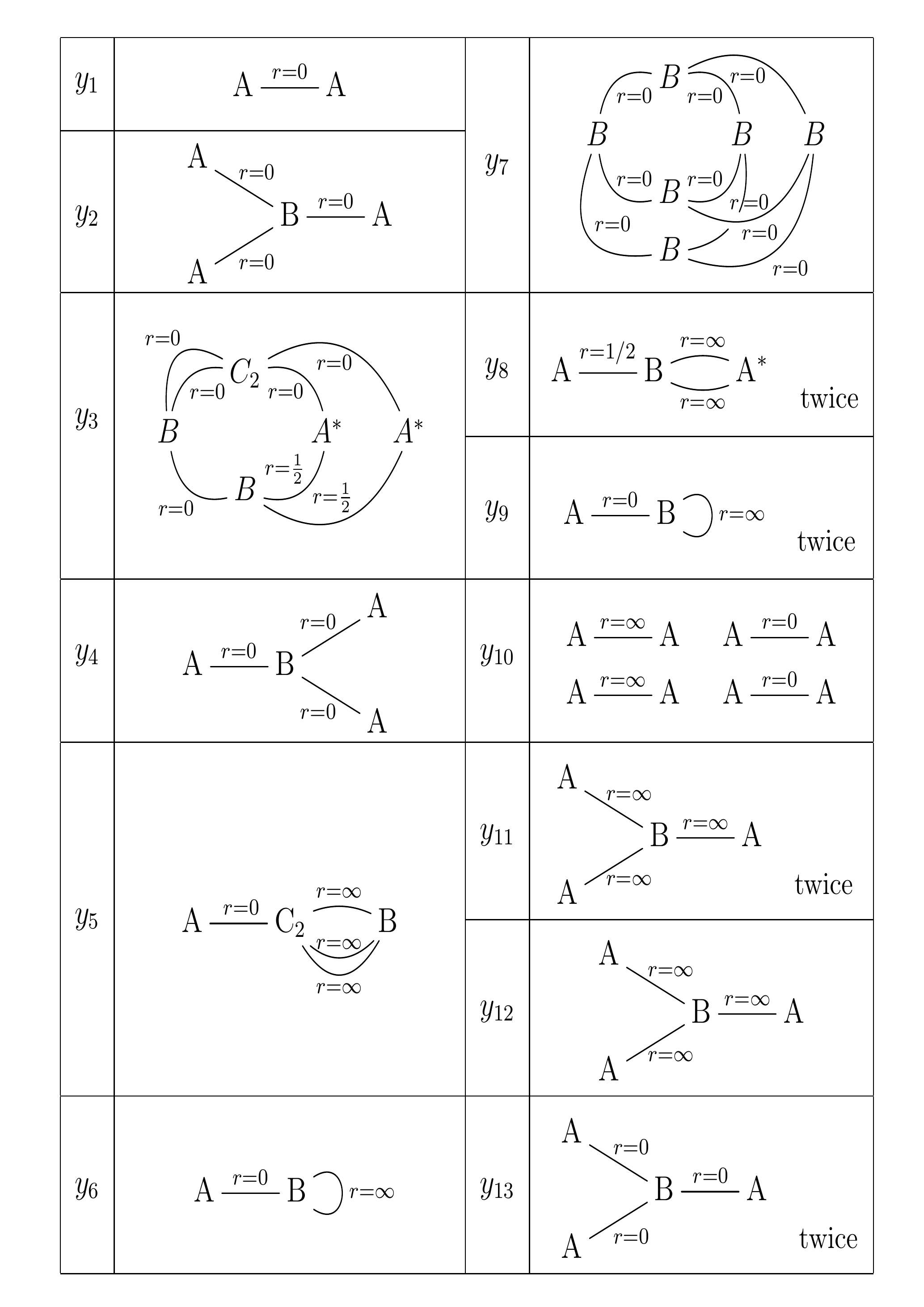}
\end{tabular}
  \caption{Loop molecules, classical Kovalevskaya case.}
\label{Tab:Cirle_Mol_Old}
\end{table}

\begin{table}[ht]
\centering
\begin{tabular}{c}
    \includegraphics[width=0.75\textwidth, height=0.5\textheight]{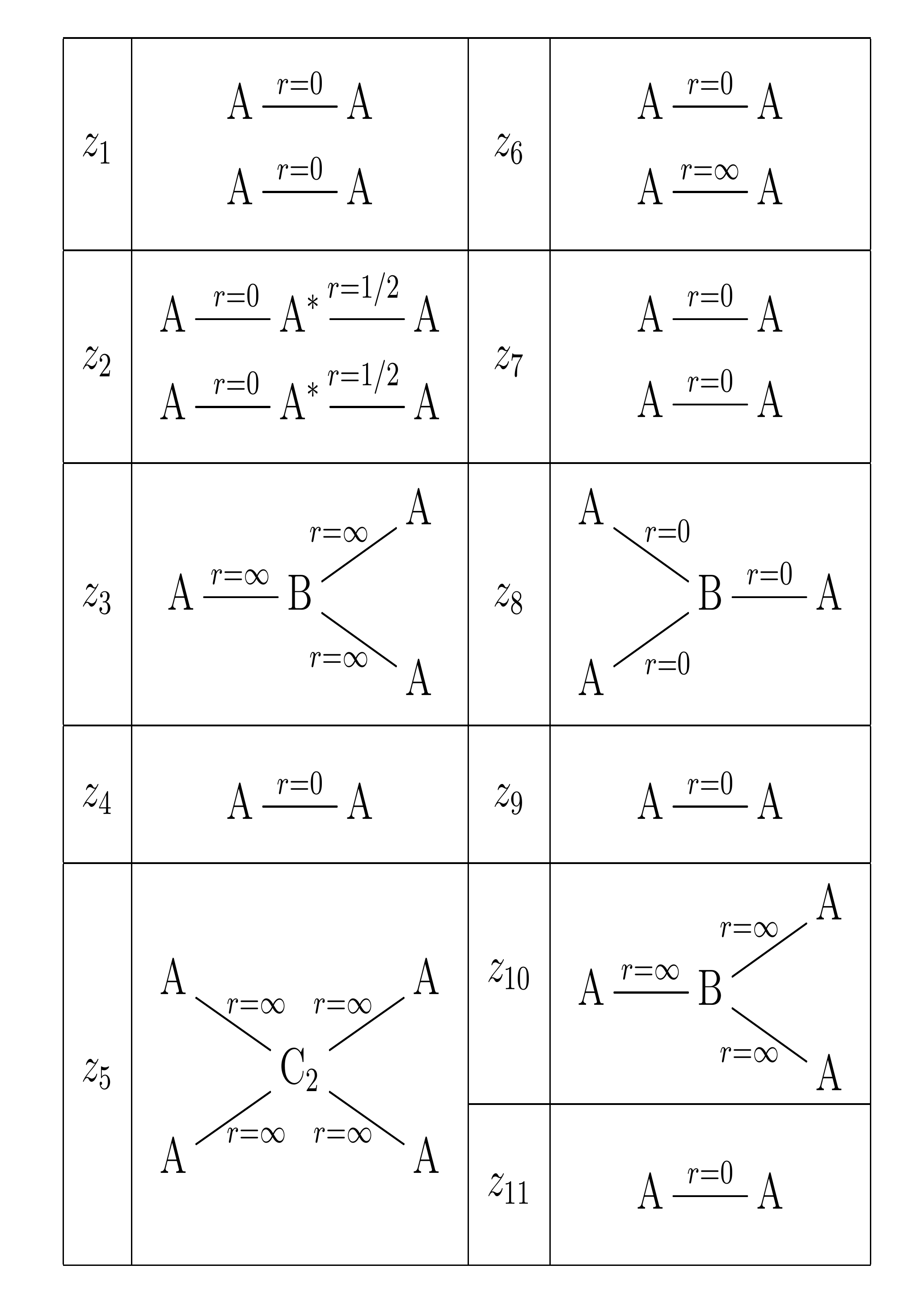}
\end{tabular}
  \caption{New loop molecules.}
\label{Tab:Cirle_Mol_New}
\end{table}

\begin{remark}
For the points on the boundary of bifurcation diagrams the loop
molecules in Tables \ref{Tab:Cirle_Mol_Old} and
\ref{Tab:Cirle_Mol_New} are shown counterclockwise.  Although in
this case the ambiguity may arise only for the point $z_2$ . The
loop molecule for this point must consists of two identical
molecules both having the same form as for the degenerate
singularity called elliptic period-doubling bifurcation. (For more
about degenerate singularities, see, for example,
\cite{BolsinovFomenko99}.)\end{remark}

Let us emphasize that in Theorems \ref{T:Bifurcations} and
\ref{T:Molecules} we consider not only the case $\varkappa>0, b\ne
0$ but also the cases $\varkappa > 0, b =0$ and $\varkappa =0$. Note
that the for $\varkappa =0$ the obtained results completely coincide
with the known results for the classical Kovalevskaya case (see, for
example, \cite{BolsinovFomenko99}).

\begin{remark}
There is an inaccuracy in the book \cite{BolsinovFomenko99} in the
list of loop molecules for the Kovalevskaya integrable case: the
molecules for the points $y_8$ and $y_9$  should be repeated twice.
\end{remark}

\subsection{Case $\varkappa>0, b=0$} \label{SubS:Bif_Diag_B_Zero}

We now describe the results in the case when the second integral
$b=0$. The following lemma is proved in Section
\ref{SubS:CritRank1}, as well as Lemma
\ref{L:Curve_Images_B_not_0_Kappa_not_0}.

\begin{lemma} \label{L:Curve_Images_B_Zero_Kappa_not_0}
Let $\varkappa \ne 0$ and $b=0$. Then for any non-singular orbit
$M_{a, 0}$ (that is, for orbits such that $a \ne 0$) the bifurcation
diagram $\Sigma_{h, k}$ for the integrable Hamiltonian system with
Hamiltonian \eqref{Eq:Hamiltonian} and integral
\eqref{Eq:First_Integral} is contained in the union of the following
three families of curves on the plane $\mathbb{R}^2(h,k)$:
\begin{enumerate}
\item The line $k=0$; \item The union of the parabola
\begin{equation}\label{Eq:Up Parabola B Zero} k=(h-\varkappa c_1^2)^2 +
4ac_1^2\end{equation} and the tangent line to this parabola at the
point $h=0$
\begin{equation}\label{Eq:Tangent Line B Zero} k= - 2 \varkappa c_1^2 h +
(4ac_1^2 + \varkappa^2 c_1^4);\end{equation} \item The union of two
parabolas \begin{equation} \label{Eq:Left Parabola B Zero} k
=\left(h-\varkappa c_1^2 \right)^2
\end{equation} and
\begin{equation}\label{Eq:Right Parabola B Zero}  k=\left(h-\varkappa c_1^2
-\frac{2a}{\varkappa } \right)^2.
\end{equation} \end{enumerate}
\end{lemma}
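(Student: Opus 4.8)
The plan is to compute the bifurcation diagram as the image of the critical set of the momentum map $\mathcal{F} = (H,K)$ restricted to the orbit $M_{a,0}$, treating the case $b=0$ as a degeneration of the general formulas in Lemma~\ref{L:Curve_Images_B_not_0_Kappa_not_0}. The cleanest route is to first verify that the three families of curves for $b=0$ arise by taking the limit $b\to 0$ of the three families in Lemma~\ref{L:Curve_Images_B_not_0_Kappa_not_0}, and then argue directly that these limiting curves do in fact contain the image of the critical points on the (now genuinely different) orbit $M_{a,0}$.

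First I would set $b=0$ in the parametric curve \eqref{Eq:Parametric Curve}. Since $h(z) = b^2c_1^2/z^2 + 2z$ degenerates to $h = 2z$, the parametrization becomes a graph: substituting $z = h/2$ into $k(z)$ with $b=0$ gives $k = 4ac_1^2 - 2\varkappa c_1^2 h + \varkappa^2 c_1^4 = (h - \varkappa c_1^2)^2 + 4ac_1^2 - (h-\varkappa c_1^2)^2 + \varkappa^2 c_1^4 - 2\varkappa c_1^2 h$; more carefully, one checks the limiting image splits into the parabola \eqref{Eq:Up Parabola B Zero} and its tangent line \eqref{Eq:Tangent Line B Zero}. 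The reason a single parametric curve breaks into two pieces is that at $b=0$ the momentum map acquires the extra discrete symmetry $x_2 \to -x_2$ compatible with $f_2 = \langle \mathbf{x},\mathbf{J}\rangle = 0$, and the critical set splits accordingly. Second, I would set $b=0$ in the two parabolas \eqref{Eq:Left Parabola} and \eqref{Eq:Right Parabola}: the radical $\sqrt{a^2 - 4\varkappa b^2}$ becomes $|a|$, so the shifts $-a/\varkappa \pm |a|/\varkappa$ become $0$ and $-2a/\varkappa$ (for $a>0$), yielding exactly \eqref{Eq:Left Parabola B Zero} and \eqref{Eq:Right Parabola B Zero}.

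To make this rigorous rather than merely a limit computation, I would recompute the rank-one critical set intrinsically on $M_{a,0}$. On this orbit the constraints $\mathbf{x}^2 + \varkappa\mathbf{J}^2 = a$ and $\langle\mathbf{x},\mathbf{J}\rangle = 0$ hold, so I would use Lagrange multipliers: a point is critical for $\mathcal{F}$ on $M_{a,0}$ iff $\lambda\, dH + \mu\, dK + \nu_1\, df_1 + \nu_2\, df_2 = 0$ for scalars not all zero, with the additional requirement that $(\lambda,\mu)\neq(0,0)$ characterizing rank $\le 1$ criticality of $\mathcal{F}$ itself. Resolving this linear system in the six variables $(\mathbf{J},\mathbf{x})$ and eliminating them against the two constraints produces polynomial relations between $h$ and $k$; I expect these to factor precisely into the line $k=0$, the union \eqref{Eq:Up Parabola B Zero}--\eqref{Eq:Tangent Line B Zero}, and the two parabolas \eqref{Eq:Left Parabola B Zero}--\eqref{Eq:Right Parabola B Zero}. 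The orbit $M_{a,0}$ for $a\neq 0$, $\varkappa>0$ is still $\mathbb{S}^2\times\mathbb{S}^2$ (since $a>2\sqrt{\varkappa}|b|=0$), so compactness and the structure of the critical set carry over without the subtleties of the $a=b=0$ singular orbit excluded here.

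The main obstacle will be the factorization bookkeeping in the elimination: the critical-point equations are not linear after imposing the constraints, and naively the resultant will be a high-degree curve that only factors into the stated pieces after one carefully separates the branches where $x_2=0$ (giving the tangent line) from those where $x_2\neq 0$ (giving the parabola), and the branches $K=0$ (the line $k=0$) from $K>0$. I would organize this by exploiting the symmetry $\sigma_2$ and the vanishing $f_2=0$ to split the analysis into invariant and anti-invariant sectors before eliminating, so that each sector yields a single clean curve. Once each stated family is shown to contain the image of a nonempty set of critical points and every critical point is accounted for, the lemma follows; the identification of \emph{which} arcs actually appear in the true bifurcation diagram (as opposed to the larger curve union) is deferred, exactly as in Lemma~\ref{L:Curve_Images_B_not_0_Kappa_not_0}, to the subsequent theorem.
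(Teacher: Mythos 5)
Your skeleton --- characterize the rank-$\le 1$ locus by a multiplier condition and push it forward under $(H,K)$ --- is in substance what the paper does: since the Casimirs $f_1,f_2$ have trivial Hamiltonian vector fields, your condition $\lambda\,dH+\mu\,dK+\nu_1\,df_1+\nu_2\,df_2=0$ is equivalent to the paper's condition that $X_H$ and $X_K$ be linearly dependent, and the paper has already solved those equations once and for all, for every value of $a$, $b$ and $\varkappa$, in Assertion~\ref{A:Rank_1_Preimage} (six explicit families). Its proof of the present lemma (Assertion~\ref{A:Rank1_Preimages_Images_B0}) then only re-computes the images of those same families at $b=0$; so redoing the elimination from scratch on $M_{a,0}$ is not wrong in principle, just redundant --- provided the elimination is actually carried out, which your proposal defers entirely ("I expect these to factor\dots").

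The genuine gap sits exactly where the $b=0$ case differs from $b\neq 0$, and the mechanism you propose there is incorrect. You attribute the splitting of the parametric curve into the parabola \eqref{Eq:Up Parabola B Zero} plus the tangent line \eqref{Eq:Tangent Line B Zero} to an "extra discrete symmetry $x_2\to -x_2$", and you plan to organize the factorization by separating $x_2=0$ (tangent line) from $x_2\neq 0$ (parabola). Neither claim holds: $x_2\mapsto -x_2$ alone is not a symmetry (it changes $(2J_1J_2-2c_1x_2)^2$ and hence $K$; only the simultaneous flip $\sigma_2$ of $J_2$ and $x_2$ is), and on the relevant family both limiting curves contain points with arbitrary $x_2$. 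The true mechanism is a factorization of the Casimir: on the family $J_2=0$, $c_1x_3=J_1J_3$ one has $b=J_1\left(J_3^2+c_1x_1\right)/c_1$, so $b=0$ forces either $J_1=0$, whose image is the tangent line \eqref{Eq:Tangent Line B Zero}, or $J_3^2+c_1x_1=0$, which is precisely the locus where the parametrization by $z=J_3^2+c_1x_1$ used for $b\neq 0$ degenerates, and whose image is the parabola \eqref{Eq:Up Parabola B Zero}. For the same reason your limit heuristic is subtler than stated: with $z$ fixed and $b\to 0$ the curve \eqref{Eq:Parametric Curve} converges only to the tangent line; the upper parabola arises only under the simultaneous scaling $z\sim b$. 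Finally, a second $b=0$-specific item is missing from your plan: the family $J_1=J_3=x_2=0$ of critical points (family \ref{Rank1SeriesC3} of Assertion~\ref{A:Rank_1_Preimage}) is empty on orbits with $b\neq 0$ but not on $M_{a,0}$, and one must check separately that its image lies on the left parabola \eqref{Eq:Left Parabola B Zero}. Without these corrections the "factorization bookkeeping" would not come out along the sectors you chose, so the proof as proposed would stall at its central step.
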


Now in order to construct the bifurcation diagrams of the momentum
mapping it remains to throw away several parts of the curves
described in Lemma \ref{L:Curve_Images_B_Zero_Kappa_not_0}. A
precise description of the bifurcation diagrams is given in Theorem
\ref{T:Bif_Diag_B_Zero} (see also Fig.
\ref{Fig:Koval_Z_1}--\ref{Fig:Koval_Z_3_B}).

Let us now describe the set of critical points of rank $0$. The
following lemma is proved in Section
\ref{SubS:Critical_Points_Zero_Rang}, as well as Lemma
\ref{L:RankZeroType}.

\begin{lemma} \label{L:RankZeroType_BZero}
Let $\varkappa>0$ and $b=0$. Then the image of critical points of
rank $0$ is contained in the union of the following three families
of points:
\begin{enumerate} \item The point of intersection of the parabolas \eqref{Eq:Left Parabola B Zero}
and \eqref{Eq:Right Parabola B Zero} (the point $z_5$ in Fig.
\ref{Fig:Koval_Z_1}). This point has coordinates
\[ h= \varkappa c_1^2 + \frac{a}{\varkappa}, \quad k=
\frac{a^2}{\varkappa^2}.\] If $a>\varkappa^2c_1^2$, then there are
two critical points of rank $0$ in the preimage of the point on the
orbit $M_{a, 0}$. If $a=\varkappa^2c_1^2$, then there is one
critical point of rank $0$ in the preimage, and if
$a<\varkappa^2c_1^2$, then there is no critical points of rank $0$
in the preimage. If $a>\varkappa^2c_1^2$ then all critical points
from this series are nondegenerate critical points of saddle-center
type.

\item The point of intersection of the upper parabola \eqref{Eq:Up Parabola B Zero}
and the tangent line \eqref{Eq:Tangent Line B Zero} with the
parabolas \eqref{Eq:Left Parabola B Zero} and \eqref{Eq:Right
Parabola B Zero}.

For any $a>0$ there are two points of intersection of the line
\eqref{Eq:Tangent Line B Zero} and the left parabola \eqref{Eq:Left
Parabola B Zero} with coordinates
\[h= \pm 2 \sqrt{a}c_1, \quad k= (\pm 2 \sqrt{a}c_1 -
\varkappa^2c_1^2)^2 \] and there is exactly one critical point of
rank $0$ in the preimage of each of these points on the orbit $M_{a,
0}$.

\begin{enumerate}
\item The critical point in the preimage of the upper point of intersection (that is the point
$y_1$ in Fig. \ref{Fig:Koval_Z_1} - \ref{Fig:Koval_Z_3_B}) is a
nondegenerate critical point of center--center type.

\item  If $a>\varkappa^2 c_1^2$, then the critical point $p$ in the preimage of the lower point of intersection
(that is the point $y_3$ in Fig. \ref{Fig:Koval_Z_1}, $z_{10}$ in
Fig. \ref{Fig:Koval_Z_2} and $z_{9}$ in Fig. \ref{Fig:Koval_Z_3_A},
\ref{Fig:Koval_Z_3_B}) is a nondegenerate critical point of
saddle--saddle type. If $\frac{\varkappa^2 c_1^2}{4}<a<\varkappa^2
c_1^2$, then $p$ is a nondegenerate critical point of saddle--center
type and if $a< \frac{\varkappa^2 c_1^2}{4}$, then $p$ is a
nondegenerate critical point of center--center type.

\item There are two critical points of rank $0$ in the preimage of the
intersection point of the upper parabola \eqref{Eq:Up Parabola B
Zero} with the right parabola \eqref{Eq:Right Parabola B Zero} (the
point $z_7$ in Fig. \ref{Fig:Koval_Z_1}--\ref{Fig:Koval_Z_3_B})
\[h= \frac{a}{\varkappa}, \quad k=(\frac{a}{\varkappa}-\varkappa
c_1^2)^2 + 4ac_1^2 \] on each orbit $M_{a, 0}$ (where $a>0$) and
both points in the preimage are nondegenerate critical points of
center--center type.
\end{enumerate}

\item The point of intersection of the line $k=0$ and the tangent line
\eqref{Eq:Tangent Line B Zero} (the point $z_1$ in Fig.
\ref{Fig:Koval_Z_1} and \ref{Fig:Koval_Z_2}). This point has
coordinates \[h=\frac{\varkappa c_1^2}{2}+ \frac{2a}{\varkappa},
\quad k=0. \] If $a> \frac{\varkappa^2 c_1^2}{4}$, then there are
two critical points of rank $0$ of center--center type in the
preimage of this point on the orbit $M_{a, 0}$. If $a=
\frac{\varkappa^2 c_1^2}{4}$, then there is one point critical point
of rank $0$ in the preimage, and if $a< \frac{\varkappa^2
c_1^2}{4}$, then there is no critical points of rank $0$ in the
preimage.
\end{enumerate}

\end{lemma}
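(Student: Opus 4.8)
The plan is to characterize the rank-$0$ points as the common singularities of the two commuting flows and then read off their images, multiplicities and Williamson types by direct computation, exploiting the degeneracies that appear when $b=0$. Recall that on a non-singular orbit $M_{a,0}$ the symplectic form is nondegenerate, so a point $p$ is critical of rank $0$ for $\mathcal F=(H,K)$ precisely when the Hamiltonian vector fields $X_H$ and $X_K$ both vanish at $p$; equivalently, when $dH$ and $dK$ lie in the span of $df_1,df_2$ at $p$. I would write $X_H=0$ explicitly from the Lie--Poisson bracket \eqref{Eq:SO4_Poisson_Lie_bracket}: differentiating the Hamiltonian \eqref{Eq:Hamiltonian} gives six polynomial equations $\{J_i,H\}=0$, $\{x_i,H\}=0$. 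The equation $\{J_3,H\}=0$ forces $x_2=0$ (since $c_1\ne 0$) and $\{J_1,H\}=0$ forces $J_2J_3=0$, which splits the search into the two branches $J_2=0$ and $J_3=0$; the first branch lies in the fixed-point set of the symmetry $\sigma_2$, and on each branch the remaining equations shrink to a short polynomial system.

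Next, writing $K=K_1^2+K_2^2$ with $K_1=J_1^2-J_2^2-2c_1x_1+\varkappa c_1^2$ and $K_2=2J_1J_2-2c_1x_2$, one has $X_K=2K_1X_{K_1}+2K_2X_{K_2}$, so on the locus $x_2=0$, $J_2J_3=0$ the condition $X_K=0$ is cheap to impose. I would then solve $X_H=X_K=0$ together with the two Casimir constraints $f_1=\mathbf x^2+\varkappa\mathbf J^2=a$ and $f_2=\langle\mathbf x,\mathbf J\rangle=0$, organizing the solutions by the symmetry strata ($J_2=0$ versus $J_3=0$, together with the signs of the surviving coordinates). Substituting each family back into \eqref{Eq:Hamiltonian} and \eqref{Eq:First_Integral} produces its image $(h,k)$, and one checks directly that these images are exactly the three listed families: the meeting point of the parabolas \eqref{Eq:Left Parabola B Zero} and \eqref{Eq:Right Parabola B Zero}; the intersections of the upper parabola \eqref{Eq:Up Parabola B Zero} and its tangent line \eqref{Eq:Tangent Line B Zero} with those two parabolas; and the intersection of the line $k=0$ with the tangent line \eqref{Eq:Tangent Line B Zero}.

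Counting how many points of $M_{a,0}$ lie over each image, and for which $a$ they are real, gives the stated multiplicities and the threshold values $a=\varkappa^2c_1^2$ and $a=\varkappa^2c_1^2/4$: these are precisely the values where two real solution branches collide or leave the orbit, which I would verify by tracking the discriminant of the one-variable polynomial obtained after eliminating the coordinates on each branch. To determine the types I would linearize the flow of $X_H$ on $T_pM_{a,0}$ at each rank-$0$ point; since $X_H$ vanishes there, its linearization is a well-defined operator $A\in\mathfrak{sp}(4)$, and the Williamson type (center-center, center-saddle/saddle-center, saddle-saddle) is dictated by whether its eigenvalue pairs $\pm\lambda_1,\pm\lambda_2$ are imaginary or real. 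A type transition occurs exactly where some $\lambda_j$ passes through $0$, which reproduces the thresholds in $a$ above; nondegeneracy away from the thresholds follows from $\lambda_1\lambda_2\ne 0$.

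The main obstacle I anticipate is purely organizational: keeping the case split by symmetry strata and coordinate signs consistent, and pinning down the correct Williamson label on each branch---in particular separating saddle-center from center-saddle and confirming that no focus-focus points occur (i.e.\ that the four eigenvalues never form a genuine complex quadruple, only real or imaginary pairs). As an independent check I would compare the output with the $b\to 0$ limit of Lemma~\ref{L:RankZeroType}, keeping in mind that several distinct points of the $b\ne 0$ bifurcation diagram collide as $b\to 0$, so the limit must be read with the appropriate multiplicities rather than naively.
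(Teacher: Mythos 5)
Your overall route is the same as the paper's: rank-$0$ points are the common zeros of $X_H$ and $X_K$; the equations $\{J_3,H\}=0$ and $\{J_1,H\}=0$ force $x_2=0$ and $J_2J_3=0$, after which an exhaustive case analysis yields the explicit families (this is precisely how Assertion~\ref{A:Rank_1_Preimage}'s companion, Assertion~\ref{A:Rank_0_Preimage}, is proved); the images, the thresholds $a=\varkappa^2c_1^2$ and $a=\varkappa^2c_1^2/4$, and the multiplicities then follow by direct substitution and elementary counting, which in the $b=0$ case is indeed easy. All of that part of your plan is sound.

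The genuine gap is in your final step, where you linearize only $X_H$ and assert that ``nondegeneracy away from the thresholds follows from $\lambda_1\lambda_2\ne 0$.'' That is not a valid criterion. Nondegeneracy of a rank-$0$ point means that the pair $A_H$, $A_K$ generates a Cartan subalgebra of $\mathfrak{sp}(4,\mathbb{R})$ (Assertion~\ref{A:NonDegen_Criterium_Rank_0}): the two linearizations must be linearly \emph{independent}, and some combination $\lambda A_H+\mu A_K$ must have four distinct nonzero eigenvalues. If $A_K$ happened to be proportional to $A_H$ (or to vanish) at the point, the point would be degenerate even though $A_H$ has spectrum $\pm\lambda_1,\pm\lambda_2$ with $\lambda_1\lambda_2\ne 0$; and if $\lambda_1=\pm\lambda_2$, no conclusion about the Cartan property can be drawn from $A_H$ alone. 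Moreover, the Williamson labels (center--center, saddle--center, saddle--saddle) are only defined once nondegeneracy is established; reading them off the spectrum of $A_H$ is legitimate only afterwards, and only when that spectrum has no zero eigenvalues. The paper closes exactly this hole: in addition to the spectrum of $A_H$ (Assertion~\ref{A:Spectr_Ham_Rank0}) it exhibits, for each series of rank-$0$ points, a second function $F=K+\lambda H$ (with $\lambda$ taken from Assertion~\ref{A:Rank1_Coeff_Spectr}, e.g.\ $\lambda=2(\varkappa c_1^2-J_1^2)$ or $\lambda=0$, and a formal $\lambda$ for the two-parabola intersection point) whose linearization $A_F$ has exactly two nonzero eigenvalues. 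This simultaneously forces linear independence of $A_H$ and $A_K$ (a proportional pair could only produce spectra proportional to that of $A_H$, which has four nonzero eigenvalues) and guarantees, even when $A_H$ has coincident eigenvalues, that a suitable combination $H+\mu F$ has four distinct nonzero ones. To complete your proof you must add this computation of $A_K$, or of $A_{K+\lambda H}$, on each of the three families; with that supplement the rest of your argument goes through.
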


In the case when $\varkappa>0$ and $b=0$ there are three
qualitatively different types of bifurcation diagrams. This is due
to the fact that the functions $f_k, f_r, f_m, f_t$ and $f_l$ given
by the formulas \eqref{Eq: Function_F_K}, \eqref{Eq:Function_F_r},
\eqref{Eq:Function_F_m}, \eqref{Eq:Triple_Intersect} and
\eqref{Eq:Function_F_l} respectively divide the ray $\{b=0, a\geq
0\}$ into $3$ parts.

\begin{theorem} \label{T:Bif_Diag_B_Zero}
In the case when $\varkappa>0$ and $b=0$ the form of the bifurcation
diagram depends on the parameter $a$. In the following three cases
the diagrams are qualitatively different:

\begin{enumerate}

\item $0< a < \frac{\varkappa^2 c_1^2}{4}$,

\item $\frac{\varkappa^2 c_1^2}{4} < a < \varkappa^2 c_1^2$,

\item $\varkappa^2 c_1^2<a$.

\end{enumerate}

The corresponding bifurcation diagrams are shown in Fig.
\ref{Fig:Koval_Z_1}--\ref{Fig:Koval_Z_3_B}, where the formulas for
the lines and parabolas are given in Lemma
\ref{L:Curve_Images_B_Zero_Kappa_not_0}.
\end{theorem}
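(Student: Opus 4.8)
The plan is to proceed exactly as in the proof of Theorem~\ref{T:Bif_Diag}: I would take the list of curves from Lemma~\ref{L:Curve_Images_B_Zero_Kappa_not_0} as the ambient set containing the bifurcation diagram and then decide, arc by arc, which pieces actually consist of critical values. Since $\varkappa>0$, every non-singular orbit $M_{a,0}$ is compact (diffeomorphic to $\mathbb{S}^2\times\mathbb{S}^2$), so the bifurcation diagram $\Sigma_{h,k}$ is the whole set of critical values of $\mathcal{F}=(H,K)$, its complement consists of open regions whose regular fibres are finite unions of Liouville tori, and the number of tori is $0$ on the unbounded region. First I would draw the curves \eqref{Eq:Up Parabola B Zero}--\eqref{Eq:Right Parabola B Zero} together with the line $k=0$ and mark all their mutual intersection, tangency and cusp points. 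By Lemma~\ref{L:RankZeroType_BZero} the transverse self-intersections among these curves are exactly the images of the rank-$0$ critical points, while the tangency point of the tangent line \eqref{Eq:Tangent Line B Zero} with the upper parabola \eqref{Eq:Up Parabola B Zero} is a degenerate point of rank $1$. These marked points serve as the vertices of the diagram.

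Next, by Assertion~\ref{A:NonDegen_Crit_Rank_1} the system is of Bott type on every non-singular orbit, so along each smooth arc joining two consecutive vertices the number and the nondegenerate type (elliptic or hyperbolic) of the rank-$1$ critical points in the preimage is constant. Hence each such arc is either entirely contained in $\Sigma_{h,k}$ or entirely disjoint from it, and it suffices to test one convenient point per arc. I would determine the status of each arc by starting from the unbounded region (empty preimage) and counting how the number of tori changes as one crosses successive arcs, using the local model dictated by the type of the adjacent rank-$0$ vertex: a center--center point is a smooth corner where two elliptic arcs bound a region carrying one extra torus, a saddle--center point is where an elliptic and a hyperbolic arc meet, and a saddle--saddle point is a transverse crossing of two hyperbolic arcs. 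This fixes which arcs of the four curves are retained and reconstructs the pictures of Fig.~\ref{Fig:Koval_Z_1}--\ref{Fig:Koval_Z_3_B}.

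To see that there are exactly three qualitatively distinct diagrams I would read off the transitions directly from Lemma~\ref{L:RankZeroType_BZero}. The preimage of the intersection of the two parabolas \eqref{Eq:Left Parabola B Zero} and \eqref{Eq:Right Parabola B Zero} (the point $z_5$) is nonempty only for $a>\varkappa^2c_1^2$; the preimage of the intersection of the line $k=0$ with the tangent line \eqref{Eq:Tangent Line B Zero} (the point $z_1$) is nonempty only for $a>\tfrac{\varkappa^2c_1^2}{4}$; and the lower intersection of the tangent line with the left parabola changes its type from center--center to saddle--center at $a=\tfrac{\varkappa^2c_1^2}{4}$ and from saddle--center to saddle--saddle at $a=\varkappa^2c_1^2$. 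All these events occur precisely at the two thresholds $a=\tfrac{\varkappa^2c_1^2}{4}$ and $a=\varkappa^2c_1^2$, which therefore partition the ray $\{b=0,\ a>0\}$ into the three announced intervals and force $\Sigma_{h,k}$ to change its combinatorial form at each of them.

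The main obstacle is the converse assertion hidden in the word ``qualitatively'': I must check that \emph{no other} qualitative change occurs inside each of the three open intervals. This reduces to verifying that, for every $a$ in a given interval, the relative order along each curve of the vertices listed above is fixed --- for instance that the tangency point at $h=0$ and the intersection points $h=\pm 2\sqrt{a}\,c_1$ with the left parabola never collide or swap, that $z_1$ stays to the right of the parabola intersections with $k=0$, and that no new tangency of the curves appears. This is a finite set of inequalities between the $h$-coordinates of the vertices as explicit functions of $a$, entirely analogous to the interposition statement used for $b\ne 0$ (cf. Assertion~\ref{A:Functions_Interposition}); it can also be obtained as the $b\to 0^+$ limit of the analysis behind Theorem~\ref{T:Bif_Diag}. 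Carrying it out is routine but is where the real content lies.
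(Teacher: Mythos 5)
Your proposal is correct and follows essentially the same route as the paper: the paper proves this theorem by combining Lemma~\ref{L:Curve_Images_B_Zero_Kappa_not_0}, the rank-$0$ data of Lemma~\ref{L:RankZeroType_BZero}, compactness of the orbits (to discard unbounded regions), the Bott property, and continuity/stability arguments, explicitly noting that the $b=0$ case is handled ``similarly to the case $b\ne 0$''. Your arc-by-arc torus counting and your identification of the two thresholds $a=\varkappa^2c_1^2/4$ and $a=\varkappa^2c_1^2$ (including the residual interposition inequalities, which the paper also leaves as routine verification) are a faithful expansion of exactly that argument.
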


\begin{figure}[!htb]
\minipage{0.48\textwidth}
    \includegraphics[width=\linewidth]{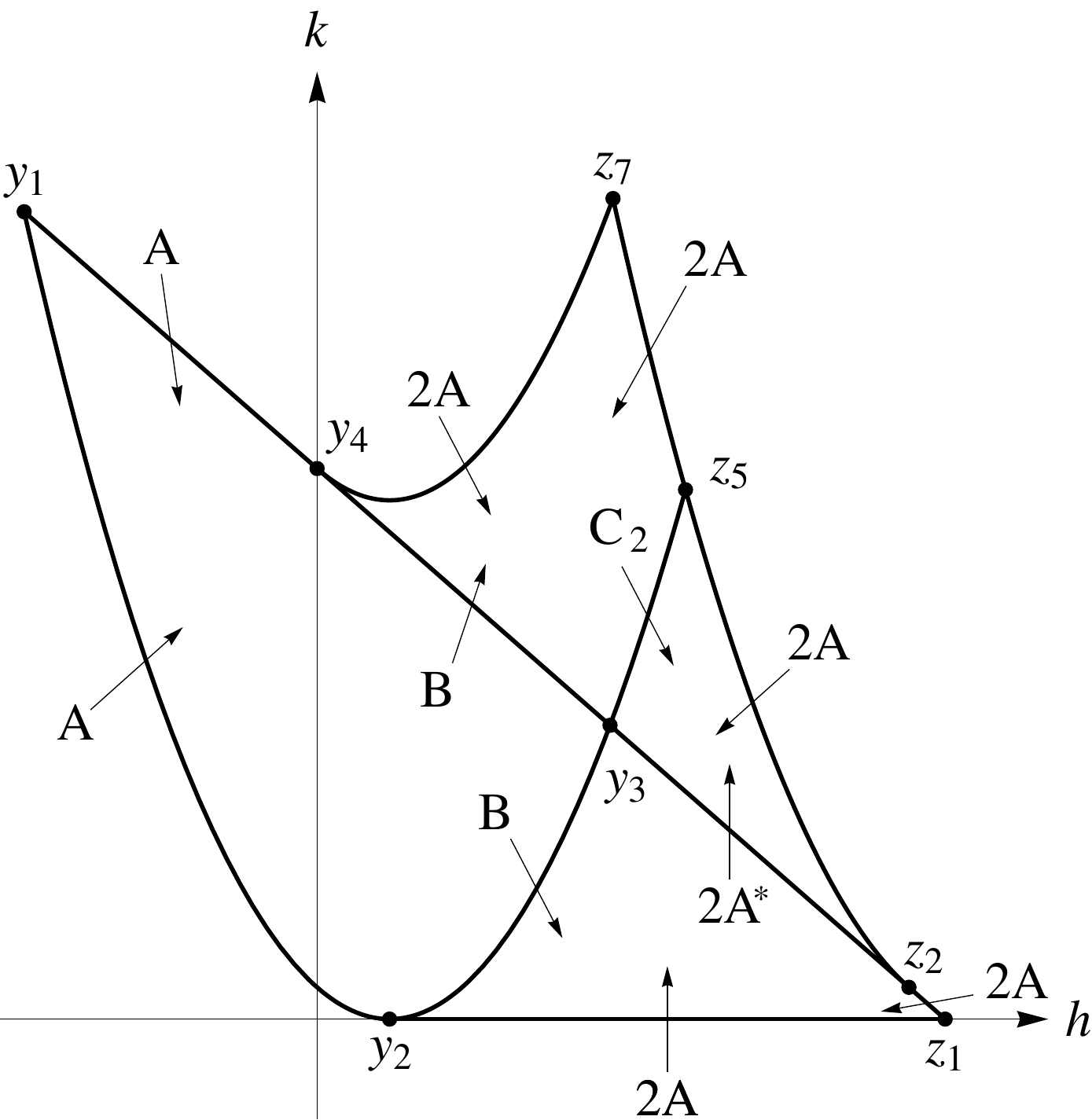}
   \caption{Area \textrm{X}:  $b=0, \quad  \varkappa^2 c_1^2 < a$}
      \label{Fig:Koval_Z_1}
\endminipage
\hspace{0.04\textwidth}
\minipage{0.48\textwidth}
    \includegraphics[width=\linewidth]{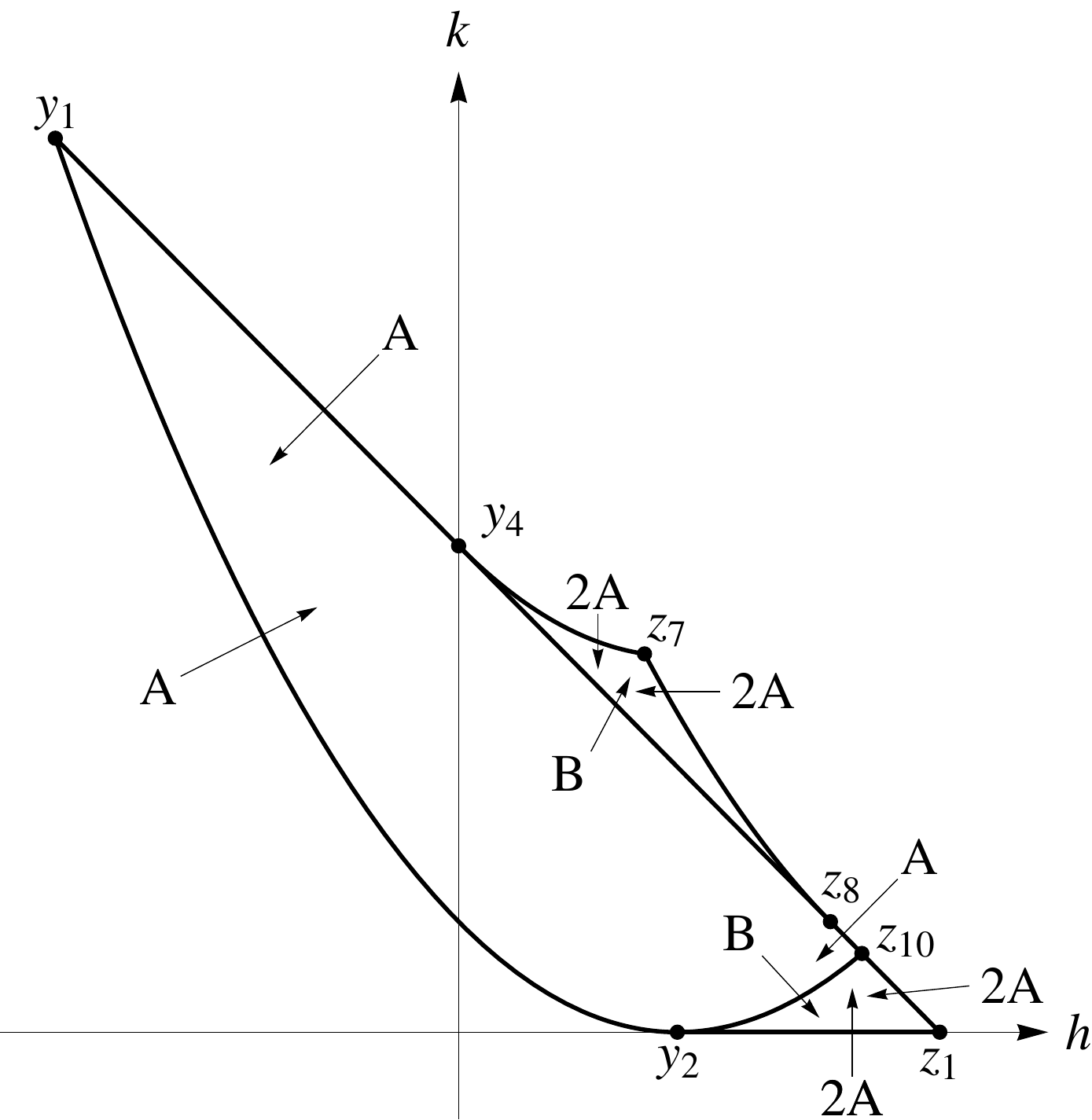}
   \caption{Area \textrm{XI}: $b=0, \quad  \frac{\varkappa^2 c_1^2}{4} < a < \varkappa^2 c_1^2$}
   \label{Fig:Koval_Z_2}
\endminipage
\end{figure}

\begin{figure}[!htb]
\minipage{0.48\textwidth}
    \includegraphics[width=\linewidth]{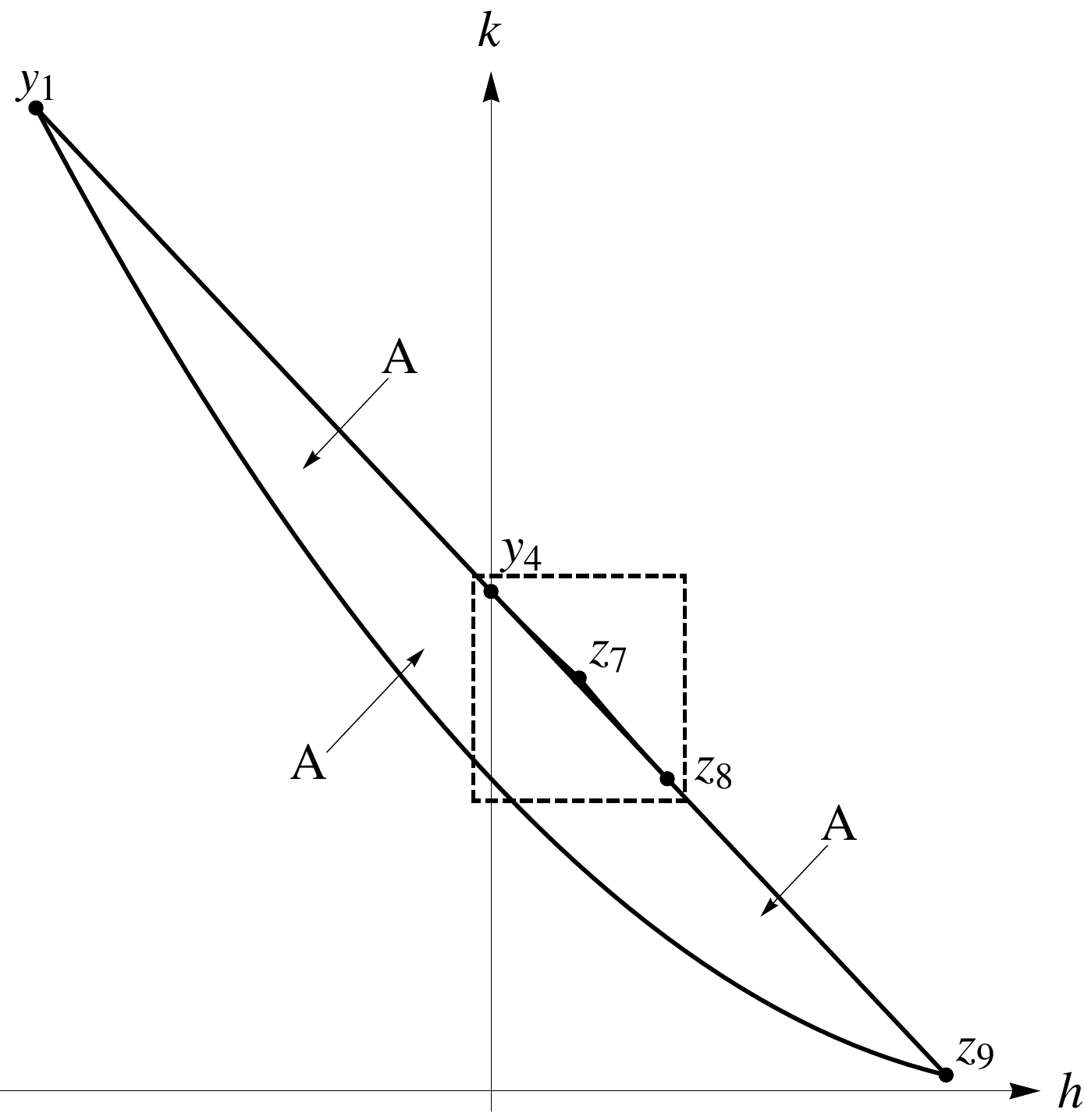}
   \caption{Area \textrm{XII}: $b=0, \quad  0<a<  \frac{\varkappa^2 c_1^2}{4}$}
   \label{Fig:Koval_Z_3_A}
\endminipage
\hspace{0.04\textwidth}
\minipage{0.48\textwidth}
    \includegraphics[width=\linewidth]{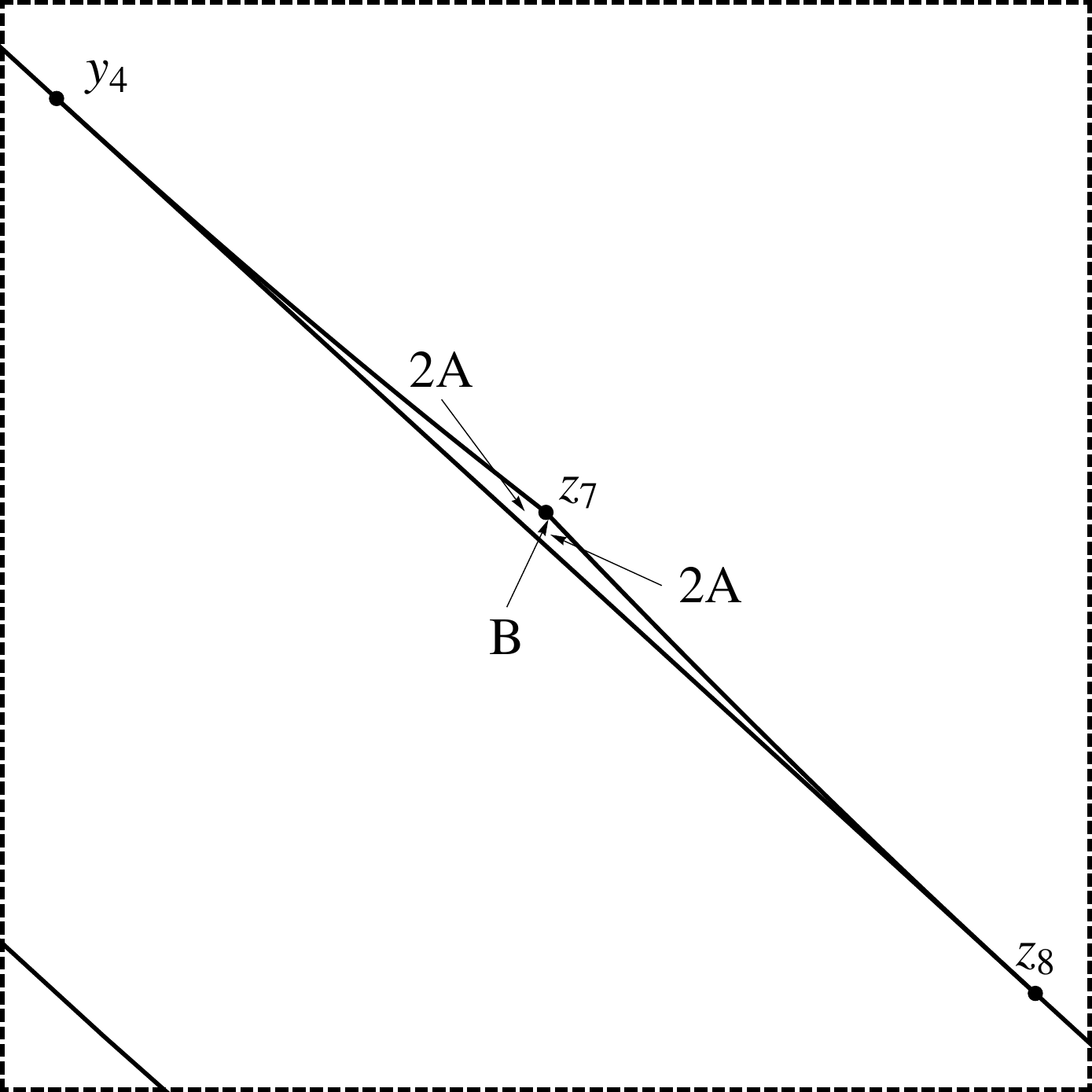}
   \caption{Area \textrm{XII}: an enlarged fragment of Fig. \ref{Fig:Koval_Z_3_A}}
      \label{Fig:Koval_Z_3_B}
\endminipage
\end{figure}

\begin{remark}
In Fig. \ref{Fig:Koval_Z_1}--\ref{Fig:Koval_Z_3_B} the arcs $z_2
z_1, z_8 z_9$ and $z_8 z_{10}$ belong to the line \eqref{Eq:Tangent
Line B Zero} and the arc  $y_4 z_7$ belongs to the upper parabola
\eqref{Eq:Up Parabola B Zero}. The rest of the arcs distribute
between the curves in an obvious way.
\end{remark}

Bifurcation diagrams for the case $b=0$ were also previously
described in the paper \cite{Kozlov12}.

The bifurcations of Liouville tori and the loop molecules for the
singular points are described earlier in Theorems
\ref{T:Bifurcations} and \ref{T:Molecules} respectively.

\section{Proof of the main statements} \label{S:Proof_Main_Statements}

\subsection{Critical points of rank $1$} \label{SubS:CritRank1}

In this section we prove Lemmas
\ref{L:Curve_Images_B_not_0_Kappa_not_0} and
\ref{L:Curve_Images_B_Zero_Kappa_not_0}, which claim that the
bifurcation diagrams are contained in the curves described in these
lemmas. For this we first describe all critical points of the
momentum mapping in Assertion \ref{A:Rank_1_Preimage} and then study
their image under the momentum mapping. Note that all these critical
points apart from the points from Assertion \ref{A:Rank_0_Preimage}
are critical points of rank $1$.

Let us emphasize that in this section we do not impose restrictions
on the parameters  $\varkappa$ and $b$ (that is $\varkappa, b \in
\mathbb{R}$, unless otherwise stated).

\begin{assertion}\label{A:Rank_1_Preimage} The set of points where the Hamiltonian vector fields corresponding
to the Hamiltonian \eqref{Eq:Hamiltonian} and the integral
\eqref{Eq:First_Integral} are linearly dependent is the union of the
following six families of points. The first three families are
three-parametric and the last three families are four-parametric.

Four-parameter families:

\begin{enumerate}

\item \label{Rank1SeriesA} $x_1 = \frac{\varkappa c_1^2  +J_1^2-J_2^2}{2 c_1}, \qquad x_2 =
\frac{J_1 J_2}{c_1}$

\item \label{Rank1SeriesB} $J_2= 0, \qquad x_3 = \frac{J_1 J_3}{c_1}$

\item \label{Rank1SeriesC} $x_1 = \varkappa c_1 + (J_1 - c_1 \frac{x_3} {J_3})\frac{x_2}{J_2}, \qquad
x_2 = J_2 \frac{(J_1 x_3 - \varkappa c_1 J_3) (J_1 J_3 - c_1 x_3) +
J_2^2 J_3 x_3}{(J_1 J_3 - c_1 x_3)^2 +J_2^2 J_3^2} ,\\$ where
$J_2J_3 \ne 0$.

\end{enumerate}

Three-parameter families:

\begin{enumerate}

\setcounter{enumi}{3}

\item \label{Rank1SeriesC1} $J_2=0, \qquad x_2=0, \qquad J_1 x_3-J_3 x_1 = 0$

\item \label{Rank1SeriesC2} $J_3 = 0, \qquad x_3 = 0, \\((x_1 - \varkappa c_1 ) J_1 + J_2 x_2) (J_2 (x_1 -\varkappa c_1) -
J_1 x_2) + c_1 x_2 (x_1 (x_1 - \varkappa c_1 ) + x_2^2) = 0$

\item \label{Rank1SeriesC3} $J_1=0, \qquad J_3=0, \qquad x_2=0$

\end{enumerate}

\end{assertion}

\begin{proof}
The points at which the Hamiltonian vector fields $X_H$ and $X_K$
are linearly dependent are exactly the points at which all 15 rank 2
minors of the matrix $\left(\begin{matrix}X_H,& X_K\end{matrix}
\right)$ composed from the coordinates of the vectors $X_H$ and
$X_K$ are equal to zero.

Note that the minor $\Delta_{13}$ corresponding to the first and the
third lines has form $$\begin{vmatrix} \{J_1, H \} & \{J_1, K \}
\\ \{J_3, H \}   & \{J_3, K \}  \end{vmatrix} = 16 c_1 \left(c_1 x_2-J_1 J_2\right) \left(J_2 J_3 \left(\varkappa c_1
- x_1 \right)+ \left(J_1 J_3-c_1 x_3\right)x_2 \right)$$

Therefore either $ x_2= \frac{J_1 J_2}{c_1}$ or $J_2 J_3
\left(\varkappa c_1 - x_1 \right)+ \left(J_1 J_3-c_1 x_3\right)x_2
=0 $.

First, we examine the case $ x_2= \frac{J_1 J_2}{c_1}$. Substituting
$x_2$ in the matrix consisting of minors we immediately obtain the
first solution \begin{equation}\label{Eq:R1Ser1A} x_1 =
\frac{\varkappa c_1^2 +J_1^2-J_2^2}{2 c_1}, \qquad x_2= \frac{J_1
J_2}{c_1}.\end{equation} The rest of the proof is by exhaustion. If
$x_3  = \frac{J_1 J_3}{c_1}$, then we obtain the following three
families of solutions:

\begin{equation}\label{Eq:R1Ser2inC} x_1 =\varkappa c_1 , \qquad x_2 =
\frac{J_1 J_2}{c_1}, \qquad x_3  = \frac{J_1 J_3}{c_1},
\end{equation}

\begin{equation}\label{Eq:R1Ser3inB} J_2 = 0, \qquad x_2 = 0, \qquad x_3 = \frac{J_1
J_3}{c_1}, \end{equation}

\begin{equation} \label{Eq:R1Ser4inC} J_1 = 0, \qquad J_3 = 0,
\qquad x_2= 0, \qquad x_3 = 0. \end{equation}

If $x_3 \ne  \frac{J_1 J_3}{c_1}$, then we get the following two
families of solutions:

\begin{equation} \label{Eq:R1Ser5inC}
J_1 x_3-J_3 x_1 = 0, \qquad J_2 = 0, \qquad x_2 = 0,
\end{equation}

\begin{equation} \label{Eq:R1Ser6inC} J_1 = 0, \qquad J_3 = 0, \qquad x_2 = 0. \end{equation}

Now suppose that $J_2 J_3 \left(\varkappa c_1 - x_1 \right)+
\left(J_1 J_3-c_1 x_3\right)x_2 =0$, $ x_2 \ne \frac{J_1 J_2}{c_1}$.
There are two variants: either $J_2 J_3 =0$ or $x_1 =
\frac{\varkappa c_1 J_2 J_3 +J_1 J_3 x_2 - c_1 x_2 x_3}{J_2 J_3}$.

If $J_2 =0$, then we get the second four-parameter family of points
of rank one:
\begin{equation} \label{Eq:R1Ser7B}
J_2= 0, \qquad x_3 = \frac{J_1 J_3}{c_1}.
\end{equation}

And if $J_2 \ne 0$, $J_3 =0$, then we obtain the following solution:
\begin{equation} \label{Eq:R1Ser8inC}
\begin{aligned}
J_3 = 0, \qquad x_3 = 0, \qquad \\ ((x_1 - \varkappa c_1 ) J_1 + J_2
x_2) (J_2 (x_1 -\varkappa c_1) - J_1 x_2) + c_1 x_2 (x_1 (x_1 -
\varkappa c_1 ) + x_2^2) = 0.
\end{aligned}
\end{equation}

Not let us consider the case $x_1 = \frac{\varkappa c_1 J_2 J_3 +J_1
J_3 x_2 - c_1 x_2 x_3}{J_2 J_3}$. It is easy to check that in this
case the minor $\Delta_{12}$ is equal to
\[-16 c_1 \left(x_2 (J_2^2 J_3^2+(J_1 J_3 - c_1 x_3)^2) - J_2
((\varkappa c_1^2  + J_1^2 + J_2^2 ) J_3 x_3 -c_1 J_1 x_3^2
-\varkappa c_1 J_1 J_3^2)\right).\]

The coefficient by $x_2$ is not equal to zero since the case $J_2
J_3 =0$ has already been analyzed. Expressing $x_2$ from this
equation we obtain the ninth solution:

\begin{equation} \label{Eq:R1Ser9C}
\begin{aligned}
x_1 = \frac{\varkappa c_1 J_2 J_3 +J_1 J_3 x_2 - c_1 x_2 x_3}{J_2
J_3},
\\ x_2 = J_2 \frac{(\varkappa c_1^2  + J_1^2 + J_2^2 ) J_3 x_3 -c_1 J_1 x_3^2 -\varkappa c_1
J_1 J_3^2}{(J_1 J_3 - c_1 x_3)^2 + J_2^2 J_3^2}.
\end{aligned}
\end{equation}

Thus we have considered all the cases. It remains to collect all the
solutions together. It is obvious that the family
\eqref{Eq:R1Ser4inC} is a particular case of the solution
\eqref{Eq:R1Ser6inC} and that the solution \eqref{Eq:R1Ser3inB} is a
special case of the solution \eqref{Eq:R1Ser7B}. It remains to note
that the family \eqref{Eq:R1Ser2inC} is contained in the families
\eqref{Eq:R1Ser7B}, \eqref{Eq:R1Ser8inC} and \eqref{Eq:R1Ser9C}.
Assertion \ref{A:Rank_1_Preimage} is proved.
\end{proof}

Now let us prove Lemma \ref{L:Curve_Images_B_not_0_Kappa_not_0}. For
this, we show that the images of the critical points from Assertion
\ref{A:Rank_1_Preimage} belong to the curves described in Lemma
\ref{L:Curve_Images_B_not_0_Kappa_not_0}.

\begin{assertion} \label{A:Rank1_Preimages_Images}
Let $\varkappa\ne 0$ and $b \ne 0$. Then the images of the families
of critical points described in Assertion \ref{A:Rank_1_Preimage}
are arranged as follows:
\begin{enumerate}

\item The images of critical points from the family \ref{Rank1SeriesA} lie on the line $k=0$.

\item The images of critical points from the family \ref{Rank1SeriesB} belong to the curve \eqref{Eq:Parametric Curve}.

\item The images of critical points from the families \ref{Rank1SeriesC},
\ref{Rank1SeriesC1}, \ref{Rank1SeriesC2} and \ref{Rank1SeriesC3} lie
on the union of two parabolas \eqref{Eq:Left Parabola} and
\eqref{Eq:Right Parabola}.

\end{enumerate}

\end{assertion}

\begin{proof}
\begin{enumerate}

\item It is explicitly checked that $k=0$.

\item The equation \eqref{Eq:Parametric Curve} can be obtained as follows.
Take the function $J_3^2+c_1 x_1$ as the parameter $z$. Note that
$J_3^2+c_1 x_1\ne 0$ since $b\ne 0$. Therefore, $J_1$  can be
expressed from the formula for $b$ and then $x_2$ can be expressed
from the formula for $a$. It remains to substitute the obtained
expressions for $J_1$ and $x_2$ in the equations for the Hamiltonian
\eqref{Eq:Hamiltonian} and the first integral
\eqref{Eq:First_Integral} and then replace $J_3^2+c_1 x_1$ by $z$.
As a result, the equations \eqref{Eq:Hamiltonian} and
\eqref{Eq:First_Integral} take the form \eqref{Eq:Parametric Curve},
as required.

\item It can be explicitly checked that for any point from the Families
\ref{Rank1SeriesC}, \ref{Rank1SeriesC1} and \ref{Rank1SeriesC2} one
of the two equations \eqref{Eq:Left Parabola} and \eqref{Eq:Right
Parabola} holds.

In this case it is easier to verify first that $k =
\left(-\frac{\lambda}{2}\right)^2$, where $\lambda$ is the
coefficient of proportionality between $X_K$ and $X_H$ (that is $X_K
+ \lambda X_H= 0$), and then to check that
\[-\frac{\lambda}{2} = h-\varkappa c_1^2 -\frac{a}{\varkappa } \pm
\frac{\sqrt{a^2-4 \varkappa b^2 }}{\varkappa }.\]

While testing this equality it is convenient to use the relation
\[ \frac{a}{b}= \frac{x_3}{J_3} + \varkappa \frac{J_3}{x_3},\]
which holds if $J_3 \ne 0$ and $x_3 \ne 0$.
\end{enumerate} Assertion \ref{A:Rank1_Preimages_Images} is proved. \end{proof}

In what follows we need the following statement about the critical
points of the family \ref{Rank1SeriesB} from Assertion
\ref{A:Rank_1_Preimage}.

\begin{assertion} \label{A:Rank1Series2_2Cirles}
Let $\varkappa\ne 0$ and $b \ne 0$. Then for $z^2> \frac{a +
\sqrt{a^2-4 \varkappa b^2 }}{2}c_1^2$ either there is no critical
points in the preimage of points of the curve $\eqref{Eq:Parametric
Curve}$ or the critical points in the preimage from two critical
circles and the symmetry $(J_3, x_3) \to (-J_3, -x_3)$ interchanges
these circles.
\end{assertion}

\begin{proof}
It is not hard to check that for a fixed parameter $z$ the critical
points form the family \ref{Rank1SeriesB} are given by the following
equations \[ J_1 = \frac{b c_1}{z}, \quad J_2 =0, \quad x_1  =
\frac{z- J_3^2}{c_1}, \quad x_3 = \frac{b}{z} J_3,
\] where the coordinates $J_3$ and $x_2$ satisfy an equation of the
form
\begin{equation} \label{Eq:Series2_EquivCond} \left(\frac{J_3^2} {c_1} +
d \right)^2 + x_2^2 = R^2\end{equation} for some constants $d$ and
$R$ depending on $\varkappa, a, b$ and $z$. Thus in order to prove
the assertion it remains to prove that $J_3 \ne 0$ for $z^2> \frac{a
+ \sqrt{a^2-4 \varkappa b^2 }}{2}c_1^2$. It is not hard to verify
explicitly: if $J_3 =0$, then the equation
\eqref{Eq:Series2_EquivCond} has the form
\[ z^4 - a c_1^2 z^2 + \varkappa b^2 c_1^4 = 0, \] which has a solution precisely when \[  \frac{a - \sqrt{a^2-4 \varkappa b^2
}}{2}c_1^2 < z^2<  \frac{a + \sqrt{a^2-4 \varkappa b^2 }}{2}c_1^2.
\] Assertion \ref{A:Rank1Series2_2Cirles} is proved. \end{proof}

Now let us prove Lemma \ref{L:Curve_Images_B_Zero_Kappa_not_0}.

\begin{assertion} \label{A:Rank1_Preimages_Images_B0} Let $\varkappa \ne 0$ and $b = 0$. Then the images of the families
of critical points described in Assertion \ref{A:Rank_1_Preimage}
are arranged as follows:

\begin{enumerate}

\item The images of critical points from the family \ref{Rank1SeriesA} lie on the line $k=0$.

\item The images of critical points from the family  \ref{Rank1SeriesB}
belong to the union of the parabola \eqref{Eq:Up Parabola B Zero}
and the tangent line \eqref{Eq:Tangent Line B Zero}.

\item Images of the critical points from the families \ref{Rank1SeriesC},
\ref{Rank1SeriesC1}, \ref{Rank1SeriesC2} and \ref{Rank1SeriesC3} lie
on the union of two parabolas \eqref{Eq:Left Parabola B Zero} and
\eqref{Eq:Right Parabola B Zero}.

\end{enumerate}

\end{assertion}

\begin{proof}

The proof of this statement is almost identical to the proof of
Assertion \ref{A:Rank1_Preimages_Images} except for the following.
First, the case $J_3^2+c_1 x_1=0$ should be considered while
determining the image of the family \ref{Rank1SeriesB}. It is not
hard to check explicitly that in this case the image lies on the
parabola \eqref{Eq:Up Parabola B Zero}. Second, we have to consider
the family \ref{Rank1SeriesC3} and show that its image lies on the
left parabola  \eqref{Eq:Left Parabola B Zero}.
\end{proof}

We now prove Assertion \ref{A:NonDegen_Crit_Rank_1} about the
nondegeneracy of critical points of rank $1$. In order to prove it
we use the following simple criterion of nondegeneracy for critical
points of rank $1$ (see \cite{BolsinovFomenko99}).

\begin{assertion} \label{A:NonDegen_Criterium}
Let $(M^4, \omega)$ be a symplectic manifold and $y_0 \in (M,
\omega)$ be a critical point of rank $1$ for an integrable system
with Hamiltonian $H$ and integral $K$. Denote by $F= \alpha H +
\beta K$ a nontrivial linear combination for which the point $y_0$
is a critical point and by $A_F$ the linearization of the
corresponding Hamiltonian vector field $X_F$ at this point $y_0$.
The point $y_0$ is nondegenerate if and only if the operator $A_F$
has a non-zero eigenvalue.
\end{assertion}

In the case under consideration the system is defined on a Poisson
manifold. Hence it is convenient to use the following statement.

\begin{assertion} \label{A:HamVectorLinear_Poisson}
Suppose that in local coordinates $(p^1, \dots, p^k, q^1, \dots, q^k,\\
z^1, \dots, z^m)$ in a neighbourhood of a point $x_0$ a Poisson
bracket has the form $\sum_{i=1}^k \frac{\partial}{\partial p^i}
\wedge \frac{\partial}{\partial q^i}$. Suppose also that $x_0$ is a
critical point for a Hamiltonian vector field $X_F$ with Hamiltonian
$F$. Then the linearization $A_{F}$ of the Hamiltonian vector field
$X_F$ has the form
\begin{gather*}(\frac{\partial^2 F}{
\partial q^i \partial p^j} \frac{\partial}{\partial p^i} \otimes dp^j + \frac{\partial^2 F}{
\partial q^i \partial q^j} \frac{\partial}{\partial p^i} \otimes
dq^j + \frac{\partial^2 F}{
\partial q^i \partial z^j} \frac{\partial}{\partial p^i} \otimes
dz^j)
-\\
(\frac{\partial^2 F}{ \partial p^i \partial p^j}
\frac{\partial}{\partial q^i} \otimes dp^j + \frac{\partial^2 F}{
\partial p^i \partial q^j} \frac{\partial}{\partial q^i} \otimes
dq^j + \frac{\partial^2 F}{
\partial p^i \partial z^j} \frac{\partial}{\partial q^i} \otimes
dz^j).
\end{gather*}

\end{assertion}

Thus if $\hat{F}$ is the restriction of the function $F$ to a
symplectic leaf, then the spectrum of the linearization of the
vector field $X_F$ can be obtained from the spectrum of the
linearization of the vector field $X_{\hat{F}}$ by adding zeros in
the amount equal to the codimension of the symplectic leaf.
Therefore, as well as in the symplectic case, in order to check the
nondegeneracy of points of rank $1$ it is sufficient to verify that
the spectrum of the corresponding operator does not consist solely
of zeros.

This can be verified explicitly. To simplify the verification in the
following assertion we specify the coefficient of proportionality
$\lambda$ between the Hamiltonian vector fields corresponding to the
Hamiltonian \eqref{Eq:Hamiltonian} and the integral
\eqref{Eq:First_Integral} as well as describe the spectrum of the
linearization of the corresponding Hamiltonian vector field $X_{K+
\lambda H}$ for all critical points of rank $1$ of the integrable
Hamiltonian system under consideration (that is, for all points from
Assertion \ref{A:Rank_1_Preimage} except for the points of rank $0$
from Assertion \ref{A:Rank_0_Preimage}).

\begin{assertion}\label{A:Rank1_Coeff_Spectr}
For each critical point of rank $1$ from Assertion
\ref{A:Rank_1_Preimage} we specify $\lambda$ such that $X_K +
\lambda X_H= 0$ at this point and $\mu$ such that the spectrum of
the operator $A_{K+ \lambda H}$ consists of four zero and $\pm \mu$.

Four-parameter families:

\begin{enumerate}

\item Family \ref{Rank1SeriesA}. The coefficient of proportionality $\lambda =
0$, that is $X_K = (0, 0, 0, 0, 0, 0)$. The nontrivial eigenvalue:
\[\mu= 8 i |\left(\varkappa c_1^2  + J_1^2+J_2^2\right) J_3 -2 c_1
J_1 x_3|.\]

\item Family \ref{Rank1SeriesB}. The coefficient of proportionality: \[\lambda = 2 \left(\varkappa c_1^2
-J_1^2\right),\] the square of the eigenvalue:
\[ \mu^2 = 64 c_1 \left(J_1^2-J_3^2 - c_1 x_1 \right) \left(
\left(J_1^2-c_1 x_1\right) \left(x_1-\varkappa c_1 \right)-c_1 x_2^2
\right). \]

\item Family \ref{Rank1SeriesC}. The coefficient of proportionality: \[\lambda = 2 \left(\varkappa c_1^2
+J_1^2+J_2^2- 2 c_1 J_1 \frac{ x_3}{J_3}\right),\] the square of the
eigenvalue: \[\mu^2= -32\lambda \left(\left(J_1 J_3-c_1
x_3\right){}^2+\left(\left(J_1^2+J_2^2\right) -c_1 \frac{J_1
x_3}{J_3}\right){}^2+J_2^2 \left(\varkappa c_1^2
+J_3^2\right)\right).\]

\end{enumerate}

Three-parameter families:

\begin{enumerate}

\setcounter{enumi}{3}

\item Family \ref{Rank1SeriesC1}. The coefficient of proportionality: \[\lambda = 2 \left(\varkappa c_1^2
+J_1^2-2 c_1 x_1 \right),\] the square of the eigenvalue:
\[\mu^2=-32 \lambda  \left(\left(J_1^2-c_1 x_1\right){}^2+\left(J_1
J_3-c_1 x_3\right){}^2\right)\]

\item Family \ref{Rank1SeriesC2}. In this item we assume that $x_2 \ne 0$ since all point from the family
\ref{Rank1SeriesC2} that satisfy the condition $x_2=0$ either have
rank $0$ or belong to the family \ref{Rank1SeriesC3}. The
coefficient of proportionality:
\[\lambda =  2(\varkappa c_1^2 -J_1^2 + J_2^2)+ \frac{4}{x_2} J_1
J_2 \left( x_1 - \varkappa c_1 \right).  \] If in addition $x_1 \ne
\frac{\varkappa c_1^2 +J_1^2-J_2^2}{2 c_1}$, then the square of the
eigenvalue is equal to: \[\mu^2 = \frac{ 16 \lambda^2 J_2 \gamma}
{x_2 \left(\varkappa c_1^2 +J_1^2-J_2^2 -2 c_1 x_1\right)},\] where
\[ \gamma = \left(c_1 J_1 x_2^2 -J_1 \left(x_1- \varkappa c_1
\right) \left(J_1^2-J_2^2 -c_1 x_1\right) - J_2 x_2 \left(\varkappa
c_1^2 +J_1^2-J_2^2 -2 c_1 x_1\right)\right).
\]

If $x_1 = \frac{\varkappa c_1^2  +J_1^2-J_2^2}{2 c_1}$, then either
$x_2 = \frac{J_1 J_2}{c_1}$ or $x_2 = \pm \frac{\varkappa c_1^2
-J_1^2+J_2^2}{2 c_1}$. In the first case $\mu=0$, in the second case
\[ \mu^2 = -32 J_2^2 \lambda \left(\varkappa c_1^2 +\left(J_1 \mp
J_2\right){}^2\right).\]

\item Family \ref{Rank1SeriesC3}. The coefficient of proportionality:  \[\lambda = 2 \left(\varkappa c_1^2
-J_2^2 -2 c_1 x_1 \right),\] the square of the eigenvalue:
\[\mu^2=-32 \lambda  c_1^2 \left(\varkappa J_2^2  +x_1^2+x_3^2\right).\]

\end{enumerate}

\end{assertion}

\begin{proof}[Assertion \ref{A:NonDegen_Crit_Rank_1}]
We use Assertion \ref{A:NonDegen_Criterium} to prove the
nondegeneracy of points of rank $1$. The coefficients of
proportionality and the spectrum of the corresponding operators are
described in Assertion \ref{A:Rank1_Coeff_Spectr}. After thats the
nondegeneracy is proved by exhaustion. Assertion
\ref{A:NonDegen_Crit_Rank_1} is proved.
\end{proof}

\subsection{Types of bifurcation diagrams. (Case $b\ne 0$)} \label{SubS:Bif_Diag_B_Not_Zero}

In this section we show that the curves from Lemma
\ref{L:Curve_Images_B_not_0_Kappa_not_0} are positioned relative to
each other as it is shown in Fig.
\ref{Fig:Koval_U_1_A}--\ref{Fig:Koval_D_2_B} (for the corresponding
values of the parameters $a$ and $b$). Thereby we actually describe
all possible bifurcation diagrams of the momentum mapping. We are
interested in the singular points of bifurcation diagrams, that is
in the cusps, in the points of intersection and tangency of these
curves in the first place.

First of all, it is obvious that for any values of the parameters
$a$ and $b$ the parabolas \eqref{Eq:Left Parabola} and
\eqref{Eq:Right Parabola} intersect the line  $k=0$ at the points
\[h= \varkappa c_1^2 +\frac{a}{\varkappa } -\frac{\sqrt{a^2-4
\varkappa b^2 }}{\varkappa }, \qquad  \text{ and } \qquad h=
\varkappa c_1^2 +\frac{a}{\varkappa } +\frac{\sqrt{a^2-4 \varkappa
b^2 }}{\varkappa } \] respectively and intersect each other at the
point \begin{equation} h= \varkappa c_1^2 +\frac{a}{\varkappa } ,
\qquad k= \frac{a^2-4 \varkappa b^2 }{\varkappa^2 }.\end{equation}
Thus, it remains to describe the relative position of the curve
\eqref{Eq:Parametric Curve} with respect to the line $k=0$ and the
parabolas \eqref{Eq:Left Parabola}, \eqref{Eq:Right Parabola} .

In this section we first describe this curve \eqref{Eq:Parametric
Curve} (see Assertion \ref{A:Parametric_Curve_Description}), then we
determine the number of its points of intersection with the
parabolas described above (see Assertion
\ref{A:Paramtric_And_Parabolas}) and the line $k=0$ (see Assertion
\ref{A:Parametric_Curev_Number_of_Zeroes}). After that the rest of
the section is devoted to the study of the mutual interposition of
the found ``singular'' points. The final result can be formulated
as.

\begin{lemma} \label{L:Bif_Diag_Weak}
The functions $f_k, f_r, f_m, f_t$ and $f_l$ given by the formulas
\eqref{Eq: Function_F_K}, \eqref{Eq:Function_F_r},
\eqref{Eq:Function_F_m}, \eqref{Eq:Triple_Intersect} and
\eqref{Eq:Function_F_l} respectively divide the area $\{b>0, a> 2
\sqrt{\varkappa} b\}$ into $9$ sub-areas. For each of these
sub-areas the cusps, the point of intersection and tangency of the
line $k=0$, the parabolas \eqref{Eq:Left Parabola}, \eqref{Eq:Right
Parabola} and the curve \eqref{Eq:Parametric Curve} are located on
this four curves as it is shown in Fig.
\ref{Fig:Koval_U_1_A}--\ref{Fig:Koval_D_2_B}.
\end{lemma}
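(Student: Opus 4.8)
The plan is to track the configuration of the four curves --- the line $k=0$, the two parabolas \eqref{Eq:Left Parabola} and \eqref{Eq:Right Parabola}, and the parametric curve \eqref{Eq:Parametric Curve} --- as the point $(a,b)$ varies over the region $\{b>0,\ a>2\sqrt{\varkappa}b\}$, and to show that the combinatorial type of this configuration (the number and cyclic arrangement of its singular points: the cusp, the pairwise intersections, and the tangencies) is locally constant and can change only across the graphs of $f_k, f_r, f_m, f_t$ and $f_l$.

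First I would record the qualitative shape of the four curves. The intersections of the two parabolas with the line $k=0$ and with each other are computed explicitly in the paragraph preceding the lemma; in particular the parabolas meet at a single vertex (denoted $z_5$), and each meets $k=0$ exactly once. The shape of the parametric curve \eqref{Eq:Parametric Curve} --- its single cusp at $z_{\textrm{cusp}} = \sqrt[3]{b^{2} c_1^{2}}$, its asymptotic behaviour, and its monotonicity on each of the two branches --- is supplied by Assertion \ref{A:Parametric_Curve_Description}. The remaining singular points are the intersections of the parametric curve with the two parabolas and with the line $k=0$, whose numbers are furnished by Assertions \ref{A:Paramtric_And_Parabolas} and \ref{A:Parametric_Curev_Number_of_Zeroes}.

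The heart of the argument is to identify the transition loci in the $(a,b)$-plane across which the combinatorial type changes. Such a change can occur only when two singular points coalesce, when a singular point crosses the line $k=0$, or when the orbit degenerates, and each of these is a codimension-one condition on $(a,b)$. Degeneration of the orbit, equivalently the merging of the two parabolas where $\sqrt{a^2-4\varkappa b^2}=0$, yields $a=f_l(b)$. The event where the cusp of the parametric curve lands on the line $k=0$ yields $a=f_k(b)$. A tangency of the parametric curve with a parabola yields $a=f_r(b)$. Finally, the condition that the parametric curve pass through the vertex $z_5$ where the two parabolas meet --- a triple intersection --- splits, according to the value of the curve parameter $z$ at which it occurs, into the two branches $a=f_t(b)$ and $a=f_m(b)$; one checks directly that $z=\varkappa c_1^2$ gives the branch $a=f_m(b)$. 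Solving each of these coincidence equations explicitly reproduces the corresponding formula among \eqref{Eq: Function_F_K}--\eqref{Eq:Function_F_l}, and thereby shows that the five curves are exactly the locus of all transitions.

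It then remains to assemble the global picture. I would invoke Assertion \ref{A:Functions_Interposition}, which fixes the mutual ordering of the graphs of $f_k, f_r, f_m, f_t$ and $f_l$, to conclude that these curves cut the admissible region into precisely the nine connected pieces labelled \textrm{I}--\textrm{IX} in Figures \ref{Fig:Areas_Big} and \ref{Fig:Areas_Small}. Since no transition occurs in the interior of any single piece, the configuration is constant there, so it suffices to fix it by evaluating at one representative value of $(a,b)$ in each piece and comparing with Figures \ref{Fig:Koval_U_1_A}--\ref{Fig:Koval_D_2_B}. The main obstacle is the completeness and bookkeeping of the transition analysis: one must verify that the listed conditions exhaust all ways the configuration can change --- in particular that no self-tangency or self-intersection of the parametric curve has been overlooked --- and then carry out the representative-point verification across all nine regions consistently. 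The genuinely delicate point is confirming that $a=f_r(b)$ is a true tangency (a double root of the relevant intersection equation) rather than an ordinary crossing, since that is what distinguishes the regions it separates.
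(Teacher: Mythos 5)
Your overall scheme --- the configuration of the four curves is locally constant in $(a,b)$, changes only across codimension-one coincidence loci, those loci are the graphs of $f_k,f_r,f_m,f_t,f_l$, and the nine regions are then pinned down by Assertion \ref{A:Functions_Interposition} plus one representative point per region --- is exactly the paper's scheme, which implements it through Assertions \ref{A:Parametric_Curve_Description}, \ref{A:Paramtric_And_Parabolas}, \ref{A:Parametric_Curev_Number_of_Zeroes}, \ref{A:Order_of_Points} and \ref{A:Triple_Intersect}. However, you misidentify two of the five transition loci, and since your argument hinges on the claim that ``solving each of these coincidence equations explicitly reproduces the corresponding formula,'' this is a genuine gap rather than a bookkeeping issue. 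First, $a=f_r(b)$ is \emph{not} ``a tangency of the parametric curve with a parabola.'' By Assertion \ref{A:Paramtric_And_Parabolas} the curve \eqref{Eq:Parametric Curve} has two transversal intersections \emph{and} one tangency with each parabola for every admissible $(a,b)$ with $b\neq 0$; tangency is therefore not a codimension-one condition, and solving ``tangency occurs'' yields no equation at all, certainly not \eqref{Eq:Function_F_r}. The true event at $a=f_r(b)$ is that the cusp $z_{\textrm{cusp}}=\sqrt[3]{b^2c_1^2}$ lands on one of the parabolas, so that the tangency point and the transversal intersection point collide with the cusp and exchange their order along the curve: see the rows $a=f_r(b)$ of Tables \ref{Tab:Points_on_Parametric_Curve_B_Small} and \ref{Tab:Points_on_Parametric_Curve_B_Big}, where $z_{rt}=z_{\textrm{cusp}}=z_{+r}$, respectively $z_{+l}=z_{\textrm{cusp}}=z_{lt}$. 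For the same reason your closing ``delicate point'' (verifying that $f_r$ is a true tangency rather than a crossing) is aimed at the wrong target.

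Second, $a=f_t(b)$ is \emph{not} a branch of the condition that the parametric curve pass through the vertex $z_5$ of the two parabolas. That condition produces only $a=f_m(b)$ (your check at $z=\varkappa c_1^2$ is correct for that branch). By Assertion \ref{A:Triple_Intersect}, $f_t$ is a different triple intersection: the curve \eqref{Eq:Parametric Curve}, the \emph{left} parabola \eqref{Eq:Left Parabola} and the \emph{line} $k=0$ meeting at one point. This point lies on $k=0$, whereas $z_5$ has $k=(a^2-4\varkappa b^2)/\varkappa^2>0$ on every non-degenerate orbit, so the two events cannot coincide away from $f_l$; one can also verify numerically (e.g.\ $\varkappa=c_1=1$, $t=2$, so $(a,b)=(41/4,5)\in\{a=f_t(b)\}$) that the parametric curve misses $z_5$ there. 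The upshot is that your list of transitions is both incomplete and partially vacuous: the genuine events ``cusp crosses a parabola'' (which is $f_r$) and ``the intersection point $z_{+l}$ crosses the line $k=0$'' (which is $f_t$) are absent, while one listed condition holds identically. With the transition loci misidentified, the assertion that the configuration is constant on each of the nine pieces is unproved, and the representative-point check cannot be made to match Fig.~\ref{Fig:Koval_U_1_A}--\ref{Fig:Koval_D_2_B}. The corrected bookkeeping is precisely what Assertion \ref{A:Order_of_Points} (the ordering tables, whose degenerations occur exactly at $f_r$ and $f_m$) and Assertion \ref{A:Triple_Intersect} (which handles $f_t$, and also shows no triple point with the right parabola can occur) supply in the paper.
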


We begin with a description of the curve \eqref{Eq:Parametric
Curve}.

\begin{assertion}\label{A:Parametric_Curve_Description}
Let $\varkappa \ne 0$ . Then for any $a, b \in \mathbb{R}$, $b \ne
0$, the curve \eqref{Eq:Parametric Curve} has one cusp point
\begin{equation} \label{Eq:Parametric_Point_Return}
z_{\textrm{cusp}} = \sqrt[3]{b^{2} c_1^{2}}\end{equation} and two
points of local extrema
\begin{equation} \label{Eq:Parametric_Point_Extr} z_{\textrm{+ext}}
= \frac{|b|}{\sqrt{\varkappa}} \qquad \text{ and } \qquad
z_{\textrm{-ext}} = -\frac{|b|}{\sqrt{\varkappa }}.\end{equation}

The point $z_{\textrm{-ext}}$ is a local minimum for any values of
the parameters $a$ and $b$. If $b>\varkappa^{3/2} c_1^2$, then the
point $z_{\textrm{+ext}}$ is a local maximum. If $b<\varkappa^{3/2}
c_1^2$, then the point $z_{\textrm{+ext}}$ is a local minimum. (If
$b=\varkappa^{3/2} c_1^2$, then the point $z_{\textrm{+ext}}$
coincides with the cusp $z_{\textrm{cusp}}$.) In other words, the
function $k(z)$ monotonically increases between the points
$z_{\textrm{+ext}}$ and $z_{\textrm{cusp}}$ and monotonically
decreases on the remaining parts of the ray $z>0$.

The graph of the corresponding function is convex upward for
$z<z_{\textrm{cusp}}$ and convex downward for $z>
z_{\textrm{cusp}}$.

As $z \to \pm \infty$ the curve \eqref{Eq:Parametric Curve}
asymptotically tends to the line \[k= - 2\varkappa c_1^2 h+ (4 a
c_1^2 \varkappa^2 c_1^4) .\] Moreover, as $z\to \pm 0$ both
functions $h(z)$ and $k(z)$ simultaneously tend to $+ \infty$ and
besides \[\frac{k(z)}{h^2(z)} \underset{z\rightarrow \pm
0}\longrightarrow 1.\] \end{assertion}

We now describe the points of intersection of the curve
\eqref{Eq:Parametric Curve} with the other curves: with the
parabolas \eqref{Eq:Left Parabola} and \eqref{Eq:Right Parabola} and
the line $k=0$. We start with the points of intersection with the
parabolas. The proof of the following assertion is by direct
computation.

\begin{assertion} \label{A:Paramtric_And_Parabolas}
Let $\varkappa \ne 0$ and $b \ne 0$. Then the curve
\eqref{Eq:Parametric Curve} and the left parabola \eqref{Eq:Left
Parabola} have two points of intersection and one point of tangency.
For the points of intersection the corresponding values of the
parameter $z_{+l}$ and $z_{-l}$ are given by the relation
\begin{equation} \label{Eq:Parametric_Point_Left_Intersect} z^2 =
\frac{a + \sqrt{a^2-4 \varkappa b^2 }}{2}c_1^2.\end{equation} The
tangency point corresponds to the value of the parameter
\begin{equation} \label{Eq:Parametric_Point_Left_Tangent} z_{lt} =
\frac{a - \sqrt{a^2-4 \varkappa b^2}}{2 \varkappa }.\end{equation}

Analogously, the curve \eqref{Eq:Parametric Curve} and the right
parabola \eqref{Eq:Right Parabola} have two points of intersection
with the corresponding values of the parameter $z_{+r}$ and $z_{-r}$
given by the relation \begin{equation}
\label{Eq:Parametric_Point_Right_Intersect} z^2 = \frac{a -
\sqrt{a^2-4 \varkappa b^2 }}{2}c_1^2 \end{equation} and one point of
tangency corresponding to the value of the parameter
\begin{equation} \label{Eq:Parametric_Point_Right_Tangent} z_{rt} = \frac{a + \sqrt{a^2-4 \varkappa b^2}}{2
\varkappa }.\end{equation}

\end{assertion}

\begin{remark}
In Assertion \ref{A:Paramtric_And_Parabolas} (and further in the
text) we assume that $z_{+l}>0$ and $z_{+r}>0$. As a consequence
$z_{-l}<0$ and $z_{-r}<0$.
\end{remark}

Now let us find the number of intersection points of the line $k=0$
and the curve \eqref{Eq:Parametric Curve}.

\begin{assertion} \label{A:Parametric_Curev_Number_of_Zeroes} Suppose that $\varkappa > 0$. Then the number of intersection points of the line $k=0$
and the curve  \eqref{Eq:Parametric Curve} depends on the values of
the parameters $a$ and $b$ as follows. Consider the function
\[ f_k(b) = \frac{3 b^{4/3}+6 \varkappa b^{2/3} c_1^{4/3} -
\varkappa ^2c_1^{8/3} }{4 c_1^{2/3}}. \]

\begin{enumerate}
\item Assume that $b> \varkappa^{3/2} c_1^2$. If $a> f_k(b)$, then
the line $k=0$ and the curve \eqref{Eq:Parametric Curve} have
exactly $3$ points of intersection. If $a< f_k(b)$, then there is
only $1$ point of intersection and if $a= f_k(b)$, then there are
$2$ points of intersection.

\item Assume that $0< b < \varkappa^{3/2} c_1^2$. If $a< f_k(b)$,
then the line $k=0$ and the curve \eqref{Eq:Parametric Curve} have
exactly $3$ points of intersection. If $a> f_k(b)$,  then there is
only $1$ point of intersection and if $a= f_k(b)$, then there are
$2$ points of intersection.

\item If $b = \varkappa^{3/2} c_1^2$, then the line $k=0$ and the curve \eqref{Eq:Parametric Curve} have exactly $1$
point of intersection for any value of the parameter $a$.

\end{enumerate}

\end{assertion}

The results of Assertion \ref{A:Parametric_Curev_Number_of_Zeroes}
are collected together in table \ref{Tab:Number_of_Zeroes}.

\begin{table}[h]
\centering
\begin{tabular}{| l || c | c | c |}
  \hline
  & $0<b^2<\varkappa^3 c_1^4$ & $b^2=\varkappa^3 c_1^4$ & $b^2> \varkappa^3 c_1^4$ \\
  \hline \hline
  $a> f_k(b)$ & 1 & 1 & 3 \\
  \hline
  $a= f_k(b)$ & 2 & 1 & 2\\
  \hline
  $a<f_k(b)$ & 3 & 1 & 1\\
\hline
\end{tabular}\caption{Number of intersection points of the curve \eqref{Eq:Parametric Curve} and the line $k=0$.}
  \label{Tab:Number_of_Zeroes}
\end{table}

\begin{proof}
It is clear that there is no points of intersection if $z<0$ since
\[k(z) = 4 a c_1^2 -4 \varkappa c_1^2 z -\frac{4 b^2
c_1^2}{z}+\left(\frac{b^2 c_1^2}{z^2}- \varkappa c_1^2 \right)^2>
0.\] It follows from Assertion \ref{A:Parametric_Curve_Description}
that the function $k(z)$ has two local extrema if $z>0$:
\[z_{\textrm{+ext}} = \frac{b}{\sqrt{\varkappa }} \qquad \text{and}
\qquad z_{\textrm{cusp}} = \sqrt[3]{b^{2} c_1^{2}}.\] It remains to
examine the location of these extrema and whether the value of the
function $k(z)$ at these points is greater than zero. Assertion
\ref{A:Parametric_Curev_Number_of_Zeroes} is proved.
\end{proof}

Thus we found the points $z_{\textrm{cusp}}, z_{\pm \textrm{ext}},
z_{\pm l}, z_{\pm r}, z_{lt}$ and $z_{rt}$ on the curve
\eqref{Eq:Parametric Curve} given by the formulas
\eqref{Eq:Parametric_Point_Return},
\eqref{Eq:Parametric_Point_Extr},
\eqref{Eq:Parametric_Point_Left_Intersect},
\eqref{Eq:Parametric_Point_Left_Tangent},
\eqref{Eq:Parametric_Point_Right_Intersect} and
\eqref{Eq:Parametric_Point_Right_Tangent} respectively. We now
determine the order of these points on the $z$-axis. It is obvious
that \[z_{-l} < z_{\textrm{-ext}} < z_{-r} <0, \] and that the value
of the parameter $z$ for the other described points is greater than
zero. The next assertion is proved by direct calculation.

\begin{assertion} \label{A:Order_of_Points}
Let $\varkappa>0$. Then, depending on the values of the parameters
$a$ and $b$, the points $z_{\textrm{cusp}}, z_{+\textrm{ext}},
z_{+l}, z_{+r}, z_{lt}$ and $z_{rt}$ (described in Assertions
\ref{A:Parametric_Curve_Description} and
\ref{A:Paramtric_And_Parabolas}) are arranged on the ray $z>0$ so as
it is described in Tables
\ref{Tab:Points_on_Parametric_Curve_B_Small} and
\ref{Tab:Points_on_Parametric_Curve_B_Big}. Here functions $f_r(b)$
and $f_m(b)$ are given by the formulas \eqref{Eq:Function_F_r} and
\eqref{Eq:Function_F_m} respectively.
\end{assertion}

\begin{table}[h]
\centering
\begin{tabular}{| l | c  |  }
\hline
  $a> f_m(b)$ & $z_{rt}>z_{+l}>z_{\textrm{cusp}}>z_{+\textrm{ext}}>z_{+r}>z_{lt}$  \\
  \hline
  $a= f_m(b)$ & $z_{rt}=z_{+l}>z_{\textrm{cusp}}>z_{+\textrm{ext}}=z_{+r}>z_{lt}$  \\
  \hline
  $f_r(b)<a<f_m(b)$ & $z_{+l}>z_{rt}>z_{\textrm{cusp}}>z_{+r}>z_{+\textrm{ext}}>z_{lt}$ \\
\hline
  $a=f_r(b)$ & $z_{+l}>z_{rt}=z_{\textrm{cusp}}=z_{+r}>z_{+\textrm{ext}}>z_{lt}$ \\
\hline
  $a<f_r(b)$  & $z_{+l}>z_{+r}>z_{\textrm{cusp}}>z_{rt}>z_{+\textrm{ext}}>z_{lt}$
\\  \hline
\end{tabular}\caption{Interposition of points of the curve \eqref{Eq:Parametric Curve} for $0<b^2<\varkappa^3 c_1^4$.}
  \label{Tab:Points_on_Parametric_Curve_B_Small}
\end{table}

\begin{table}[h]
\centering
\begin{tabular}{| l | c |   }
  \hline
  $a> f_m(b)$ &
$z_{rt}>z_{+l}>z_{+\textrm{ext}}>z_{\textrm{cusp}}>z_{+r}>z_{lt}$ \\
  \hline
  $a= f_m(b)$ &
$z_{rt}>z_{+l}=z_{+\textrm{ext}}>z_{\textrm{cusp}}>z_{+r}=z_{lt}$ \\
  \hline
  $f_r(b)<a<f_m(b)$ &
$z_{rt}>z_{+\textrm{ext}}>z_{+l}>z_{\textrm{cusp}}>z_{lt}>z_{+r}$ \\
\hline
  $a=f_r(b)$ &
$z_{rt}>z_{+\textrm{ext}}>z_{+l}=z_{\textrm{cusp}}=z_{lt}>z_{+r}$ \\
\hline
  $a<f_r(b)$  &
$z_{rt}>z_{+\textrm{ext}}>z_{lt}>z_{\textrm{cusp}}>z_{+l}>z_{+r}$
\\  \hline
\end{tabular}\caption{Interposition of points of the curve \eqref{Eq:Parametric Curve} for  $b^2> \varkappa^3 c_1^4$ .}
  \label{Tab:Points_on_Parametric_Curve_B_Big}
\end{table}

We now describe when three curves intersect at one point.

\begin{assertion}\label{A:Triple_Intersect}
Suppose that $\varkappa >0$, $b \ne 0$ and $a^2 - 4 \varkappa b^2
>0$. Then the line $k=0$, the curve \eqref{Eq:Parametric Curve} and the
right parabola \eqref{Eq:Right Parabola} can not intersect at one
point. The line $k=0$, the curve \eqref{Eq:Parametric Curve} and the
left parabola \eqref{Eq:Left Parabola} intersect at one point if and
only if
\begin{equation} \label{Eq:Triple_IntersectB} \begin{gathered} a = \left(\frac{\varkappa c_1^2 + t^2}{2 c_1} \right)^2 + \varkappa t^2,
\qquad b =t \left(\frac{\varkappa c_1^2 + t^2}{2 c_1} \right)
\end{gathered} \end{equation} for some $t \in \mathbb{R}$.
\end{assertion}

Note that in the formula \eqref{Eq:Triple_IntersectB} the parameter
$a$ is uniquely determined by $b$, therefore the formula
\eqref{Eq:Triple_IntersectB} defines a function, which we denoted by
$a= f_t(b)$ (see the formula \eqref{Eq:Triple_Intersect}).

\begin{proof}[of Assertion \ref{A:Triple_Intersect}]
It is not hard to check that under the conditions of the assertion
the points of rank $1$ that belong to the families
\ref{Rank1SeriesA}, \ref{Rank1SeriesB} and also to one of the
families \ref{Rank1SeriesC}, \ref{Rank1SeriesC1},
\ref{Rank1SeriesC2} and \ref{Rank1SeriesC3} are the points \[x_1 =
\frac{\varkappa c_1^2 + J_1^2}{2c_1}, \quad J_2 =0, \quad J_3 =0,
\quad x_2 =0, \quad x_3 = 0.\] For these points the equality
\eqref{Eq:Triple_IntersectB} holds with $t= J_1$. Therefore, if the
equality \eqref{Eq:Triple_IntersectB} holds, then the curves
intersect at one point.

We now show that if three curves intersect at one point, then the
equality \eqref{Eq:Triple_IntersectB} holds. It is not hard see that
the only point of the curve \eqref{Eq:Parametric Curve} that can be
a point of triple intersection is its point of intersection with the
left parabola $z_{+l}$ (it also easily follows from geometrical
considerations). Since by assumption the curves intersect at one
point the equality
\[\varkappa c_1^2 +\frac{a}{\varkappa } -\frac{\sqrt{a^2-4 \varkappa
b^2 }}{\varkappa } = \frac{b^2 c_1^2}{z_{+l}^2} + 2 z_{+l} \] holds.
Substituting the equality \eqref{Eq:Parametric_Point_Left_Intersect}
we get \begin{equation}\label{Eq:Triple_Z_Form} (z- \varkappa
c_1^2)^2 = \sqrt{a^2 -4 \varkappa b^2} c_1^2. \end{equation} If we
put \[z= \frac{t^2 + \varkappa c_1^2}{2},\] where $\text{sgn}(t) =
\text{sgn}(b)$, then we get
\begin{equation} \label{Eq:Triple_ab_comb_Form} \sqrt{a^2 -4
\varkappa b^2} = \frac{(t^2 - \varkappa c_1^2)^2}{4c_1^2}.
\end{equation} Substituting \eqref{Eq:Triple_Z_Form} and \eqref{Eq:Triple_ab_comb_Form}
in the formula \eqref{Eq:Parametric_Point_Left_Intersect} we obtain
the required expression for $a$ from the formula
\eqref{Eq:Triple_IntersectB}. This immediately implies the formula
\eqref{Eq:Triple_IntersectB} for $b$. Assertion
\ref{A:Triple_Intersect} is proved.
\end{proof}

It follows from Assertion \ref{A:Parametric_Curve_Description},
\ref{A:Paramtric_And_Parabolas},
\ref{A:Parametric_Curev_Number_of_Zeroes}, \ref{A:Order_of_Points}
and \ref{A:Triple_Intersect} that the interposition of the curves
from Lemma \ref{L:Curve_Images_B_not_0_Kappa_not_0} qualitatively
differs depending on position of the parameters $a$ and $b$ with
respect to the functions $f_k, f_r, f_m, $ and $f_t$, given by the
formulas \eqref{Eq: Function_F_K}, \eqref{Eq:Function_F_r},
\eqref{Eq:Function_F_m} and \eqref{Eq:Triple_Intersect}
respectively.

Since in the case $\varkappa>0$ the values of the parameters $a$ and
$b$ always satisfy the inequality $a^2 - 4 \varkappa b^2 \geq 0$ we
also consider the function
\[ f_l (b) = 2 \sqrt{\varkappa} |b|. \]

We now describe how the graphs of the functions $f_k, f_r, f_m, f_t$
and $f_l$ are positioned relative to each other. The next assertion
is proved by direct calculation.

\begin{assertion} \label{A:Functions_Interposition}
Let $\varkappa > 0$. Denote by $\alpha_0$ the only root of the
equation $x^3 + x^2 + x -1 =0$. The graphs of functions $f_k, f_r,
f_m, f_t$ and $f_l$ given by the formulas \eqref{Eq: Function_F_K},
\eqref{Eq:Function_F_r}, \eqref{Eq:Function_F_m},
\eqref{Eq:Triple_Intersect} and \eqref{Eq:Function_F_l} respectively
are symmetrical about the axis $b=0$ and in the case $b>0$  they are
positioned as it is shown in Fig. \ref{Fig:Areas_Big} and
\ref{Fig:Areas_Small}. In other words, for $b>0$ all the graphs
intersect at the point
\[M = (\varkappa^{3/2} c_1^2, \, \, 2 \varkappa^2 c_1^2),\] and the
graphs of functions $f_r$ and $f_t$ intersect at the point
\[N = (\alpha_0^3 \varkappa^{3/2} c_1^2, \, \, f_r(\alpha_0^3
\varkappa^{3/2} c_1^2)).\] Further, if $0< b^2 < \alpha_0^6
\varkappa^3 c_1^4$, then
\[f_k < f_l < f_r < f_t < f_{m}.\] If $\alpha_0^6 \varkappa^3
c_1^4< b^2< \varkappa^3 c_1^4$, then
\[f_k < f_l < f_t< f_r < f_{m}.\] And if $b^2 > \varkappa^3 c_1^4$,
then \[f_l < f_t < f_k < f_r < f_{m}.\] \end{assertion}

\begin{proof}[of Lemma \ref{L:Bif_Diag_Weak}] Lemma \ref{L:Bif_Diag_Weak} easily follows from
earlier proved Assertions \ref{A:Parametric_Curve_Description},
\ref{A:Paramtric_And_Parabolas},
\ref{A:Parametric_Curev_Number_of_Zeroes}, \ref{A:Order_of_Points},
\ref{A:Triple_Intersect} and simple geometric considerations.
\end{proof}

\subsection{Critical points of rank 0} \label{SubS:Critical_Points_Zero_Rang}

In this section we prove Lemmas \ref{L:RankZeroType} and
\ref{L:RankZeroType_BZero} about types of critical points of rank
$0$. At first we describe all critical points of the momentum
mapping of rank $0$ and then we find their types and images under
the momentum mapping. Recall that on non-singular orbits (that is,
on orbits $M_{a, b}$ such that $a^2 - 4 \varkappa b^2 >0$)
non-singular points of rank $0$ are precisely the points at which
both Hamiltonian vector field $X_H$ and $X_K$ vanish. Let us
emphasize that the following statement holds for any value of the
parameter $\varkappa \in \mathbb{R}$.

\begin{assertion}\label{A:Rank_0_Preimage}
The set of points where both Hamiltonian vector fields $X_H$ and
$X_K$ with Hamiltonians \eqref{Eq:Hamiltonian} and
\eqref{Eq:First_Integral} respectively vanish is the union of the
following (two-parameter) families of points in $\mathbb{R}^6(J,
x)$:

\begin{enumerate}

\item $(J_1, J_2, 0, \varkappa c_1, 0, 0),$

\item $(J_1, 0, 0, x_1, 0, 0),$
\item $(J_1, 0, J_3, \frac{\varkappa c_1^2 +J_1^2 }{2c_1}, 0, \frac{J_1
J_3}{c_1}).$

\end{enumerate}

\end{assertion}

\begin{proof} The vector field $X_H$ has the following coordinates: \begin{gather*} \{J_1, H\} = -2 J_
2 J_ 3, \qquad \{J_2, H\} = 2 J_ 1 J_ 3-2 c_ 1 x_ 3 \\ \{J_3,
H\} = 2 c_ 1 x_ 2, \qquad \{x_1, H\} = 2 J_ 2 x_ 3-4 J_ 3 x_2 \\
\{x_2, H\} =  4 J_ 3 x_ 1-2 J_ 1 x_ 3 -2 \varkappa c_ 1 J_ 3, \qquad
\{x_3, H\} =  2 J_ 1 x_ 2 -2 J_ 2 x_ 1 +2 \varkappa c_ 1 J_ 2
\end{gather*}
It is easy to see that $x_2 = 0$ and that either $J_2 =0$ or $J_3=0,
x_3=0$. The rest of the proof is by exhaustion.
\end{proof}

\begin{proof}[of Lemmas \ref{L:RankZeroType} and \ref{L:RankZeroType_BZero}]

It can be proved by direct calculation that the images of the points
of rank $0$ lie on these curves. It is possible to explicitly find
all the points from the \ref{I:CritRank0Image_2Parab} and
\ref{I:CritRank0Image_ParamParab} series on each orbit $M_{a, b}$.
We now prove the statement about the number of points for the
\ref{I:CritRank0Image_ParamK0} series on the orbit $M_{a, b}$. The
case $b=0$ is trivial, hence we assume that $b \ne 0$. It is not
hard to verify that the set of points (or rather the corresponding
values of the parameters $J_1$ and $J_3$) is given by the following
system of equations \begin{equation}
\label{Eq:CritRank0Image_ParamK0}
\begin{gathered}
J_1^5  -2 \varkappa c_1^2 J_1^3  -4 b c_1 J_1^2 +  \left(4 a c_1^2 +
\varkappa ^2 c_1^4 \right) J_1 -4 \varkappa b c_1^3   =0 \\ J_3^2 =
\frac{2 b c_1- \varkappa c_1^2 J_1  -J_1^3}{2 J_1}.
\end{gathered} \end{equation} It is clear that there are exactly two
points in the preimage for each point in the image (the coordinates
$J_1$ for these points coincide and the coordinates $J_3$ are
opposite). In order to find the exact number of solutions let us
first divide the set of parameters $(a, b)$ into areas for which
this number is constant and then solve this problem for each of the
areas. Let us slightly simplify the equation
\eqref{Eq:CritRank0Image_ParamK0} by putting \[\hat{b} =
\frac{b}{\varkappa^{3/2}c_1^2}, \qquad \hat{a} =\frac{a}{\varkappa^2
c_1^2}, \qquad s = \frac{J_1}{\varkappa^{1/2}c_1}.\] Then the number
of points in the image is equal to the number of solutions of the
equation
\begin{equation} \label{Eq:ParamK0_Simple5} s^5 - 2\hat{b} s^3 - 8
\hat{b} s^2 + (4\hat{a} +1) s - 4\hat{b}=0\end{equation} on the
segment
\begin{equation} \label{Eq:ParamK0_SimpleCond} 0\leq
 s+ s^3 \leq 2\hat{b}.\end{equation} Note that the function $s+ s^3$ is monotonically increasing, so the
inequality \eqref{Eq:ParamK0_SimpleCond} defines a segment. Note
that if we vary the parameters $a$ and $b$, then the number of
solutions can change only in the following cases:
\begin{enumerate}
\item The equation \eqref{Eq:ParamK0_Simple5} has multiple roots.

\item One of the endpoints of the segment
\eqref{Eq:ParamK0_SimpleCond} is a solution of the equation
\eqref{Eq:ParamK0_Simple5}. (It is easy to see that the point $0$
can not be a root of the equation \eqref{Eq:ParamK0_Simple5},
therefore we only need to check the point $s+s^3 = 2 \hat{b}$).
\end{enumerate}

It can be verified that the point  $s+s^3 = 2 \hat{b}$ is a solution
of the equation \eqref{Eq:ParamK0_Simple5} if and only if $a =
f_t(b)$ (recall that the function $f_t(b)$ is defined by the formula
\eqref{Eq:Triple_Intersect} and that $a =f_t(b)$ if and only if
three curves of the bifurcation diagram intersect at one point, see
Assertion \ref{A:Triple_Intersect}). It can also be verified that in
a sufficiently small neighbourhood of any point of the curve
$a=f_t(b)$ (except, maybe, for the points at which the equation
\eqref{Eq:ParamK0_Simple5} has multiple roots) the points in the
area $a>f_t(b)$ have one more point in the preimage than the points
in the area $a<f_t(b)$.

Further, it is easy to verify that the equation
\eqref{Eq:ParamK0_Simple5} has multiple roots in the following
cases: either $a = \pm 2 \sqrt{\varkappa} b$ or $a = f_k(b)$, where
$f_k(b)$ is given by the formula \eqref{Eq: Function_F_K}. In the
case $a = 2 \sqrt{\varkappa} |b|$ the only multiple root is $s=1$ of
multiplicity $2$. It follows that for $b^2< \varkappa^3 c_1^4$ and
$2 \sqrt{\varkappa} |b|< a<f_t(b)$ there is no point from the
\ref{I:CritRank0Image_ParamK0} series in the preimage and for $b^2>
\varkappa^3 c_1^4$ and $2 \sqrt{\varkappa} |b|< a<f_t(b)$ there are
exactly two point from the \ref{I:CritRank0Image_ParamK0} series in
the preimage. Since the number of points in the preimage increases
by $1$ when passing through the curve $a=f_t(b)$ by increasing the
parameter $a$ (for a fixed $b$) we conclude that in the area $a>
f_t(b), a> f_k(b)$ there is exactly one point in the preimage and
there are three points in the preimage for the area $f_k(b)>
a>f_t(b)$.

Thus the statement about the number of points on each orbit is
proved. It remains to prove about the statement about their types.
It is not hard to do using the following criteria (for more details
see \cite{BolsinovFomenko99}).

\begin{assertion} \label{A:NonDegen_Criterium_Rank_0}
Consider a symplectic manifold $(M^4, \omega)$ and suppose that $x_0
\in (M, \omega)$ is a critical point of rank $0$ for an integrable
Hamiltonian system with Hamiltonian $H$ and integral $K$. Then the
point $x_0$ is nondegenerate if and only if the linearizations $A_H$
and $A_K$ of the Hamiltonian vector fields $X_H$ and $X_K$ at the
point $x_0$ satisfy the following properties: \begin{enumerate}
\item the operators $A_H$ and $A_K$ are linearly independent,

\item there exists a linear combination $\lambda A_H + \mu A_K$ such that all its eigenvalues are different and not equal to $0$.

\end{enumerate}

Moreover, if the point $x_0$ is nondegenerate, then its type is
completely determined by the spectrum of any linear combination
$\lambda A_H + \mu A_K$ that has no zero eigenvalues. More precisely
the type of the point depends on the type of the spectrum as
follows.

\begin{itemize}

\item If the spectrum of a linear combination $\lambda A_H + \mu A_K$ has the form $\alpha, - \alpha, \beta,
-\beta$, where $\alpha, \beta \in \mathbb{R}-\{0\}$, then the point
$x_0$ is a critical point of saddle-saddle type.

\item If the spectrum has the form $i\alpha, - i\alpha, i\beta,
-i\beta$, where $\alpha, \beta \in \mathbb{R}-\{0\}$, then the point
$x_0$ is a critical point of center-center type.

\item If the spectrum has the form $i\alpha, - i\alpha, \beta,
-\beta$, where $\alpha, \beta \in \mathbb{R}-\{0\}$, then the point
$x_0$ is a critical point of center-saddle type.

\item If the spectrum has the form $\alpha + i\beta, \alpha - i\beta, -\alpha + i\beta,
-\alpha -i\beta$, where $\alpha, \beta \in \mathbb{R}-\{0\}$, then
the point $x_0$ is a critical point of focus-focus type.

\end{itemize}

\end{assertion}

We compute the spectrum of the linearization of the Hamiltonian
vector field $X_H$ with Hamiltonian \eqref{Eq:Hamiltonian} using
Assertion \ref{A:HamVectorLinear_Poisson}.

\begin{assertion} \label{A:Spectr_Ham_Rank0}

For all three series of critical points of rank $0$ from Assertion
\ref{A:Rank_0_Preimage} the spectrum of the linearization of the
Hamiltonian vector field $X_H$ with Hamiltonian
\eqref{Eq:Hamiltonian} contains a zero eigenvalue with multiplicity
$2$. The spectrum also contains the following elements:

\begin{enumerate}

\item For the \ref{I:CritRank0Image_2Parab} series of critical points of rank
$0$ the spectrum also contains the eigenvalues $\pm \sqrt{2}
\sqrt{\alpha \pm \sqrt{\alpha^2 + 4 \varkappa \beta^2}}$, where
\begin{gather*} \alpha = \varkappa c_1^2 - J_1^2 - J_2^2 = (2
\varkappa c_1^2 - \frac{a}{\varkappa} ) \\ \beta = c_1 J_2 = c_1^2 a
- \varkappa^2 c_1^4 - \frac{b^2}{\varkappa}\end{gather*}

\item For the \ref{I:CritRank0Image_ParamParab} series the spectrum also contains the eigenvalues \[\pm 2\sqrt{c_1 (x_1 -
\varkappa c_1)} \quad \text{and} \quad \pm 2 \sqrt{-J_1^2 +2 c_1 x_1
- \varkappa c_1^2}\]

\item For the \ref{I:CritRank0Image_ParamK0} series the spectrum also contains the eigenvalues \[\pm 4 i J_3 \quad \text{and} \quad \pm
\sqrt{2} \sqrt{J_1^2 - 2 J_3^2 - \varkappa c_1^2}\]

\end{enumerate}

\end{assertion}

We further note that in order to prove the nondegeneracy of the
points it suffices to verify that all the $4$ eigenvalues of the
operator $A_H$ are not equal to $0$ and that there exists $\lambda
\in \mathbb{R}$ such that the spectrum of the operator $A_F$, where
$F=K +\lambda H$, has exactly $2$ non-zero eigenvalues. Indeed, then
the restrictions of the operators $A_H$ and $A_K$ are linearly
independent since otherwise the spectrum of any linear combination
is obtained from the spectrum of the operator $A_H$ by a
multiplication by a constant (it follows from Assertion
\ref{A:HamVectorLinear_Poisson}). Moreover, there is no need to
verify that all the eigenvalues of the operator $A_H$ are distinct
since in this case the spectrum of some linear combination $H + \mu
F$ has $4$ different non-zero eigenvalues.

For the series \ref{I:CritRank0Image_ParamParab} and
\ref{I:CritRank0Image_ParamK0} the required coefficient of
proportionality  $\lambda$ has already been found in Assertion
\ref{A:Rank1_Coeff_Spectr} (we can put $\lambda = 2 \left(\varkappa
c_1^2 -J_1^2\right)$ and $\lambda = 0$ for the series
\ref{I:CritRank0Image_ParamParab} and \ref{I:CritRank0Image_ParamK0}
respectively). For the series \ref{I:CritRank0Image_2Parab} we can
use the fact that $k= \frac{\lambda^2}{4}$ for the critical points
in the preimage of points of the parabolas \eqref{Eq:Left Parabola}
and \eqref{Eq:Right Parabola} and formally put $\lambda =
\sqrt{\frac{k}{2}}$. Then for the function $F= K +\sqrt{\frac{k}{2}}
H$ the spectrum of the operator $A_F$ consists only of $4$ zeroes
and \[ \pm 4 \sqrt{2}\sqrt{\frac{4 \varkappa b^2-a^2 }{\varkappa^2}
\left( \frac{a^2 -2 \varkappa c_1^2}{\varkappa} + \sqrt{\frac{a^2 -4
\varkappa b^2}{\varkappa^2}} \right)}. \]

It is now not hard to check the nondegeneracy of points from Lemmas
\ref{L:RankZeroType} and \ref{L:RankZeroType_BZero}. (Note that the
degenerate points appear only at the boundaries of the areas of the
plane $\mathbb{R}^2(a, b)$ and their degenerations are related to
the restructurization of the bifurcation diagrams in a neighbourhood
of these points).

The types of critical points can be easily found using Assertions
\ref{A:NonDegen_Criterium_Rank_0} and \ref{A:Spectr_Ham_Rank0}.
Lemmas \ref{L:RankZeroType} and \ref{L:RankZeroType_BZero} are
proved.
\end{proof}

\subsection{Proof of Theorems \ref{T:Bif_Diag},
\ref{T:Bifurcations} and \ref{T:Molecules}}
\label{SubS:Proof_Main_Theorems}

Theorems \ref{T:Bif_Diag}, \ref{T:Bifurcations} and
\ref{T:Molecules} easily follow from earlier-proved Lemmas
\ref{L:Curve_Images_B_not_0_Kappa_not_0}, \ref{L:RankZeroType},
\ref{L:Curve_Images_B_Zero_Kappa_not_0}, \ref{L:RankZeroType_BZero}
and the following simple geometric considerations.
\begin{enumerate}
\item First, we use the fact that the orbits of the coadjoint
representation $M_{a, b}$ of the Lie algebra $\textrm{so}(4)$ are
compact. In particular, since the image of a compact set is compact,
it allows us to discard all unlimited domains during the
construction of bifurcation diagrams.

\item Second, we use some well known results about nondegenerate
singularities. All the required statements are described in detail
in the book \cite{BolsinovFomenko99} so we only briefly recall them.

\begin{enumerate}

\item There exists only one, up to Liouville equivalence, singular point
of center-center type. The bifurcation diagram in a neighbourhood of
a center-center singularity is the union of two curves emanating
from this point. The loop molecule of the singularity has the form
$A - A$ and the mark $r=0$.

\item Any singularity of center-saddle type is Liouville equivalent to a direct
product of a saddle atom and the elliptic atom $A$. In a
neighbourhood of an image of a center--saddle point the bifurcation
diagram is the union of a curve passing through this point and
another curve emanating from this point. The loop molecule is
obtained from the corresponding saddle atom by adding the atom $A$
at the end of each edge, all marks $r = \infty$. (For example the
loop molecule of the point $y_{12}$ in Table \ref{Tab:Cirle_Mol_Old}
has such form.)

\item There exists exactly $4$ singularities of saddle--saddle type of complexity
$1$ (that is, containing exactly one singular point on the leaf).
These singularities are completely determined by their loop
molecules. (The required two loop molecules for the saddle--saddle
singularities are given in Table \ref{Tab:Cirle_Mol_Old} for the
points $y_3$ and $y_7$.)

\end{enumerate}

\item Third, since we consider a two-parameter family of orbits $M_{a,
b}$ we can use the fact that some invariants of the system
continuously depend on the parameters $a$ and $b$ (for example a
small perturbation of the parameters does not change the number of
tori in the preimage of regular points from the ``same area'').

\end{enumerate}

We also use stability of bifurcations of types $A$, $B$ and $A^*$.

In this section we do not consider the classical Kovalevskaya case
($\varkappa =0$) because it was considered earlier in the paper
\cite{Harlamov88} and was described in detail in the book
\cite{BolsinovFomenko99}. Also, we do not consider the case $b=0$ in
much detail: the proof of the statements in this case is similar to
the proof in the case $b \ne 0$.

\begin{proof}[of Theorems \ref{T:Bif_Diag} and \ref{T:Bifurcations} ]

Previously it was shown that the required bifurcation diagrams of
the momentum mapping is contained in the union of curves described
in Lemma \ref{L:Curve_Images_B_not_0_Kappa_not_0}. Therefore, to
prove the theorems is remains to throw away several parts of the
described curves and determine the bifurcations for the remaining
arcs. Let us show how it can be done using the previously obtained
information about the types of critical points of rank $0$ (see
Lemmas \ref{L:RankZeroType} and \ref{L:RankZeroType_BZero}).

We prove Theorems \ref{T:Bif_Diag} and \ref{T:Bifurcations} for the
values of the parameters $a$ and $b$ from the area $I$, that is in
the case $|b|> \varkappa^{3/2} c_1^2$, $2 \sqrt{\varkappa} b < a<
f_t(b)$ (where the function $f_t(b)$ is given by the formula
\eqref{Eq:Triple_Intersect}). The remaining cases are treated
similarly.

In this case the bifurcation diagram of the momentum mapping should
have the form shown in Fig. \ref{Fig:Koval_U_1_A},
\ref{Fig:Koval_U_1_B}, \ref{Fig:Koval_U_1_C}. Denote by $P$, $Q$ and
$R$ the point of intersection of the curve \eqref{Eq:Parametric
Curve} with the line $k=0$, the point of intersection of the
parabolas \eqref{Eq:Left Parabola} and \eqref{Eq:Right Parabola} and
the point of intersection of the right parabola \eqref{Eq:Right
Parabola} with the line $k=0$ respectively.

First of all, we discard all unlimited domains (since the orbits of
$so(4)$ are compact) and all areas lying below the line $k=0$
(obviously, $k \geq 0$). Then notice that the ``curvilinear
triangles'' $y_{12}y_{13}P$ and $z_1 z_2 R$ do not belong to the
image of the momentum mapping. Indeed, the point $z_1$ is the
rightmost point in the image of the coadjoint orbits because all the
points in the preimage of the rightmost point on the line $k=0$ must
be of center--center type and there is no images of critical points
of rank $0$ to the right of the point $z_1$. Similarly, if the point
$P$ belonged to the image of the momentum mapping, then it would be
the leftmost and the lowest point in some of its neighbourhood.
Therefore, there would have to be points of rank $0$ in its
preimage, which is false.

Further, since we know the types of all points of rank $0$ we can
easily determine the bifurcations corresponding to all arcs that
contain an image of a critical points of rank $0$. In this case the
only ambiguity occurs for the point $y_{10}$. This point is the
image of two points of center-center type thus a priori there are
$3$ variants: for the area lying to the left of the point $y_{10}$
there are either $4$, or $2$ or $0$ tori in the preimage. Let us
show that only the first case is possible. If we increase the
parameter $a$, then in the case $f_t(b) < a < f_k(b)$ there appears
the point $y_7$ of saddle--saddle type, therefore there have to be
$4$ tori in the preimage of a point in the neighbouring camera. For
the reasons of continuity in the case $a< f_t(b)$ there should also
be $4$ tori for the camera to the left of the point $y_{10}$.

It remains to determine whether the ``curvilinear triangle'' $y_8
z_{2} Q$ and the arc $y_8 y_{13}$ belong to the bifurcation diagram
and what bifurcations correspond to these arcs. The curve $y_8 Q$
does not belong to the bifurcation diagram, and the curve $z_2 Q$
belongs since in this case there is no critical points of rank $0$
in the preimage of the point $q$. Here we use the following simple
assertion.

\begin{assertion} \label{A:Trans_Diagram_Rank_0}
Let $(M^4, \omega)$ be a  compact symplectic manifold, $H, K$ be two
commuting (with respect to the Poisson bracket) functions on $M^4$,
which are independent almost everywhere. Suppose that in a
neighbourhood of a point $x \in \mathbb{R}^2$ the bifurcation
diagram has the same structure as for the critical points of rank
$0$ of center--center type (that is, two arcs from the boundary of
the image intersect transversely) or of center--saddle type (that
is, an arc transversely intersects a smooth arc from the boundary of
the image of the momentum mapping). The there is a critical point of
rank $0$ in the preimage of the point $x$.
\end{assertion}

It remains to show that the arcs $y_{13} y_8$ and $y_8 z_2$ belong
to the bifurcation diagram and that the corresponding bifurcations
have types $2B$ and $2 A^*$ respectively. Let us start with the arc
$y_8 z_2$. We already know (see Assertion
\ref{A:Rank1Series2_2Cirles}) that the preimage of the points of
this arc is either empty or consists of two critical circle and in
the latter case the symmetry $(J_3, x_3) \to (-J_3, -x_3)$
interchanges these circles. First of all we show that if the
preimage consists of two critical circles, then the bifurcations
corresponding to each of them have type $A^*$. Since there are two
circles and they transform two tori into two and the system has a
symmetry, the only possible bifurcation apart from $2A^*$ is the
bifurcation $C_2$. In order to show that the latter case can not
occur let us consider the isoenergetic surfaces $H=\textrm{const}$
for the values of energy $H$ close to $h_0 = \frac{b^2
c_1^2}{z_{rt}^2} + 2 z_{rt}$, where $z_{rt}$ is given by the formula
\eqref{Eq:Parametric_Point_Right_Tangent} (this value of energy
corresponds to the point of tangency of the right parabola
\eqref{Eq:Right Parabola} and the parametric curve
\eqref{Eq:Parametric Curve}). Since there is no points where the
Hamiltonian vector field $X_H$ vanishes for the values of $H$ close
to $h_0$ the type of the isoenergetic surface does not change in a
neighbourhood of $h_0$. However, when the energy parameter increases
$H> h_0$ the isoenergetic surface is obviously disconnected:  its
rough molecule consists of two copies of $A - A$. Therefore, it is
disconnected for the lower values of $H$ as well. However, if the
bifurcation for the curve $y_8 z_2$ had type $C_2$, then the
isoenergetic surface would have been connected. Thus the only
bifurcation that can correspond to the curve $y_8 z_2$ is $2A^*$.

Now let us show that the bifurcations for the curve $y_8 z_2$ do
really exist. First of all we notice that for sufficiently large
values of the parameter $a$ (more precisely, for $a>f_m(b)$) these
bifurcations exist (and they are of type  $2A^*$). It suffices to
show that the loop molecule of the point  $y_3$ has the form shown
in Table \ref{Tab:Cirle_Mol_Old}. It is true because there are $3$
tori in the preimage of the points to the ``left'' of the point
$y_3$ and there are two tori in the preimage of the points to the
``right''. It easily follows from the analysis of the types of
critical points in the case $f_r(b)< a< f_m(b)$ and in the case $a>
f_m(b)$  it follows from the reasons of continuity. Note that for
$a> f_m(b)$  the bifurcation for the arc $y_3 z_2$ has type $2A^*$
since the critical point of rank $0$ in the preimage of the point
$z_3$ has center--saddle type and hence the bifurcation for the
curve $y_3 z_3$ has to be orientable.

From the reasons of continuity it follows that the bifurcations for
the curve $y_8 z_2$ exist in case $a< f_m(b)$. Let us describe the
last transition in more detail. On one hand, the bifurcations of the
type $A^*$ are stable, hence they survive under a small perturbation
of parameters $a$ and $b$. Therefore, the set of points $a, b$ for
which the bifurcation for the curve $y_8 z_2$ exists and has type
$2A^*$ is an open subset. On the other hand, the set of points where
the Hamiltonian vector fields $X_H$ and $X_K$ are linearly dependent
is a closed subset of $\mathbb{R}^7 = \mathbb{R}^7(\mathbf{J},
\mathbf{x}, \varkappa)$. Therefore, since the orbits of
$\textrm{so}(4)$ compact, the image of all critical points with
$\varkappa> 0$ by the mapping $(H, K, a, b, \varkappa):
\mathbb{R}^7(\mathbf{J}, \mathbf{x}, \varkappa) \to \mathbb{R}^5$ is
a closed set. Therefore, the set of points $a, b$ for which the
bifurcations for the curve $y_8 z_2$ exist is a closed subset. It
follows that these bifurcations exist and have type $2A^*$ for any
value of the parameter $a>2 \sqrt{\varkappa} b$ (and $b^2>
\varkappa^3 c_1^4$).

Finally we prove that the bifurcation corresponding to the arc $y_8
y_{13}$ has type $2B$. The arc  $y_8 y_{13}$ belongs to the
bifurcation diagram because there are $4$ tori over the ``bottom''
region and $2$ tori over the ``top'' region. The number of tori in
the areas can be easily found by a careful examination of the types
of critical points of rank $0$. For example, $2$ tori lie over the
``top'' region since the point $y_{12}$ is the image of a single
point of center-saddle type and hence the bifurcation for the curve
$y_{12} z_5$ has type $B$ (that is, one torus transforms into two).

We further note that the bifurcation corresponding to the arc $y_8
y_{13}$ consists of two identical parts. Moreover, the following
statement holds, which follows easily from Assertion
\ref{A:Rank1Series2_2Cirles} and the fact that there is no images of
the points of rank $0$ in a neighbourhood of the point $y_8$.

\begin{assertion} \label{A:Disconn_Deg_Point} The preimage of a sufficiently small neighbourhood of the singular
point $y_8$ consists of two connected components and the symmetry
$\sigma_3: (J_3, x_3) \to (-J_3, -x_3)$ interchanges these
components.\end{assertion}

Thus there are two identical bifurcations corresponding to the arc
$y_8 y_{13}$, which transform two tori into four. It follows from
considerations of continuity as previously for the bifurcations
$2A^*$ that both these bifurcations have type $B$: for $a>f_t(b)$
the bifurcation corresponding to the arc $y_7 y_8$ has type $2B$
(the loop molecule of the saddle--saddle point in the preimage of
the point $y_7$ is uniquely determined by the fact that there are
$4$ tori in the preimage of points in one of the neighbouring
cameras).

Thus we determined all the bifurcations. Theorems \ref{T:Bif_Diag}
and \ref{T:Bifurcations} are proved.
\end{proof}

\begin{proof}[of Theorem \ref{T:Molecules}]

The structure of loop molecules for nondegenerate singularities of
rank $0$ is well known and is described in detail in the book
\cite{BolsinovFomenko99}. It remains to prove Theorem
\ref{T:Molecules} for the images of degenerate critical points of
rank $1$. In almost all cases the loop molecules (without marks) for
degenerate critical points can be uniquely determined using the
obtained information about the bifurcations and the number of tori
for all areas. Ambiguity for the loop molecules of points $y_8$ and
$y_9$ can be easy solved using the fact that the loop molecules of
these points must consist of two identical parts (see Assertion
\ref{A:Disconn_Deg_Point}). Marks for the degenerate singularities
can be found using standard methods (``rule of summation of the
marks'', considerations of continuity), which are described, for
example, in \cite{BolsinovFomenko99} or \cite{Morozov06}.
\end{proof}

\section{Classical Kovalevskaya case ($\varkappa=0$)}
\label{S:Classival_Kovalevskaya}

In this section we show that the bifurcation diagrams for classical
Kovalevskaya case defined on the Lie algebra $\textrm{e}(3)$ by the
Hamiltonian
\begin{equation} H = J_1^2 + J_2^2 + 2J_3^2 + 2 x_1\end{equation}
and the integral
\begin{equation} K = (J_1^2 - J_2^2-2 x_1)^2 + (2J_1J_2 - 2 x_2)^2
\end{equation} can be obtained from the bifurcation diagrams of the
integrable Hamiltonian system with Hamiltonian
\eqref{Eq:Hamiltonian} and the first integral
\eqref{Eq:First_Integral} (where $c_1=1$) on the Lie algebra
$\textrm{so}(4)$ by passing to the limit $\varkappa \to 0$. This
limit $\varkappa \to 0$ preserves types of critical points of rank
$0$, the bifurcations of Liouville tori and the loop molecules of
singular points of the momentum mappping.

The structure of bifurcation diagrams for the classical Kovalevskaya
case is well known and is described, for example, in
\cite{Harlamov88} and \cite{BolsinovFomenko99}. However, in this
section we not only compare the answers but also show how to
construct the bifurcation diagram for the classical Kovalevskaya
case and compute some of its invariants (more precisely, in this
section we determine the bifurcations of Liouville tori) using the
obtained information about the Kovalevskaya case on the Lie algebra
$so(4)$ while performing as little as possible additional
calculations.

In this section we denote by $\Sigma (a, b, 0)$ the bifurcation
diagram of the momentum mapping for the orbit $M_{a, b}$ of the Lie
algebra for which the corresponding value of the parameter of the
pencil is equal to $\varkappa$.

\begin{lemma} \label{L:Bif_Diag_Limit} Consider arbitrary $a, b \in
\mathbb{R}$ such that $a>0$.  Then a point $x$ belongs to the
bifurcation diagram $\Sigma (a, b, 0)$ if and only if there exists a
sequence of points $x_n \in \Sigma(a_n, b_n, \varkappa_n)$ such that
$\lim_{n \to \infty} (a_n, b_n, \varkappa_n) = (a, b, 0)$.
\end{lemma}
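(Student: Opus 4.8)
The plan is to reinterpret all of the diagrams $\Sigma(a,b,\varkappa)$ at once as the fibres of a single polynomial map, so that the lemma becomes a statement about the closedness of that map's image along one fibre. First I would introduce the \emph{universal critical set}
\[
C=\{(\mathbf{J},\mathbf{x},\varkappa)\in\mathbb{R}^7 : X_H \text{ and } X_K \text{ are linearly dependent at } (\mathbf{J},\mathbf{x})\}.
\]
As already observed in the proof of Theorems~\ref{T:Bif_Diag} and~\ref{T:Bifurcations}, the $2\times 2$ minors of the matrix $(X_H\,,\,X_K)$ are polynomials in $(\mathbf{J},\mathbf{x},\varkappa)$, so $C$ is a closed algebraic subset of $\mathbb{R}^7$ whose slice at each fixed $\varkappa$ is the critical set described in Assertions~\ref{A:Rank_1_Preimage} and~\ref{A:Rank_0_Preimage}. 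Consider the continuous (polynomial) map
\[
\Phi(\mathbf{J},\mathbf{x},\varkappa)=\bigl(H,\,K,\,f_1,\,f_2,\,\varkappa\bigr)\colon\mathbb{R}^7\to\mathbb{R}^5,
\]
for which, by construction, $\Sigma(a,b,\varkappa_0)=\{(h,k):(h,k,a,b,\varkappa_0)\in\Phi(C)\}$: each bifurcation diagram is exactly the slice of $\Phi(C)$ over the corresponding value of $(a,b,\varkappa)$. The forward implication of the lemma is then immediate, by taking the constant sequence $x_n=x$, $(a_n,b_n,\varkappa_n)=(a,b,0)$; the whole content lies in the reverse implication, which amounts to saying that $\Phi(C)$ is closed along the fibre over $(a,b,0)$.

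For the reverse implication I would argue as follows. Suppose $x_n\in\Sigma(a_n,b_n,\varkappa_n)$ with $x_n\to x$ and $(a_n,b_n,\varkappa_n)\to(a,b,0)$. For each $n$ pick a critical preimage $p_n=(\mathbf{J}^{(n)},\mathbf{x}^{(n)})\in\mathbb{R}^6$ with $(p_n,\varkappa_n)\in C$, $f_1(p_n,\varkappa_n)=a_n$, $f_2(p_n)=b_n$ and $\bigl(H(p_n),K(p_n,\varkappa_n)\bigr)=x_n$. If $\{p_n\}$ remains in a bounded region of $\mathbb{R}^6$, I may pass to a convergent subsequence $p_{n_j}\to p$; since $C$ is closed and $\Phi$ is continuous, the limit satisfies $(p,0)\in C$, $f_1(p,0)=a$, $f_2(p)=b$ and $\bigl(H(p),K(p,0)\bigr)=x$. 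Then $p$ is a critical point of the momentum map on the orbit $M_{a,b}$ of $\textrm{e}(3)$, which is non-singular since $a>0$, and hence $x\in\Sigma(a,b,0)$, as required.

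Thus everything reduces to the uniform boundedness of $\{p_n\}$, and I expect this to be the main obstacle. The difficulty is genuine: as $\varkappa\to 0$ the orbits cease to be compact (for $\varkappa=0$ they are $T^*\mathbb{S}^2$), and the Casimir relation $f_1=\mathbf{x}^2+\varkappa\mathbf{J}^2=a_n$ no longer controls $\mathbf{J}$, since on orbits with $\varkappa_n\to 0$ the norm $|\mathbf{J}|$ may grow like $\varkappa_n^{-1/2}$. The key point is that the convergence of the first coordinate of $x_n$ supplies the missing bound: from the Hamiltonian~\eqref{Eq:Hamiltonian},
\[
|\mathbf{J}^{(n)}|^2\le J_1^2+J_2^2+2J_3^2 = H(p_n)-2c_1 x_1^{(n)}\le h_n+2|c_1|\,|\mathbf{x}^{(n)}|,
\]
where $h_n=H(p_n)\to h$ is bounded. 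When $\varkappa_n\ge 0$ one has $|\mathbf{x}^{(n)}|\le\sqrt{a_n}$ directly, so $\{\mathbf{J}^{(n)}\}$ and $\{\mathbf{x}^{(n)}\}$ are bounded simultaneously; when $\varkappa_n<0$ I would substitute $|\mathbf{x}^{(n)}|^2=a_n+|\varkappa_n|\,|\mathbf{J}^{(n)}|^2$ into the inequality above and note that the quadratic growth of the left-hand side dominates the $\sqrt{|\varkappa_n|}\,|\mathbf{J}^{(n)}|$ growth on the right as $\varkappa_n\to 0$, so $|\mathbf{J}^{(n)}|$, and then $|\mathbf{x}^{(n)}|$, is again bounded for all large $n$. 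In either case $\{p_n\}$ lies in a fixed ball of $\mathbb{R}^6$, completing the argument. It is worth emphasizing that this estimate is precisely where the hypothesis $a>0$ and the role of the coordinate $H$, rather than the Casimir $f_1$, enter to rescue compactness in the limit.
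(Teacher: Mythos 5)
Your reverse-direction argument is essentially the paper's own proof. The paper likewise chooses a critical preimage $z_n$ of each $x_n$, traps the sequence in the compact set $A=\{(\mathbf{J},\mathbf{x},\varkappa)\,:\,(H,K,f_1,f_2,\varkappa)\in\overline{D_1}\times\overline{D_2}\times[0,T]\}$ (closed because the map is continuous; bounded because $f_1=\mathbf{x}^2+\varkappa\mathbf{J}^2$ controls $\mathbf{x}$ and then $J_1^2+J_2^2+2J_3^2=H-2c_1x_1$ controls $\mathbf{J}$), and extracts a convergent subsequence whose limit is a critical point over $(a,b,0)$ --- exactly your estimates and your use of the closedness of the critical set. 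Two differences deserve comment. First, the paper's compactness argument uses the segment $[0,T]$, i.e.\ it silently assumes $\varkappa_n\ge 0$; your separate treatment of $\varkappa_n<0$, where $f_1$ no longer dominates $\mathbf{x}^2$ and the term $2|c_1|\sqrt{|\varkappa_n|}\,|\mathbf{J}^{(n)}|$ must be absorbed into the quadratic $|\mathbf{J}^{(n)}|^2$, is correct and covers a case of the literal statement that the paper's proof does not. Second, and cutting the other way, you dispose of the forward implication by the constant sequence $(a_n,b_n,\varkappa_n)=(a,b,0)$. This does satisfy the statement as written, but it empties that direction of content: the paper instead shows that every critical point for $\varkappa=0$ is a limit of critical points with $\varkappa_n>0$, which follows from the explicit families of Assertion~\ref{A:Rank_1_Preimage} depending continuously (polynomially) on $\varkappa$, so that every point of $\Sigma(a,b,0)$ is a limit of points of genuinely $\textrm{so}(4)$ diagrams. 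It is this stronger version that is actually invoked to reconstruct the classical Kovalevskaya diagrams in Theorem~\ref{T:Bif_Diag_Kappa_Zero}; if your proof is to support that application, you should replace the constant-sequence observation by the paper's approximation argument, which your universal-critical-set framework accommodates immediately (the slices of $C$ vary continuously in $\varkappa$ along each explicit family). Finally, note that both you and the paper identify $\Sigma$ with the set of critical values of the momentum mapping (your last step concludes from ``$p$ is critical'' to ``$x\in\Sigma(a,b,0)$''); this identification is used throughout the paper, so it is not a defect of your argument, merely a convention you inherit.
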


\begin{proof}
In one direction, it follows from Assertion \ref{A:Rank_1_Preimage}
that for any critical point $z$ on the Lie algebra $e(3)$ (that is,
for a point with the parameter $\varkappa=0$) there exists a
sequence of critical points $z_n$  on the Lie algebra $so(4)$ (that
is, such that the value of the parameter $\varkappa>0$) converging
to it. Therefore, the image of the point $z$ is the limit of the
images of points $z_n$.

In the other direction, the proof is by contradiction. Suppose that
a point $x \in \mathbb{R}^2$ is regular, that is $ x \not \in \Sigma
(a, b, 0)$, but there exists a sequence $x_n \in \Sigma (a_n, b_n,
\varkappa_n)$ such that $x_n \to x$ and $ (a_n, b_n, \varkappa_n)
\to (a, b, 0)$ as $\varkappa \to 0$. In order to get a
contradiction, we choose a point $z_n$ in the preimage of each point
$x_n$ and prove that sequence $z_n$ contains a convergent
subsequence. To do this we show that the sequence $z_n$ is contained
in a compact set $A \subset \mathbb{R}^7(\mathbf{J}, \mathbf{x},
\varkappa)$.

Consider two sufficiently small closed discs $\overline{D_1}$ and
$\overline{D_2} \subset \mathbb{R}^2$ that contain points $x$ and
$(a, b)$ respectively and a small segment $[0, T] \subset
\mathbb{R}$. Then the set
\[ A = \{(J, x, \varkappa) | (H, K, f_1, f_2, \varkappa)(\mathbf{J},
\mathbf{x}, \varkappa) \subset \overline{D_1} \times \overline{D_2}
\times [0, T] \}
\] is compact. Indeed, $A \subset \mathbb{R}^7$ is a closed subset since $\overline{D_1} \times \overline{D_2} \times [0,
T]$ is a closed subset of $\mathbb{R}^5$ and the mapping $(H, K,
f_1, f_2, \varkappa): \mathbb{R}^7(\mathbf{J}, \mathbf{x},
\varkappa) \to \mathbb{R}^5$ is continuous. It remains to prove that
the set $A$ is bounded. Note that the set of numbers $(x_1, x_2,
x_3)$ is bounded since the integral $f_1 = \varkappa \textbf{J}^2 +
\textbf{x}^2$ is bounded above and below by some constants. It
remains to note that the set of numbers $(J_1, J_2, J_3)$ is also
bounded because \[J_1^2 + J_2^2 + 2J_3^2 = H - 2 c_1 x_1
\] and the right side is bounded since $(H, K) \in \overline{D_1}$
for all points from $A$. Lemma \ref{L:Bif_Diag_Limit} is proved.
\end{proof}

It is not hard to get the exact equations for the curves that
contain the bifurcation diagrams of the momentum mapping for
$\varkappa =0$. To do this it suffices to pass to the limit
$\varkappa \to 0$ in the equations for the curves
\eqref{Eq:Parametric Curve} and \eqref{Eq:Left Parabola} (the right
parabola \eqref{Eq:Right Parabola} ``shifts to the right to
infinity'' as $\varkappa \to 0$ thus it has no limit points for
$\varkappa =0$).

\begin{lemma} \label{L:Curve_Images_B_not_0_Kappa 0}
Let $\varkappa = 0$ and $b \ne 0$. Then for any non-singular orbit
$M_{a, b}$ (that is, for any orbit such that $a> 0$) the bifurcation
diagram $\Sigma_{h, k}$ for the integrable Hamiltonian system with
Hamiltonian \eqref{Eq:Hamiltonian}  and integral
\eqref{Eq:First_Integral} is contained in the union of the following
three families of curves on the plane $\mathbb{R}^2(h,k)$:
\begin{enumerate}
\item The line $k=0$; \item The parametric curve
\begin{equation} \label{Eq:Parametric Curve Kappa Zero} h(z)= \frac{b^2 c_1^2}{z^2}+2 z, \qquad k(z)= 4 a
c_1^2-\frac{4 b^2 c_1^2}{z}+\frac{b^4 c_1^4}{z^4},
\end{equation} where $z \in \mathbb{R} - \{0\}$.
\item The parabola \begin{equation} \label{Eq:Parabola Kappa Zero} k =
\left(h- \frac{2 b^2}{a}\right)^2.
\end{equation}

\end{enumerate}
\end{lemma}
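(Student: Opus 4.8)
The plan is to run the same argument as for Lemma~\ref{L:Curve_Images_B_not_0_Kappa_not_0}, simply specialized to $\varkappa=0$. The key observation is that Assertion~\ref{A:Rank_1_Preimage} is proved without any restriction on $\varkappa$, so at $\varkappa=0$ the set of points where $X_H$ and $X_K$ are linearly dependent is still exactly the six families listed there; since $\Sigma_{h,k}$ is contained in the $\mathcal{F}$-image of this set, it suffices to compute the images of these six families when $\varkappa=0$ and $b\ne 0$, in direct analogy with Assertion~\ref{A:Rank1_Preimages_Images}. Concretely, I would show that family~\ref{Rank1SeriesA} lands on the line $k=0$, family~\ref{Rank1SeriesB} on the parametric curve \eqref{Eq:Parametric Curve Kappa Zero}, and families~\ref{Rank1SeriesC}, \ref{Rank1SeriesC1}, \ref{Rank1SeriesC2}, \ref{Rank1SeriesC3} on the single parabola \eqref{Eq:Parabola Kappa Zero}.

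The first two cases are immediate. For family~\ref{Rank1SeriesA} one checks $k=0$ by direct substitution, exactly as before. For family~\ref{Rank1SeriesB} I would reuse the parametrization of Assertion~\ref{A:Rank1_Preimages_Images}: set $z=J_3^2+c_1x_1$ (nonzero because $b\ne 0$), express $J_1=bc_1/z$ from $f_2=b$ and $x_2$ from $f_1=a$, and substitute into \eqref{Eq:Hamiltonian} and \eqref{Eq:First_Integral}. At $\varkappa=0$ the $J_3$-dependence cancels and the two functions become precisely the $h(z),k(z)$ of \eqref{Eq:Parametric Curve Kappa Zero}; this is the verbatim computation of the $\varkappa\ne0$ case with the terms $-2\varkappa c_1^2 h+\varkappa^2 c_1^4$ deleted.

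The main point is the collapse of the four remaining families onto one parabola. As in Assertion~\ref{A:Rank1_Preimages_Images} I would first note that on these families $k=(-\lambda/2)^2$, where $\lambda$ is the proportionality coefficient $X_K+\lambda X_H=0$ already recorded in Assertion~\ref{A:Rank1_Coeff_Spectr}, and then verify the linear identity $-\lambda/2=h-\tfrac{2b^2}{a}$. For $\varkappa\ne0$ this identity carried the $\pm\sqrt{a^2-4\varkappa b^2}/\varkappa$ splitting that produced \emph{two} parabolas; at $\varkappa=0$ only the finite branch survives, so all four families map to the single parabola \eqref{Eq:Parabola Kappa Zero}. The quickest route to the identity is to use $a=\mathbf{x}^2$ and $b=\langle\mathbf{x},\mathbf{J}\rangle$ together with the defining constraint of each family: for instance on family~\ref{Rank1SeriesC1} the vectors $(x_1,x_3)$ and $(J_1,J_3)$ are proportional, which gives $b^2/a=J_1^2+J_3^2$ and reduces the identity to $h=J_1^2+2J_3^2+2c_1x_1$. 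I expect the only laborious verification to be the generic four-parameter family~\ref{Rank1SeriesC}, where $\lambda$ and the constraint are more involved.

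Finally, I would close by remarking that the three curves are exactly the $\varkappa\to0$ limits of the curves of Lemma~\ref{L:Curve_Images_B_not_0_Kappa_not_0}: \eqref{Eq:Parametric Curve} tends to \eqref{Eq:Parametric Curve Kappa Zero} termwise in $z$, the left parabola \eqref{Eq:Left Parabola} tends to \eqref{Eq:Parabola Kappa Zero} because $-a/\varkappa+\sqrt{a^2-4\varkappa b^2}/\varkappa\to-2b^2/a$ (using $a>0$), and the right parabola \eqref{Eq:Right Parabola} has vertex abscissa $\varkappa c_1^2+a/\varkappa+\sqrt{a^2-4\varkappa b^2}/\varkappa\to+\infty$, so it retains no finite limit point. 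Together with Lemma~\ref{L:Bif_Diag_Limit} this shows the direct computation agrees with the limiting picture and explains why the second parabola disappears in the passage to $\textrm{e}(3)$.
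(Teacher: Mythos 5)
Your proof is correct, but it follows a genuinely different route from the paper's. In the paper this lemma is obtained with essentially no new computation: it is a corollary of Lemma~\ref{L:Bif_Diag_Limit}, whose relevant direction (every point of $\Sigma(a,b,0)$ is a limit of points of $\Sigma(a_n,b_n,\varkappa_n)$ as $\varkappa_n \to 0^{+}$) rests on the fact that Assertion~\ref{A:Rank_1_Preimage} holds for all $\varkappa$; one then passes to the limit $\varkappa \to 0$ in the curves of Lemma~\ref{L:Curve_Images_B_not_0_Kappa_not_0} --- which is precisely the computation you relegate to your closing remark: the parametric curve converges termwise, the left parabola tends to \eqref{Eq:Parabola Kappa Zero}, and the right parabola escapes to infinity. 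You instead re-run the direct argument of Assertion~\ref{A:Rank1_Preimages_Images} at $\varkappa=0$, which is equally valid (again because Assertion~\ref{A:Rank_1_Preimage} imposes no restriction on $\varkappa$), and your key identities do check out: on family~\ref{Rank1SeriesC1} the proportionality of $(J_1,J_3)$ and $(x_1,x_3)$ gives $b^2/a = J_1^2+J_3^2$ as you claim; on family~\ref{Rank1SeriesC2} the cubic constraint is exactly what makes $-\lambda/2 = h - 2b^2/a$ hold; on family~\ref{Rank1SeriesC} the $\varkappa=0$ form of the paper's auxiliary relation, $a/b = x_3/J_3$, combined with the defining equations of the family, reduces the identity to a tautology; and family~\ref{Rank1SeriesC3} is vacuous for $b \ne 0$, since its constraints force $f_2=0$. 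You are also right, implicitly, on a point that deserves emphasis: the $\varkappa \ne 0$ identity of Assertion~\ref{A:Rank1_Preimages_Images} cannot be literally specialized at $\varkappa = 0$ because of the division by $\varkappa$, so it must be re-verified, which is what your plan does. The trade-off between the two approaches: yours is self-contained and avoids the point-set care needed when taking limits of points lying on parameter-dependent curves, while the paper's limit argument costs almost nothing computationally and matches the stated purpose of Section~\ref{S:Classival_Kovalevskaya} --- extracting the $\textrm{e}(3)$ picture from the already-established $\textrm{so}(4)$ results, with the compactness argument of Lemma~\ref{L:Bif_Diag_Limit} doing the real work.
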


We have an analogous statement for $b =0$.

\begin{lemma} \label{L:Curve_Images_Kappa 0 B_0}
Let $\varkappa = 0$ and $b=0$. Then for any non-singular orbit
$M_{a, 0}$ (that is, for any orbit such that $a > 0$) the
bifurcation diagram $\Sigma_{h, k}$  for the integrable Hamiltonian
system with Hamiltonian \eqref{Eq:Hamiltonian} and integral
\eqref{Eq:First_Integral} is contained in the union of the following
three families of curves on the plane $\mathbb{R}^2(h,k)$:
\begin{enumerate}
\item The line $k=0$; \item The union of the parabola
\begin{equation}\label{Eq:Up Parabola Kappa Zero B Zero} k=h^2 +
4ac_1^2\end{equation} and the tangent line to this parabola at the
point $h=0$
\begin{equation}\label{Eq:Tangent Line Kappa Zero B Zero} k=
4ac_1^2.\end{equation} \item The parabola
\begin{equation} \label{Eq:Parabola Kappa Zero B Zero}
k=h^2.\end{equation} \end{enumerate}
\end{lemma}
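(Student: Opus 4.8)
The plan is to argue exactly as for the companion Lemma~\ref{L:Curve_Images_B_not_0_Kappa 0}: realise $\Sigma(a,0,0)$ as a set of limit points of bifurcation diagrams with $\varkappa\ne0$ and then pass to the limit $\varkappa\to0$ in the curves already found in Lemma~\ref{L:Curve_Images_B_Zero_Kappa_not_0} (and, where the approximating orbits happen to have $b\ne0$, in those of Lemma~\ref{L:Curve_Images_B_not_0_Kappa_not_0}). Concretely, by Lemma~\ref{L:Bif_Diag_Limit} applied with $b=0$, any $x\in\Sigma(a,0,0)$ is a limit of points $x_n\in\Sigma(a_n,b_n,\varkappa_n)$ with $(a_n,b_n,\varkappa_n)\to(a,0,0)$, and by the construction in the proof of that lemma the approximating orbits may be taken with $\varkappa_n>0$. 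Passing to a subsequence, I would split into the case $b_n=0$ for all $n$ and the case $b_n\ne0$ for all $n$, and in each case assume that the whole sequence $x_n$ lies on a single one of the curves of the applicable lemma.

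The heart of the proof is then a handful of elementary limit computations. The line $k=0$ is fixed. The upper parabola \eqref{Eq:Up Parabola B Zero}, $k=(h-\varkappa c_1^2)^2+4ac_1^2$, tends to $k=h^2+4ac_1^2$, which is \eqref{Eq:Up Parabola Kappa Zero B Zero}, and the tangent line \eqref{Eq:Tangent Line B Zero}, $k=-2\varkappa c_1^2 h+(4ac_1^2+\varkappa^2c_1^4)$, tends to the horizontal line $k=4ac_1^2$ of \eqref{Eq:Tangent Line Kappa Zero B Zero}, which is indeed tangent to the limiting parabola at $h=0$; the left parabola \eqref{Eq:Left Parabola B Zero}, $k=(h-\varkappa c_1^2)^2$, tends to $k=h^2$, i.e.\ \eqref{Eq:Parabola Kappa Zero B Zero}. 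When instead $b_n\ne0$ one checks the same way that $k=0$ is fixed, that the left parabola \eqref{Eq:Left Parabola} again tends to $k=h^2$ (using $\bigl(\sqrt{a^2-4\varkappa b^2}-a\bigr)/\varkappa=-4b^2/\bigl(\sqrt{a^2-4\varkappa b^2}+a\bigr)\to0$ as $b\to0$), and that every finite limit point of the parametric curve \eqref{Eq:Parametric Curve} lies either on the horizontal line $k=4ac_1^2$ or, when $z_n\to0$ commensurately with $b_n\to0$, on the upper parabola $k=h^2+4ac_1^2$. In every case the limit lands in the union of curves listed in the statement.

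The main obstacle, and the only step that is not a routine substitution, is to show that the curves which run off to infinity contribute no finite limit point. Since $a>0$, the vertex $h=\varkappa c_1^2+2a/\varkappa$ of the right parabola \eqref{Eq:Right Parabola B Zero} tends to $+\infty$ as $\varkappa\to0^+$, so on every fixed compact subset of the $(h,k)$-plane this parabola is eventually empty; the same must be checked for the genuinely unbounded branches of the parametric curve (those along which $h_n\to\infty$). The clean way to package this is to observe that a convergent sequence $x_n$ lying on such a curve would force the relevant unbounded quantity (the abscissa of the vertex, respectively $h_n$) to remain bounded, which is impossible; hence these curves drop out in the limit, exactly as the phrase ``shifts to the right to infinity'' indicated in the case $b\ne0$.

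Finally, as an independent check one may avoid limits entirely: since Assertions~\ref{A:Rank_1_Preimage} and~\ref{A:Rank_0_Preimage} hold for every $\varkappa$, one sets $\varkappa=0$ and imposes $b=\langle\mathbf{x},\mathbf{J}\rangle=0$ and repeats the direct computation of Assertion~\ref{A:Rank1_Preimages_Images_B0}. Family~\ref{Rank1SeriesA} still satisfies $k=0$; Family~\ref{Rank1SeriesB} falls on the parabola \eqref{Eq:Up Parabola Kappa Zero B Zero} in the subcase $J_3^2+c_1x_1=0$ and on the tangent line \eqref{Eq:Tangent Line Kappa Zero B Zero} otherwise; and the surviving points of Families~\ref{Rank1SeriesC}--\ref{Rank1SeriesC3} land on $k=h^2$, the hypothesis $a>0$ being used to discard the sub-families whose images have escaped to infinity on the non-compact orbit $M_{a,0}$.
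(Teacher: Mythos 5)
Your proposal is correct and takes essentially the same route as the paper: the paper obtains this lemma precisely by combining Lemma~\ref{L:Bif_Diag_Limit} with the passage to the limit $\varkappa \to 0$ in the curves of Lemmas~\ref{L:Curve_Images_B_not_0_Kappa_not_0} and~\ref{L:Curve_Images_B_Zero_Kappa_not_0}, discarding the right parabola because it ``shifts to the right to infinity.'' Your write-up simply makes explicit the details the paper leaves implicit (the subsequence bookkeeping, the two limit regimes of the parametric curve, the escape-to-infinity argument), and your closing direct computation at $\varkappa=0$, $b=0$ is a sound, optional cross-check mirroring Assertion~\ref{A:Rank1_Preimages_Images_B0}.
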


Now we determine which areas described in Theorems \ref{T:Bif_Diag}
and \ref{T:Bif_Diag_B_Zero} survive as $\varkappa \to 0$. It is not
hard to check that in the limit the curves $f_k, f_r, f_t$ and $f_l$
given by the formulas \eqref{Eq: Function_F_K},
\eqref{Eq:Function_F_r}, \eqref{Eq:Triple_Intersect} and
\eqref{Eq:Function_F_l} respectively go to the curves $a=
\frac{3}{4}\frac{b^{4/3}}{c_1^{2/3}}$, $a=
\frac{b^{4/3}}{c_1^{2/3}}$, $a = \frac{1}{2^{2/3}}
\frac{b^{4/3}}{c_1^{2/3}}$ and $b =0$ respectively. (For a fixed
$b\ne 0$ the curve $f_m(b)$  has no limit points in the area $\{a>0,
b> 0\}$ as $\varkappa \to 0$.) The found curves divide the area
$\{a> 0, b>0\}$ into $4$ sub-areas which we denote in this paper as
follows:

\begin{enumerate}

\item Area $\textrm{I}'$ is the area $\{\varkappa=0, \quad 0<b, \quad 0 <a <\frac{1}{2^{2/3}}
\frac{b^{4/3}}{c_1^{2/3}} \}$;

\item Area $\textrm{II}'$: $\{\varkappa=0, \quad 0<b,  \quad \frac{1}{2^{2/3}}
\frac{b^{4/3}}{c_1^{2/3}}<a<\frac{3}{4}\frac{b^{4/3}}{c_1^{2/3}}\}$;

\item Area $\textrm{III}'$ : $\{\varkappa=0, \quad 0<b, \quad \frac{3}{4}\frac{b^{4/3}}{c_1^{2/3}}<a<\frac{b^{4/3}}{c_1^{2/3}}\}$;

\item Area $\textrm{IV}'$: $\{\varkappa=0, \quad 0<b, \quad  \frac{b^{4/3}}{c_1^{2/3}}<a\}$.

\end{enumerate}

If $\varkappa=0$, then all the curves $f_r, f_k, f_t$ and $f_l$
intersect only at the origin, therefore in this case we should
consider only one additional area of the line $b=0$:

\begin{enumerate}

\item Area $\textrm{V}'$: $\{\varkappa=0, \quad b=0,  \quad 0<a \}$.

\end{enumerate}

\begin{remark}
If $\varkappa=0$, then without loss of generality, we can assume
that  $a=1$ (other orbits can be obtained from the case $a=1$ by a
suitable change of variables). It is not hard to check that the line
$a=1$ intersects the areas $\textrm{I}'$--$\textrm{V}'$ at the
following subsets: it intersects the first four areas at the
intervals $2<b^2$, $(4/3)^{3/2}<b^2<2$, $1<b^2<(4/3)^{3/2}$ and
$0<b^2<1$ (intervals are listed in ascending order of areas) and it
intersects the area $\textrm{V}'$ at the point $b=0$.
\end{remark}

Now it is not hard to understand how the bifurcation diagrams for
the classical Kovalevskaya case look like: roughly speaking, they
are obtained from the diagrams shown in Fig.
\ref{Fig:Koval_U_1_A}--\ref{Fig:Koval_U_4_C} (or in Fig.
\ref{Fig:Koval_Z_1} for $b=0$) by removing all the ``right arcs'',
that is all the arcs belonging to the right parabola \eqref{Eq:Right
Parabola} and the arc $z_1 z_2$. Namely, the following theorem
holds.

\begin{theorem}[\cite{Harlamov88}] \label{T:Bif_Diag_Kappa_Zero}
Let $\varkappa = 0$ and $b > 0$. The curves \[a=
\frac{b^{4/3}}{c_1^{2/3}}, \quad a=
\frac{3}{4}\frac{b^{4/3}}{c_1^{2/3}} \quad \text{and} \quad a =
\frac{1}{2^{2/3}} \frac{b^{4/3}}{c_1^{2/3}}\] divide the area
$\{a>0, b> 0\}$ into $4$ areas. In Fig. \ref{Fig:Class_Koval_4}--
\ref{Fig:Class_Koval_1} the bifurcation diagrams of the momentum
mapping for the integrable Hamiltonian system with Hamiltonian
\eqref{Eq:Hamiltonian} and integral \eqref{Eq:First_Integral} on the
orbit $M_{a, b}$ of the Lie algebra $\textrm{e}(3)$ are shown for
each of the these areas. The enlarged fragments of Fig.
\ref{Fig:Class_Koval_4}, \ref{Fig:Class_Koval_3},
\ref{Fig:Class_Koval_2} and \ref{Fig:Class_Koval_1} have the same
forms as Fig. \ref{Fig:Koval_U_4_C}, \ref{Fig:Koval_U_3_C},
\ref{Fig:Koval_U_2_C} and \ref{Fig:Koval_U_1_C} respectively. The
bifurcations diagram for the orbit $M_{a, 0}$ of the Lie algebra
$\textrm{e}(3)$ (that is, in the case $\varkappa =0, b=0, a>0$) is
shown in Fig. \ref{Fig:Class_Koval_0} .
\end{theorem}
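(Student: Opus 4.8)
The plan is to deduce the whole statement from the already-established picture for $\varkappa>0$ by passing to the limit $\varkappa\to 0$, using Lemma~\ref{L:Bif_Diag_Limit} as the main bridge. That lemma identifies $\Sigma(a,b,0)$ with the set of limit points of the diagrams $\Sigma(a_n,b_n,\varkappa_n)$ for $(a_n,b_n,\varkappa_n)\to(a,b,0)$. Combined with the containment Lemma~\ref{L:Curve_Images_B_not_0_Kappa 0} (and Lemma~\ref{L:Curve_Images_Kappa 0 B_0} for $b=0$), which already places $\Sigma(a,b,0)$ on the line $k=0$, the parametric curve and the single parabola, the task reduces to deciding which arcs of the $\varkappa>0$ diagrams have finite limit points and which bifurcations sit on them.

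First I would set up the correspondence of areas. As recorded in the paragraph preceding the theorem, when $\varkappa\to0$ the curves $f_t$, $f_k$ and $f_r$ from \eqref{Eq:Triple_Intersect}, \eqref{Eq: Function_F_K} and \eqref{Eq:Function_F_r} tend to $a=\tfrac{1}{2^{2/3}}b^{4/3}c_1^{-2/3}$, $a=\tfrac34 b^{4/3}c_1^{-2/3}$ and $a=b^{4/3}c_1^{-2/3}$ respectively, while $f_l\to\{b=0\}$ and $f_m$ escapes to infinity. Using Assertion~\ref{A:Functions_Interposition} one checks that the relative order of the three limiting curves is the same for every $b>0$, so they genuinely cut $\{a>0,\,b>0\}$ into the four regions $\textrm{I}'$--$\textrm{IV}'$. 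Moreover, for fixed $b>0$ the distinction $b^2\lessgtr\varkappa^3c_1^4$ collapses (eventually $b^2>\varkappa^3c_1^4$) and $f_l\to0$, so each $\textrm{J}'$ is the $\varkappa\to0$ trace of the $so(4)$ area $\textrm{J}\in\{\textrm{I},\dots,\textrm{IV}\}$, while area $\textrm{V}$ (and the regions $\textrm{VI}$--$\textrm{IX}$ living in $b^2<\varkappa^3c_1^4$) disappear. This pins down which $\varkappa>0$ diagram to degenerate in each region.

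Next, for a fixed area $\textrm{J}'$ I would choose a sequence $(a_n,b_n,\varkappa_n)\to(a,b,0)$ lying in the corresponding $so(4)$ area $\textrm{J}$, so that all diagrams $\Sigma(a_n,b_n,\varkappa_n)$ are combinatorially the one described by Theorem~\ref{T:Bif_Diag} for $\textrm{J}$. The key quantitative point is that the right parabola \eqref{Eq:Right Parabola} recedes to infinity: its leftmost point has abscissa $h=\varkappa c_1^2+\tfrac{a}{\varkappa}+\tfrac{\sqrt{a^2-4\varkappa b^2}}{\varkappa}$, which behaves like $\tfrac{2a}{\varkappa}\to+\infty$ for $a>0$. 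Hence the right arcs and the joining arc $z_1z_2$ have no finite limit points, whereas the line $k=0$, the parametric curve \eqref{Eq:Parametric Curve} and the left parabola \eqref{Eq:Left Parabola} converge arc-by-arc to the curves of Lemma~\ref{L:Curve_Images_B_not_0_Kappa 0} (the left parabola limiting to $k=(h-2b^2/a)^2$). By Lemma~\ref{L:Bif_Diag_Limit} this shows that $\Sigma(a,b,0)$ is obtained from the $\textrm{J}$-diagram by deleting exactly the right arcs, which is the claimed figure; the bifurcations on the surviving arcs are the ones already recorded for $\varkappa=0$ in Theorems~\ref{T:Bifurcations} and \ref{T:Molecules}, since the critical families of Assertion~\ref{A:Rank_1_Preimage} are given by $\varkappa$-independent formulas whose $\varkappa\to0$ limits are precisely the $e(3)$ critical points.

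The main obstacle is the rigorous ``only if'' half of the limit argument: one must rule out spurious finite limit points coming from the receding right arcs and confirm that no surviving arc is accidentally deleted. This is handled by the compactness argument already used to prove Lemma~\ref{L:Bif_Diag_Limit} (a convergent subsequence of preimages exists because $f_1$ and the energy bound the coordinates), together with the explicit divergence of the right-parabola abscissa above. The remaining cases are routine: the boundary ray $b=0$ is treated identically using Lemma~\ref{L:Curve_Images_Kappa 0 B_0}, yielding the diagram of Fig.~\ref{Fig:Class_Koval_0}, and the coincidence of the enlarged fragments with Fig.~\ref{Fig:Koval_U_4_C}--\ref{Fig:Koval_U_1_C} follows because deleting the right arcs leaves the local pictures near the surviving singular points unchanged.
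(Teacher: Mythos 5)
Your treatment of the set-theoretic part of the statement --- the correspondence of the areas $\textrm{I}'$--$\textrm{IV}'$ with the $so(4)$ areas $\textrm{I}$--$\textrm{IV}$, the use of Lemma~\ref{L:Bif_Diag_Limit}, and the observation that the right parabola \eqref{Eq:Right Parabola} together with the arc $z_1z_2$ recedes to infinity while the line $k=0$, the curve \eqref{Eq:Parametric Curve} and the left parabola \eqref{Eq:Left Parabola} converge to the curves of Lemmas~\ref{L:Curve_Images_B_not_0_Kappa 0} and \ref{L:Curve_Images_Kappa 0 B_0} --- is sound, and it is essentially the route the paper takes (this material appears in the discussion preceding the theorem). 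The genuine gap is in your final step, where you assert that the bifurcations on the surviving arcs are ``the ones already recorded for $\varkappa=0$ in Theorems~\ref{T:Bifurcations} and \ref{T:Molecules}.'' That is circular within the paper's logic: the proof of those theorems in Section~\ref{SubS:Proof_Main_Theorems} explicitly excludes the case $\varkappa=0$, and it is precisely the proof of the present theorem that is meant to establish the $\varkappa=0$ content of Fig.~\ref{Fig:Class_Koval_0}--\ref{Fig:Class_Koval_1}. So you cannot invoke them here.

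Your substitute justification --- that the critical families of Assertion~\ref{A:Rank_1_Preimage} converge to the $e(3)$ critical points as $\varkappa\to0$ --- does not suffice to transfer the bifurcation types, nor even the local structure of the diagram near its singular points, which your claim about the enlarged fragments requires. (Incidentally, those formulas are not $\varkappa$-independent; they involve $\varkappa$ explicitly, though they do vary continuously in it, which is all the limit lemma needs.) Convergence of critical sets is perfectly compatible with degeneration in the limit, and the paper stresses exactly this point: its proof begins by re-verifying that all rank-$0$ and rank-$1$ critical points remain nondegenerate at $\varkappa=0$, with unchanged rank-$0$ types --- this is not automatic, since for the $so(4)$ system degenerations do occur when the parameters $(a,b)$ cross the curves separating the areas. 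The paper then transfers the number of tori over regular regions and the number of critical circles over regular arcs by continuity (the circle count requires care: nondegeneracy prevents circles from merging or disappearing, and the compactness argument from Lemma~\ref{L:Bif_Diag_Limit} prevents new ones from appearing), and finally pins down the atoms using the stability of the bifurcations $A$, $B$, $A^*$ together with ad hoc continuity arguments --- for instance, at $\varkappa=0$, $b=0$ the arc $P_1$ carries $C_2$ rather than $2A^*$, decided by comparison with the arc $y_3z_5$, not by any limit of critical points. These steps (nondegeneracy at $\varkappa=0$, the counts of tori and critical circles, and the stability arguments resolving $C_2$ versus $2A^*$) are absent from your proposal, and they constitute the actual content of the paper's proof.
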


\begin{figure}[h!]
    \centering
        \includegraphics[width=0.48\textwidth]{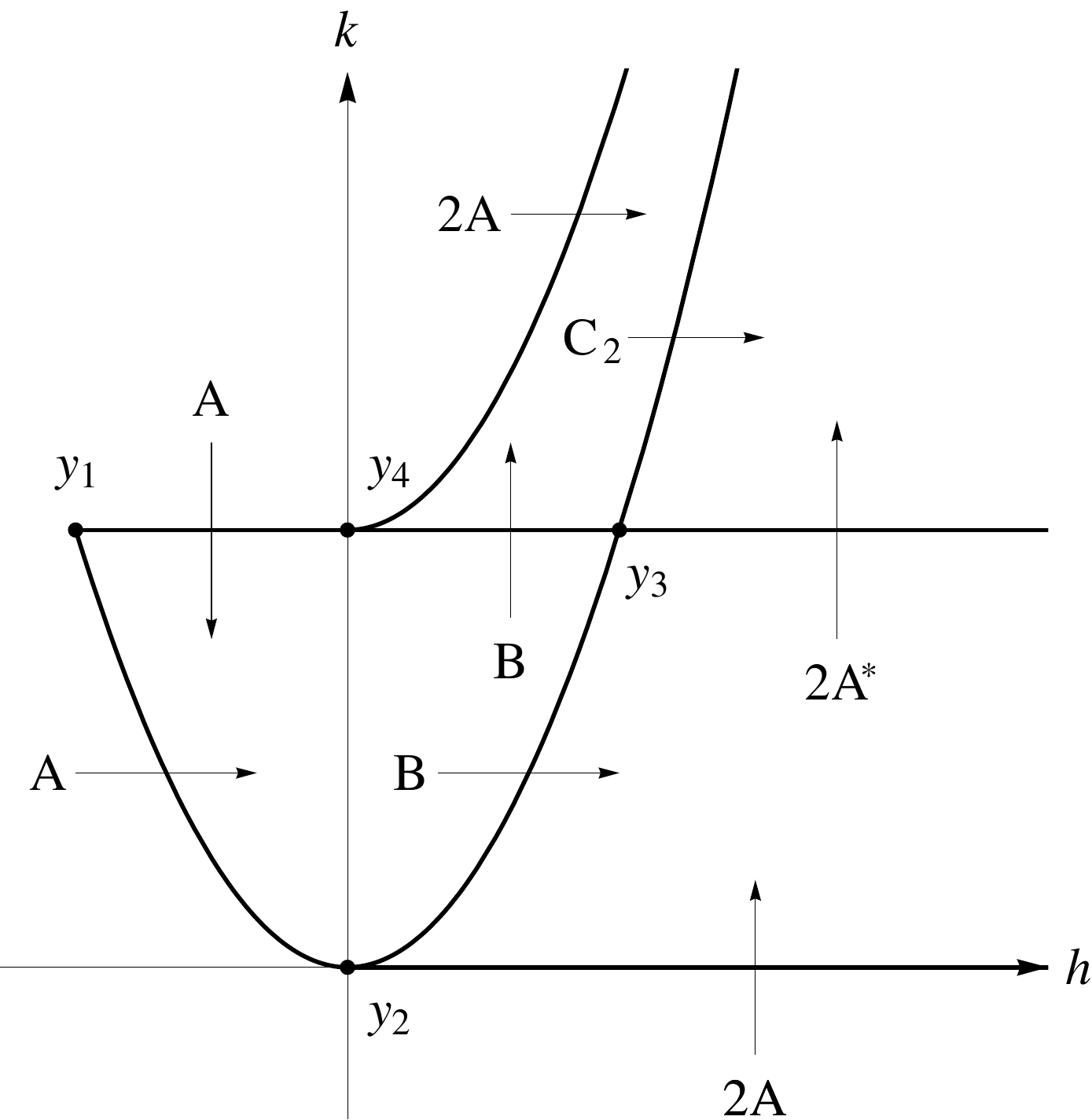}
   \caption{Classical Kovalevskaya case: $\varkappa =0, \quad b=0$}
      \label{Fig:Class_Koval_0}
\end{figure}

\begin{figure}[!htb]
\minipage{0.48\textwidth}
    \includegraphics[width=\linewidth]{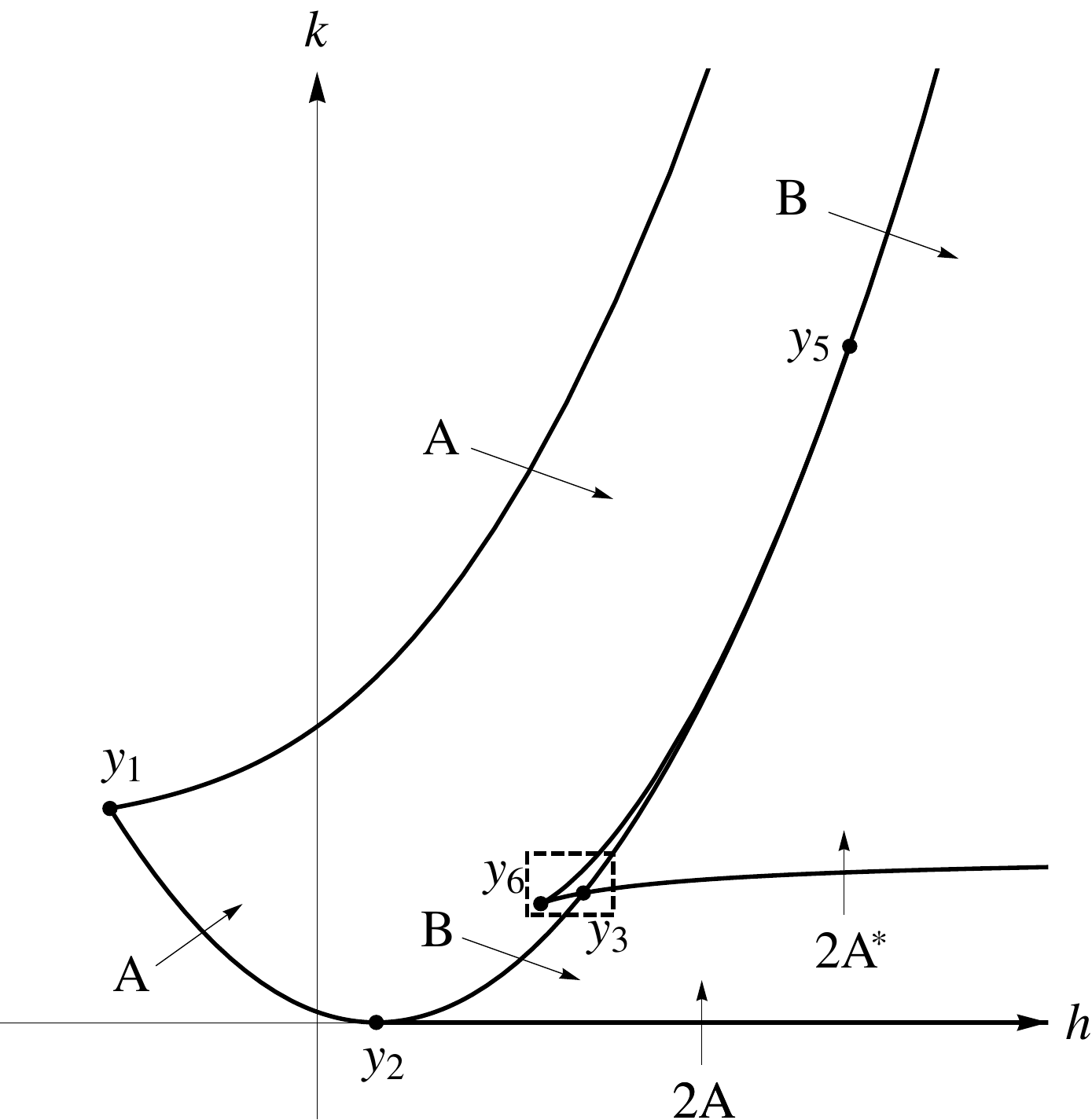}
   \caption{Classical Kovalevskaya case: $\varkappa =0, \quad a=1, \quad 0<b^2<1$}
   \label{Fig:Class_Koval_4}
\endminipage
\hspace{0.04\textwidth}
\minipage{0.48\textwidth}
    \includegraphics[width=\linewidth]{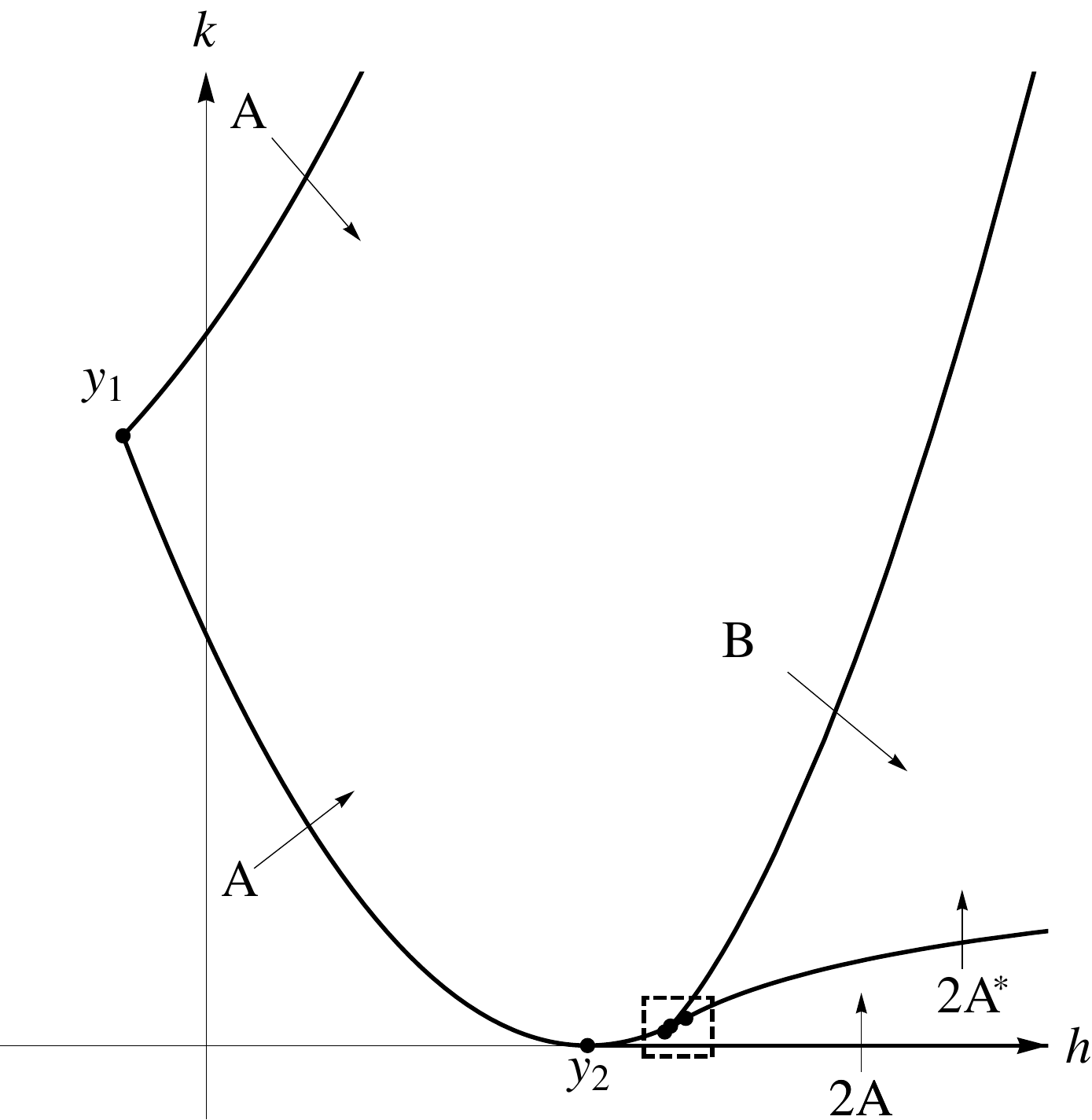}
   \caption{Classical Kovalevskaya case: $\varkappa =0, \quad a=1,  \quad 1<b^2<(4/3)^{3/2}$}
   \label{Fig:Class_Koval_3}
\endminipage
\end{figure}

\begin{figure}[!htb]
\minipage{0.48\textwidth}
    \includegraphics[width=\linewidth]{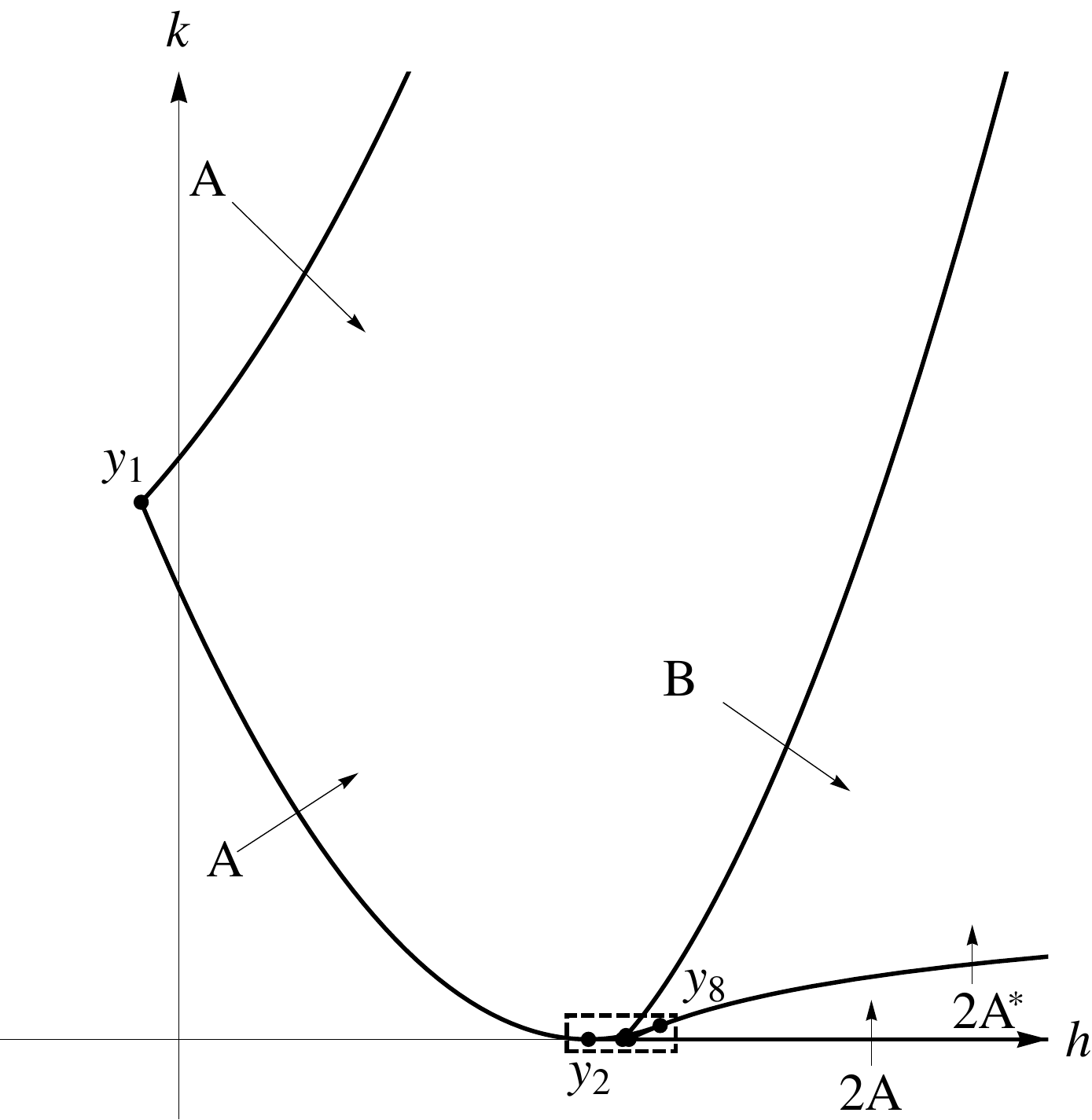}
   \caption{Classical Kovalevskaya case: $\varkappa =0, \quad a=1, \quad (4/3)^{3/2}<b^2<2$}
   \label{Fig:Class_Koval_2}
\endminipage
\hspace{0.04\textwidth}
\minipage{0.48\textwidth}
  \includegraphics[width=\linewidth]{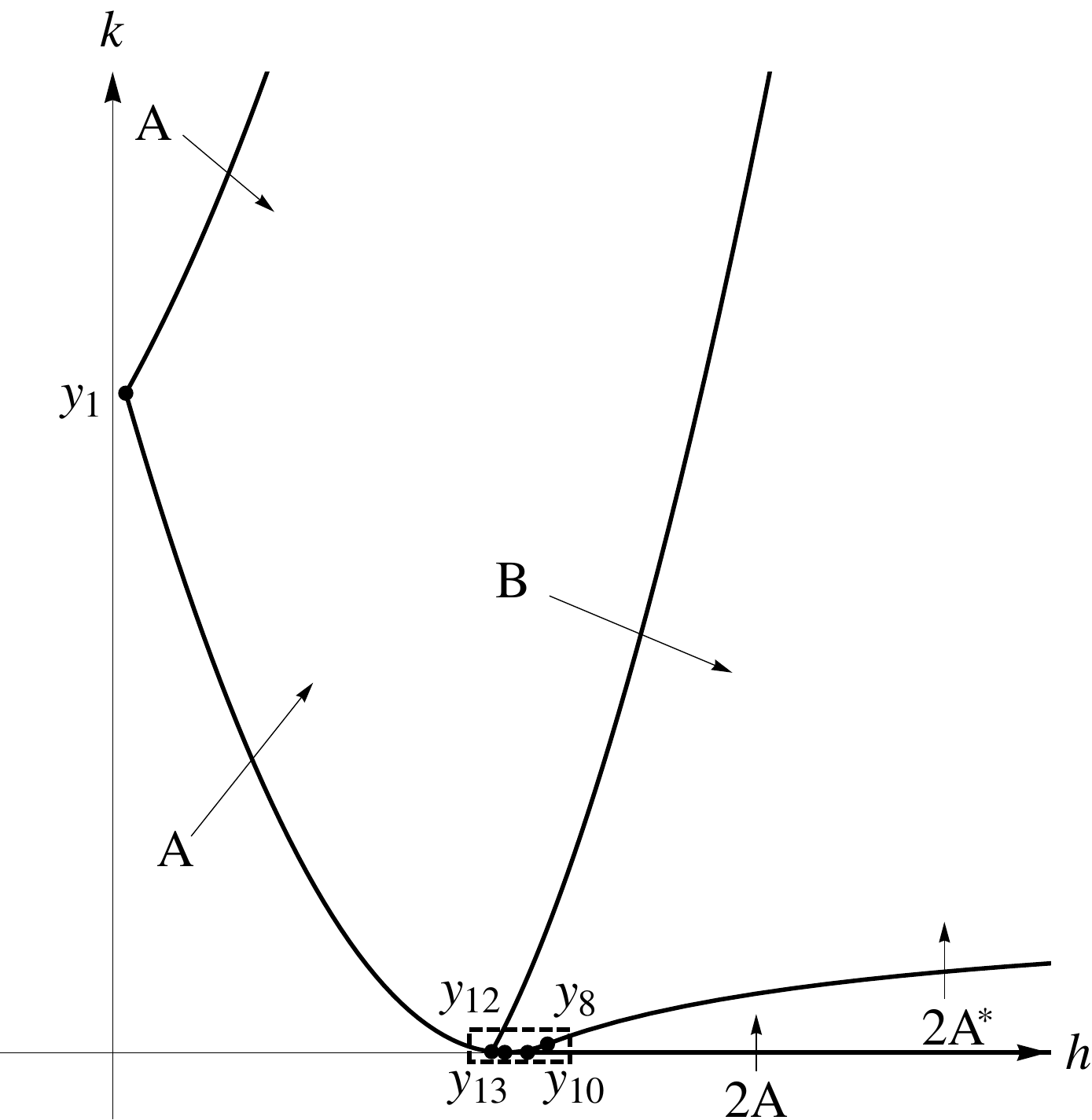}
   \caption{Classical Kovalevskaya case: $\varkappa =0, \quad a=1, \quad 2<b^2$}
   \label{Fig:Class_Koval_1}\endminipage
\end{figure}

\begin{remark}
In Fig. \ref{Fig:Class_Koval_4}--\ref{Fig:Class_Koval_1} the arcs
$y_1 y_2, y_2 y_3, y_3 y_5, y_2 y_7, y_7 y_8, y_1 y_{12}, y_{12}
y_{13}$ and $y_{13} y_8$ belong to the parabola \eqref{Eq:Parabola
Kappa Zero}. The rest of the arcs distribute between the curve
\eqref{Eq:Parametric Curve Kappa Zero} and the line $k=0$ in an
obvious way. \end{remark}

\begin{proof}[of Theorem \ref{T:Bif_Diag_Kappa_Zero}]

First of all, it is necessary to check the nondegeneracy of the
critical points in the case  $\varkappa=0$ since in general critical
points may become degenerate during a variation of parameters (for
example, for the integrable system on the Lie algebra $so(4)$ under
consideration this happens when passing from one area of parameters
$a, b$ to another). Nevertheless, all calculations done in Sections
\ref{SubS:CritRank1} and \ref{SubS:Critical_Points_Zero_Rang} for
the points of rank $1$ and $0$ respectively remain valid in the case
$\varkappa =0$, therefore it is not hard to verify that all critical
points corresponding to non-singular points of the bifurcation
diagram are nondegenerate critical points of rank $1$ and that all
critical points of rank $0$ are nondegenerate and have the same type
as the corresponding critical points of rank $0$ for the Lie algebra
$so(4)$.

Furthermore, it follows from the reasons of continuity that the
preimage of each regular point in the image of the momentum mapping
for the Lie algebra $e(3)$ must contain the same number tori as the
preimage of a regular point from the corresponding region for the
Lie algebra $so(4)$. Similarly, the number of critical circles in
the preimage of non-singular points of bifurcation diagrams must
coincide. (The number of critical circles does not decrease because
all singularities are nondegenerate. Under a small perturbation of
parameters a complex singularities can decompose into several simple
ones but it does not occur in this case --- it follows from the
explicit form of the bifurcation diagrams. The number of critical
circles does not increase for the same arguments as in the proof of
the fact in Lemma \ref{L:Bif_Diag_Limit} that there is no points of
bifurcation diagrams in a neighbourhood of a regular point).

Now, since we know the types of critical points of rank $0$, the
numbers of tori and critical circles we can determine almost all
bifurcations of Liouville tori. It remains to use the stability of
bifurcations $A, B$ and $A^*$ and standard considerations of
continuity to get rid of the ambiguity for some arcs. For example,
in the case  $\varkappa =0, b=0$ the bifurcation corresponding to
the arc $P_1$ has type $C_2$ and not $2A^*$ since the bifurcation
corresponding to the arc $y_3 z_5$ has type $C_2$.

Theorem \ref{T:Bif_Diag_Kappa_Zero} is completely proved.

\end{proof}

\end{document}